\documentclass[a4paper,fleqn]{cas-sc}

\usepackage[numbers]{natbib}
\DeclareMathOperator{\e}{e}
\def\NN{{\mathbb N}}

\def\R{{\mathbb R}}
\def\RR{{\mathbb R}}
\def\P{{\mathbb P}}

\def\d{\,\mathrm d}	
\def\x{\,\mathrm x}

\newcommand{\mrm}[1]{\mathrm{#1}}
\renewcommand{\div}{\mrm{div}}
\newcommand{\mL}{\mrm{L}}
\newcommand{\mH}{\mrm{H}}

\newcommand{\mD}{\mrm{D}}

\newcommand{\mW}{\mrm{W}}
\newcommand{\mK}{\mrm{K}}
\newcommand{\mC}{\mrm{C}}

\newtheorem{theorem}{Theorem}[section]
\newtheorem{lemma}{Lemma}[section]
\newtheorem{rmk}{Remark}[section]
\newtheorem{definition}{Definition}[section]
\newtheorem{corollary}{Corollary}[section]
\newtheorem{proposition}{Proposition}[section]

\newtheorem{Assumption and notation}{Assumption and notation}[section]
\newtheorem{General assumptions}{General assumptions}[section]

\newtheorem{Definition and notation}{Definition and notation}[section]
\newproof{pf}{proof}
\def\tsc#1{\csdef{#1}{\textsc{\lowercase{#1}}\xspace}}
\tsc{WGM}
\tsc{QE}
\tsc{EP}
\tsc{PMS}
\tsc{BEC}
\tsc{DE}
\begin{document}
\let\WriteBookmarks\relax
\def\floatpagepagefraction{1}
\def\textpagefraction{.001}
\shorttitle{Optimal control problem with state constraints
for cancer chemotherapy and treatment optimization}
\shortauthors{D. Lassounon et~al.}

\title [mode = title]{Optimal Control Problem with Mixed Control and State Constraints for Cancer Chemotherapy and Treatment Optimization}                    


\author[1,2]{David Lassounon}
[
 orcid=0000-0002-3011-4020
 ]
\cormark[1]
\ead{david.lassounon@uni-lorraine.fr}


\affiliation[1]{organization={Université de Lorraine, CNRS, CRAN, F-54000 Nancy, France}
            }

\affiliation[2]{organization={Univ Rennes, INSA, CNRS, IRMAR-UMR 6625, F-35000, France},
postcode={35708},
city={Rennes},
country={France}}

\author[2]{Aziz Belmiloudi}
\ead{aziz.belmiloudi@math.cnrs.fr}
\author[2]{Mounir Haddou}
\ead{mounir.haddou@insa-rennes.fr}

\cortext[cor1]{Corresponding author}


\begin{abstract}
The success of chemotherapy depends on the effectiveness of the drug delivery strategy and its ability to destroy cancer cells while minimizing damage to healthy tissues. The main objective of this work is to minimize the density of invasive tumour cells by controlling the chemotherapeutic agents. For this, we address an optimal control problem with mixed control and state constraints. The concentration of chemotherapeutic drugs is represented as a control variable. 
We use a nonlinear reaction-diffusion equation to describe the effect of drugs on the progression of invasive tumours. We start with the mathematical analysis of this initial boundary value problem. Then, we formulate the optimal control problem, explain the role of different constraints, and derive first-order necessary conditions of optimality. Finally, in order  to demonstrate the efficiency of the proposed strategy, numerical simulations in the case of the eradication  of malignant lung tumours, are presented and analysed.
\end{abstract}



\begin{keywords}
Optimal control problem\sep
State and control constraints\sep
Cancer chemotherapy\sep 
Nonlinear Boundary Condition\sep
First-order optimality conditions\sep 
Optimal therapeutic strategy\sep
Numerical simulations\sep
Lung tumours\sep
\end{keywords}
\maketitle
\section{Introductions and Problem Statement}
\subsection{Motivation}

Cancer is a disease characterized by uncontrolled cell growth, leading to the formation of tumors that can invade other parts of the body. Cancer cells can detach from the original tumor, spreading through blood and lymphatic vessels to form new tumors in a process known as metastasis. The World Health Organization (WHO) identifies cancer as one of the leading causes of death globally. Moreover, from  the International Agency for Research on Cancer
(IARC), lung cancer was the most frequently diagnosed cancer in 2022, responsible for almost 2.5 million new cases, or one in eight cancers world-wide ($12.4\%$ of all cancers globally), and  followed by cancers of the female breast ($11.6\%$) (see \cite
{BLS2024}). In particular, lung cancer is the third common cancer and the leading cause of cancer death in France (among men). Between 2010 and 2023 (52,777 new cases estimated in 2023 with 33,438 men and 19,339 women), while the incidence of this cancer decreased slightly in men (-0.5\% per year), it increased sharply in women (+4.3\% per year). Because of this, developing efficient, precise and personalized treatment strategies  for cancer patients is  a key challenge.\\
To better understand the dynamic and complex process of tumor proliferation, several mathematical models have been developed over the years to describe the mechanisms governing tumor evolution. For 
models displayed by ordinary differential equations we can cite e.g., \cite{VlaGo2004, EfB2021,  RHM2014, SKMa2023} and references therein, and for models using partial differential equations (called the proliferation-invasion models) see e.g., \cite{Bel2010, aB2017, bulai2025numerical, fife2013, FLB2019, HWBa2017, RHSwa2019, SMGh2019}, and references therein.  

Various therapeutic methods are available for cancer treatment, including surgery, chemotherapy, radiotherapy, and immunotherapy. However, these approaches often face clinical challenges. Tumour cells can develop resistance to treatments. Drugs may cause acute toxicity to normal cells, and secondary cancers can emerge following treatment. These issues are prevalent across all types of cancer.  In particular, in  chemotherapy, the treatment response models have been proposed (see, e.g. \cite{aB2017, CR2020, GLG2022, JaHB2018, santra2024modeling, WeMA2013,WMAr2015,bao2024mathematical,kumar2024impact}) to integrate clinical and experimental data to assess tumour response to treatment, thereby evaluating the efficacy of various therapeutic approaches. The key challenge in chemotherapy treatment aims to find an optimal balance between the effective destruction of cancer cells and the minimization of damage to healthy tissue.
Today, oncology experts increasingly apply optimal control theory, a powerful framework for addressing this challenge, to optimize and personalize therapeutic strategies based on chemotherapy  treatment response models. This approach allows clinicians to optimize dosing protocols and compare different treatment scenarios. Unfortunately, most of these studies neglect constraints on the state, even though accounting for them can significantly improve proposed treatment strategies. 
 Without taking these state constraints into account, the cancer chemotherapy treatment through optimal control process can be unpredictable, fluctuating between increases and decreases, even when control constraints are applied (see, e.g. the recent papers \cite{ABAM, LSB, LBH2, LHJ2021,musleh2025constrained}, and the references therein).
So, it is important to impose, in addition to the constraint on drug dosage, a constraint on tumour density  itself to be respected at each treatment time. This latter ensures that the total tumour burden does not exceed a critical threshold throughout the duration of treatment, and this is the context in which this work fits.

 This work presents an optimal treatment strategy using an optimal control framework with mixed control and state constraints, as well as a nonlinear partial differential equation describing the effect of chemotherapy on invasive tumours.
Our goal is to reduce the density of malignant tumour cells while minimizing damage to normal cells, thus proposing optimal treatment strategies. To achieve this, we control the concentration of chemotherapeutic agents, thereby limiting drug toxicity in the patient's body.
In our optimal control problem, the state constraint represents a temporal limitation on tumour density during treatment aimed at reducing any resistance and progression of tumour cells. These constraints are involved in managing acute drug toxicity in patients and controlling the evolution of tumour density throughout treatment. We, therefore, model the evolution of tumour invasion using the nonlinear partial differential equations in the following form
 \begin{equation}
  \begin{cases}
 \label{Eqstate}
\displaystyle{\frac{\partial \mathbf{u}}{\partial t}+\mathcal{A}(\mathbf{u})=f_1(t,\x,\mathbf{u},\varphi)\mbox{ in $\mathcal{Q}:=(0, T)\times\Omega$ },}\\
 \displaystyle{-\frac{\partial \mathbf{u}}{\partial \mathbf{n}_{\mathcal{A}}}:=-\mD(t,\x)\nabla{{\mathbf{u}}}\cdot\mathbf{n}=f_2(t,\x,\mathbf{u})\mbox{ on $\Sigma:=(0, T)\times\Gamma$ },}\\
\displaystyle{ \mathbf{u}(0,\x)=u_{0}(\x) \mbox{ in $\Omega,$ }}
\end{cases}
\end{equation}
under the pointwise constraints 
\begin{equation}\label{PC}
\mathbf{a}(\x)\leq \varphi(t,\x)\leq \mathbf{b}(\x) \quad a.e.\hspace{0.1cm} (t,\x)\in \cal{Q},
\end{equation}
where
\begin{gather}
\begin{split}
\label{termes sources principales}
 f_1(t,\x,\mathbf{u},\varphi)&=\mK_2(t,\x)\mathbf{u}(t,\x)-\mK_1(t,\x)\text{h}_{1}(\mathbf{u}(t,\x))-\alpha_{0}\varphi(t,\x)\mathbf{u}(t,\x) \mbox{ in $\Bar{\cal{Q}},$ }\\  
f_{2}(t,\x,\mathbf{u})&=\mK_{3}(t,\x)\text{h}_{2}(\mathbf{u}(t,\x)) \mbox{ on ${\Sigma,}$ }\\
 \mathcal{A}(\mathbf{u})&=-\div(\mD(t,\x)\nabla{\mathbf{u}}(t,\x)),
 \end{split}
\end{gather}
with (for $i=1,2$) 
\begin{equation}\label{hi}
 \text{h}_{i}:z\in\RR\longmapsto \text{h}_{i}(z)=
\begin{array}{|ll}
    z^{\rho_i}\quad\mbox{ if $\rho_{i}\in \NN\backslash\lbrace{1}\rbrace,$ }\\
     z|z|^{\rho_{i}-1}\mbox{ if $\rho_{i}\in \RR\setminus\NN, \rho_{i}>1,$ and $\rho_1 +1 \ge  2\rho_2.$ }\\
\end{array}
\end{equation}
Let us describe this system. The region $\Omega$  is an open bounded domain in $\R^{m}, m\ge 1$ representing the tumour region, with a smooth boundary $\Gamma=\partial \Omega,$ $T>0$ is finite and fixed time horizon (the final treatment time in days $(d)$), and $\mathbf{n}$ is the outward normal to $\Gamma$. The value $\mathbf{u}(t,\x)$ is the density of tumour cells at time $t$ and at position $\x$ in millimeter $(mm)$ of the region $\Omega.$ 
 The first two terms of the nonlinear operator $f_1$ represent the proliferation and multiplicity of tumour cells with an intrinsic growth rate and proliferation rate per day denoted respectively $\mK_{1}$ $(d^{-1}),$ and $\mK_2$ $(d^{-1})$, with a multiplicity coefficient $\rho_{1}.$ As the chemotherapy protocol is temporal and spatial, the control variable $\varphi(t,\x)$ in micromolar ($\mu$M) is the concentration of drugs at time $t$ and position $\x$, with a calibration coefficient $\alpha_{0}$ ($\mu$M$^{-1}\cdot d^{-1}$).\\
 The last term of the operator $f_1$ describes the mortality of tumour cells due to the effect of chemotherapy. Moreover, considering the malignancy of tumour cells, the nonlinear operator $f_2$ describes the invasion of tumour cells to other organs of the body beyond their sources, with an invasion rate denoted $\mK_3$ $(mm\cdot d^{-1})$ and a growth coefficient $\rho_2.$ This invasive nature of tumour cells towards other nearby organs is neglected in certain models (see, e.g., \cite{aB2017,JaHB2018, LHJ2021,WeMA2013, WMAr2015} and the references therein), where a homogeneous Neumann condition is often considered. We take this into account in the hope of obtaining a much more general model. The function $ u_{ 0}$ is the initial density of tumour cells at time $t=0$ and $\mD$ $(mm^{2}\cdot d^{-1})$ is the diffusivity function of tumour cells. The control bounds $\mathbf{a},\mathbf{b}$ are regular functions defined by clinicians based on the clinical conditions of the patient. The constraint \eqref{PC} imposed on the drug makes it possible to avoid any risk of acute toxicity in the body. For the parameters of the model we assume that
$\mu_1\ge\mD(t,\x)\ge\mu_0>0,$ and that $0<\underline{\mK_{i}}\leq\mK_{i}(t,\x)\leq\overline{\mK_{i}}$ for all $(t,\x)\in\Bar{\mathcal{Q}}$, with $\overline{\mK_i},\underline{\mK_i}\in\RR^{*}_{+}$, for $i=1,3$.\\

Our work yields the following contributions.
\begin{itemize}
\item \textbf{Tumour invasion and chemotherapy.} We consider a nonlinear reaction–diffusion equation describing tumour response under treatment. tumour invasion into other organs is described by a nonlinear boundary condition. A rigorously mathematical analysis of the equation is provided, including the global existence of solutions (see Theorem \ref{Théorème principal pour le bien posé}), the uniqueness and  stability results (see Theorem \ref{Théorème principal pour le bien posé 2}). The positivity and boundedness properties of the solution are derived in Proposition \ref{principe de maximum}.
\item \textbf{Optimal control problem with state and control constraints.} We propose and describe an optimal control problem with mixed control and state constraints to minimize tumour density while controlling the systemic toxicity of chemotherapeutic agents. A key feature of our approach is the explicit integration of a time-dependent state constraint on the tumour density. This constraint ensures that the tumour density remains below a prescribed threshold throughout the treatment period. The existence of an optimal solution to the control problem is established in Theorem \ref{existence solution of P}, together with a differentiability result for the solution mapping of the state equation (see Theorem \ref{proposition Gdiff}). First-order optimality conditions are then derived and characterized by an adjoint equation involving a Borel measure term (see Theorem \ref{theorem optimality}). Finally, the existence and uniqueness of the solution to this adjoint equation are proved in Proposition \ref{welpadjsys}.
\item \textbf{Numerical simulations.} 
We present numerical techniques for computing optimal treatment solutions. Numerical simulations are performed for the treatment of two cases of malignant lung tumours, one located in the left lobe and the other in the right lobe, near the main bronchus within a realistic lung geometry. The numerical results demonstrate the effectiveness of the proposed treatment strategy.
\end{itemize}

The paper is organized as follows. First, in Section \ref{setting1}, we present the general assumptions regarding the data and parameters of the system \eqref{Eqstate}. In Section \ref{setting2}, we discuss the well-posedness and stability results of this system. In Section \ref{setting3}, we formulate the optimal control problem with mixed constraints and prove the existence of optimal solutions. After stating and proving a differentiability result for the solution operator of the system \eqref{Eqstate}, we describe the necessary optimality conditions for the optimal control problem under certain conditions. In Section \ref{setting4}, we use a constraint penalization approach to numerically solve the control problem and present numerical simulations of solutions to eradicate two cases of infiltrative lung tumours using drugs. Finally, the conclusion and future work are discussed in Section \ref{setting5}.
\subsection{Definitions and General Assumptions}\label{setting1}
In our theoretical results, we  examine cases where the initial tumour density is either regular or not regular, and for this, we make the following assumptions:\\
\textit{ $\textbf{(H1)}:$ The initial tumour density $u_{0}$ is in $\mW^{r,q}(\Omega)$ with $r\in\{0,1\}$ and $1<q<\infty$ be fixed.  The tumour growth coefficients $\rho_i$ for $i=1,2$, witch depend on $r$ and $q$ are also fixed such that \\ 
$
(I_1): \begin{cases}
 \rm{ (i)}  \mbox{ if $r=0,$ then  $\rho_1\in]1;\frac{m+2q}{m}]$ and $\rho_2\in]1;\frac{m+q}{m}].$}\\
\rm{(ii)}  \mbox{ if $r=1$  and $q<m,$ then $\rho_1\in]1;\frac{m+q}{m-q}]$ and $\rho_2\in]1;\frac{m}{m-q}].$}\\ 
\rm{(iii)} \mbox{ if $r=1$ and $q\ge m,$ then $\rho_i\in]1;\infty[$ for $i=1,2.$}
\end{cases}
$
}
\\\\
\textit{ $\textbf{(H2)}:$  The bounds of the control $\mathbf{a},\mathbf{b}\in\mL^{\mathbf{p}_c}(\Omega)$ are given functions in $\Omega,$ such that $0\leq \mathbf{a}\leq\mathbf{b},$ and $\mathbf{p}_c$ satisfies
\\
 $
(I_2): \begin{cases} 
 \rm{(i)} \mbox{ for $r=0:$ }\\
 \mbox{ $\mathbf{p}_c\in]\frac{\rho_1}{\rho_1 -1};\frac{q\rho_1}{\rho_1 -1}]$ if $\rho_1\ge q,$ } and 
\mbox{ $\mathbf{p}_c\in[\frac{q}{\rho_1 -1};\frac{q\rho_1}{\rho_1 -1}]$ if $\rho_1< q.$ }
\\
\rm{ (ii)} \mbox{ for $r=1$ and $q<m:$ } \\
 \mbox{ $\mathbf{p}_c\in]\frac{mq}{(m-q)(\rho_1 -1)};\frac{q\rho_1}{\rho_1 -1}]$ if $\frac{m}{m-q}<\rho_1,$ } and  
\mbox{ $\mathbf{p}_c\in[\frac{\rho_1}{\rho_1 -1};\frac{q\rho_1}{\rho_1 -1}]$ if $\rho_1\leq\frac{m}{m-q}.$  }\\
\rm{ (iii)}  \mbox{ for $r=1$ and $q\ge m,$ $\mathbf{p}_c=\frac{q\rho_1}{\rho_1 -1}.$}
\end{cases}
$
}\\
We denote by $\mathcal{U}_{ad}$ the set of admissible controls defined by
\begin{gather*}
\mathcal{U}_{ad}:=\big\{\varphi\in \mL^{\mathbf{p}_{c}}({\cal Q}):\hspace{0.01cm} \mathbf{a}(\x)\leq \varphi(t,\x)\leq \mathbf{b}(\x),\hspace{0.01cm} a.e. \hspace{0.1cm} (t,x)\in{\cal Q}\big\}.
\end{gather*}
Although $\mathcal{U}_{ad}$ is non-empty and is a closed convex subset of the space $\mL^{\infty}(0, T;\mL^{\mathbf{p}_{c}}(\Omega))$, we prefer to use the standard norms of the space $\mL^{\mathbf{p}_{c}}(\mathcal{Q})$.\\
We denote by $\mathbf{B}$ the following beta-function
 $$\forall \alpha_{1},\alpha_{2}>0,\quad \mathbf{B}(\alpha_{1};\alpha_{2}):=\displaystyle \int^{1}_{0}(1-s)^{\alpha_{1}-1}s^{\alpha_{2}-1}\d s. $$
From now on, we assume that the assumptions \textit{\textbf{(H1)}} and \textit{\textbf{(H2)}} are satisfied. For the linear operator $\mathcal{A}$ defined in our model, we define its restriction denoted $A_q$ by
\begin{gather*}
A_{q}v:=\mathcal{A}v \quad \forall v\in\mathcal{E}^1_q(\Omega),
\end{gather*}
where $
 \mathcal{E}^1_q(\Omega):=\{v\in \mW^{2,q}(\Omega); \nabla{v}\cdot\mathbf{n}=0 \mbox{ on } \Gamma \}.$\\
The operator $A_q$ can be seen as an unbounded operator in $\mathcal{E}^0_q(\Omega):=\mL^{q}(\Omega).$ From \cite[Theo 1.3.4]{H1981}, $A_{q}$ is the generator of an analytic semigroup denoted $\{\e^{A_{q}t},t\geq 0\}$ on $\mathcal{E}^0_q(\Omega).$ Then, according to \cite[Sec.6]{Am1993}, the scale of fractional powers spaces and operators $(A_{\alpha,q};\mathcal{E}^{\alpha}_q(\Omega))_{\alpha\in \RR}$ generated by $A_q$ satisfy $ \mathcal{E}^{-{\alpha}}_{q^{*}}(\Omega)=:(\mathcal{E}^{\alpha}_q(\Omega))',$ $\mathcal{E}^{\frac{1}{2}}_q(\Omega)=\mW^{1,q}(\Omega)$ and the following continuous injections
 \begin{gather}
     \label{injections continues}
     \begin{cases}
     \mathcal{E}^{\alpha}_q(\Omega) \hookrightarrow \mH^{2\alpha}_q(\Omega),\quad \alpha\ge 0,\\
      \mathcal{E}^{\alpha}_q(\Omega) \hookrightarrow \mL^{\ell}(\Omega) \mbox{ for } \quad \ell\leq \frac{mq}{m-2\alpha q}, \quad  0\leq\alpha<\frac{m}{2q},\\
   \mL^s(\Omega)\hookrightarrow \mathcal{E}^{\alpha}_q(\Omega)\mbox{ for } \quad s\geq \frac{mq}{m-2\alpha q},\quad -\frac{m}{2q^{*}}<\alpha\leq 0,\quad q^{*}=\frac{q}{q-1},
   \end{cases}
 \end{gather}
 where $\mH^{\gamma}_q(\Omega), \gamma\in \mathbb{R}$ are the spaces of distribution $u$ such that $ \displaystyle  (I-\Delta)^{\frac{\gamma}{2}}u \in\mathcal{E}^{0}_{q}(\Omega)$ (in particular we have, $\forall\gamma\in\NN, \forall q\ge 1$ $\mH^{\gamma}_q (\Omega)=\mW^{\gamma,q}(\Omega),$  and $\forall \gamma\notin\NN,$ $\mH^{\gamma}_2 (\Omega)=\mW^{\gamma,2}(\Omega)=\mH^{\gamma}(\Omega)$). The operator  $A_{\alpha,q}: \mathcal{E}^{\alpha+1}_q(\Omega)\subset \mathcal{E}^{\alpha}_q(\Omega)\longrightarrow \mathcal{E}^{\alpha}_q(\Omega)$ is the restriction of $A_q$ on $\mathcal{E}^{\alpha}_{q}(\Omega)$ if $\alpha>0,$ and the continuous extension of $A_q$ in $\mathcal{E}^{\alpha}_{q}(\Omega)$ if $\alpha<0$. \\
 Similar, for the nonlinear operators in the model, we  define Banach spaces based on the regularity of the initial data $u_0\in\mW^{r,q}(\Omega)$ with  $r\in\{0,1\}$  as follows.
 \begin{itemize}
 \item For $r=0,$ we denote by $\mathcal{X}^{\beta}_{q}(\Omega):=\mathcal{E}^{\beta-1}_{q}(\Omega)$ and let  $A_{-1,q}: \mathcal{X}^{1}_{q}(\Omega)\subset\mathcal{X}^{0}_{q}(\Omega)\longrightarrow \mathcal{X}^{0}_{q}(\Omega).$ From \eqref{injections continues}, we have the following continuous injections
 \begin{gather}
 \label{injection xlq}
  \begin{cases}
      \mathcal{X}^{\beta}_q(\Omega) \hookrightarrow \mL^\ell(\Omega),\quad \mbox{ for } \ell\leq \frac{mq}{m+2q-2q\beta},\quad 1\leq\beta<1+\frac{m}{2q},\\
      \mL^s(\Omega)\hookrightarrow \mathcal{X}^{\beta}_q(\Omega) ,\quad \mbox{ for } s\geq \frac{mq}{m+2q-2q\beta},\quad 1-\frac{m}{2q^{*}}<\beta\leq 1,\\
      \mathcal{X}^{-{\beta}}_{q^{*}}(\Omega)=(\mathcal{X}^{\beta}_q(\Omega))'.
      \end{cases}
 \end{gather}
\item For $r=1,$ we denote by $\mathcal{X}^{\beta}_{q}(\Omega):=\mathcal{E}^{\beta-\frac{1}{2}}_{q}(\Omega)$ and let  $A_{-\frac{1}{2},q}: \mathcal{X}^{1}_{q}(\Omega)\subset\mathcal{X}^{0}_{q}(\Omega)\longrightarrow \mathcal{X}^{0}_{q}(\Omega).$ We have 
  \begin{gather}
  \label{injection wlq}
    \begin{cases}
      \mathcal{X}^{\beta}_q (\Omega)\hookrightarrow \mL^\ell(\Omega),\quad \mbox{ for } \ell\leq \frac{mq}{m+q-2\beta q},\quad \frac{1}{2}\leq\beta<\frac{1}{2}+\frac{m}{2q},\\
      \mL^s(\Omega)\hookrightarrow \mathcal{X}^{\beta}_q ,\quad \mbox{ for } s\geq \frac{mq}{m+q-2\beta q},\quad \frac{1}{2}-\frac{m}{2q^{*}}<\beta\leq \frac{1}{2},\\
      \mathcal{X}^{-{\beta}}_{q^{*}}(\Omega)=(\mathcal{X}^{\beta}_q (\Omega))'.
     \end{cases}
 \end{gather}
 \end{itemize}
 Note that in both cases, we have $\mW^{r,q}(\Omega)=\mathcal{X}^{1}_q(\Omega).$
From \cite[Theo 2.4, Prop 1.1]{A1987}, $A_{\alpha,q}$ generates an analytic semigroup $\{e^{A_{\alpha,q }t},t\geq 0\}$ in  $\mathcal{E}^{\alpha}_q(\Omega)$ ( which is the restriction of $ \{e^{A_q t},t\geq 0\}$ to $ \mathcal{E}^{\alpha}_q(\Omega)$ if $\alpha>0$ and the continuous extension of $\{e^{A_q t},t\geq 0\}$ over $ \mathcal{E}^{\alpha}_q(\Omega)$ if $\alpha<0$), and there is a positive constant $M(\alpha,\beta):=M$ such that 
 \begin{gather}
 \label{defM}
    \lVert \e^{A_{\alpha,q}t}\rVert_{\mathcal{L}(\mathcal{E}^{\alpha}_q(\Omega),\mathcal{E}^{\beta}_q(\Omega))}\leq Mt^{\alpha-\beta}, \quad  \alpha\leq \beta, \quad t>0.
\end{gather}
Noting by $\gamma_{\Gamma}$ the trace operator on $\mH^{r}_{q}(\Omega),$ then for $rq>1$ it is well known that (see, e.g.  \cite{H1981}) $\gamma_{\Gamma}$ is continuous from $\mH^{r}_{q}(\Omega)$ to $\mL^{\ell}(\Gamma)$ for all
\begin{gather}
\label{opérateur trace}
\begin{cases}
\ell\leq \infty,  \mbox{ if }  rq>m\\
\ell<\infty,      \mbox{ if } rq=m \\
\ell\leq \frac{(m-1)q}{m-rq},  \mbox{ if } rq< m. 
\end{cases}
\end{gather}
In the sequel, $C$ is a positive generic constant whose exact value is not important and may change from line to line. Now, we collect some known definitions of a $\epsilon-regular$ function and $\epsilon-regular$ mild solutions of \eqref{Eqstate} for some $\epsilon>0.$ For more details, we refer the reader to \cite{JAC1999,ACRB1999}.
\begin{definition}
\label{def epreg}
Let $\epsilon\geq 0,$ a map $f$ is $\epsilon-regular$ relative to $(\mathcal{X}^1_q(\Omega),\mathcal{X}^0_q(\Omega))$ if there exists a constant $\rho>1$, a real $\gamma\in[\rho\epsilon;1[,$ a constant $C>0,$ and a positive increasing function \(\nu\) defined on \((0, +\infty)\) with the property \(\lim_{t \to 0^+} \nu(t) = 0\) such that for all $u,v\in \mathcal{X}^{1+\epsilon}_q(\Omega)$ and $t>0,$
\begin{gather}
\begin{split}
\label{epregu}
 \lVert  f(t,\x,u)-  &f(t,\x,v)\lVert _{\mathcal{X}^{\gamma}_q(\Omega)} \leq C\lVert u-v\lVert  _{\mathcal{X}^{1+\epsilon}_q(\Omega)} \big(\lVert   u\lVert^{\rho-1}_{\mathcal{X}^{1+\epsilon}_q(\Omega)}+\lVert  v\lVert  ^{\rho-1}_{\mathcal{X}^{1+\epsilon}_q(\Omega)}+\nu(t)t^{\epsilon-\gamma}\big)\\
 & \lVert f(t,\x,u)\lVert _{\mathcal{X}^{\gamma}_q(\Omega)} \leq C\big(\lVert u\lVert^{\rho}_{\mathcal{X}^{1+\epsilon}_q(\Omega)}+\nu(t)t^{-\gamma}\big).
\end{split}
\end{gather}
\end{definition}
\begin{definition}
\label{epregsol}
We say that a function $u: [0;\tau]\longrightarrow\mW^{r,q}(\Omega)$ ($r\in\{0,1\}$) is an $\epsilon-regular$ mild solution  ($\epsilon-solution$ for short) for some $\tau>0$ to the problem \eqref{Eqstate},  if $u\in \mC([0,\tau];\mW^{r,q}(\Omega))\cap \mC((0,\tau];\mathcal{X}^{1+\epsilon}_q(\Omega)),$ and satisfying
\begin{gather}
\label{mild form}
u(t,\x)=\e^{\tilde{A}t}u_{0}(\x)+\int^{t}_{0}\e^{\tilde{A}(t-s)}\Big(f_{1}(s,\x,u,\varphi)+\tilde{f}_{2}(s,\x,u)\Big)\d s,
\end{gather}
where $\tilde{A}$ denotes either $A_{-1,q}$ or $A_{-\frac{1}{2},q},$ and $\tilde{f}_{2}$ is the trace lift of the nonlinear operator $f_2$ in $\Bar{\mathcal{Q}}.$ More precisely, $f_2$ is the restriction of $\tilde{f}_2$ on $\Sigma.$
\end{definition}
\begin{definition}
We say that a function $u: [0;\tau]\longrightarrow\mW^{r,q}(\Omega)$ $(r\in\{0,1\})$ is a classical solution of \eqref{Eqstate}
on $[0; \tau],$ if $u\in \mC^{1}((0,\tau];\mathcal{X}^{\epsilon}_q(\Omega))\cap\mC([0,\tau];\mW^{r,q}(\Omega))\cap \mC ((0,\tau];\mathcal{X}^{1+\epsilon}_q(\Omega))$ and satisfying 
\begin{gather*}
\begin{cases}
\partial_t u(t,\x)+\tilde{A}u(t,\x)=f_{1}(t,\x,\varphi,u)+\tilde{f}_{2}(t,\x,u),\\
u(0,\cdot)=u_{0},\quad 0< t\leq \tau.
\end{cases}
\end{gather*}
\end{definition}
 \section{Well-Posedness and Stability}
 \label{setting2}
In this section, we present some existence, uniqueness, and stability results for the problem defined in \eqref{Eqstate}. We suppose that the assumptions \textit{(\textbf{H1})} and \textit{(\textbf{H2})} are satisfied.
\begin{theorem}\label{Théorème principal pour le bien posé}(Existence)\\
For all $u_0\in\mW^{r,q}(\Omega)$ $(r\in\{0,1\}, q>1),$ and $\varphi\in\mathcal{U}_{ad},$ there exists $\underline{\gamma}>0,$ $\overline{\epsilon}\in]0;\underline{\gamma}[,$ a time $\tau_0>0$ and a unique $\overline{\epsilon}-regular$ mild solution $\mathbf{u}$ on $[0;\tau_0]$ to the problem \eqref{Eqstate}. Moreover, if $\rho_1 +1>2\rho_2$ or $\rho_1 +1 =2\rho_2$ with $4\nu_0(q-1)\inf_{\mathcal{Q}}{\mK_{1}}\ge c_{\Omega}(\inf_{\Sigma}{\mK_{3}})^{2}(\rho_2 +q-1)^{2}$, then $\mathbf{u}$ is the classical solution globally defined in all time $t\in (0;  T)$ for the problem \eqref{Eqstate}.
\end{theorem}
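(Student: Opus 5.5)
The plan has two parts: local existence from the abstract theory of $\epsilon$-regular maps of Arrieta--Carvalho \cite{JAC1999,ACRB1999}, and then global existence plus classical regularity from an a priori $\mL^{q}$-type estimate that rules out blow-up.

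\textbf{Step 1: the nonlinearity is $\epsilon$-regular.} I recast \eqref{Eqstate} as the abstract problem \eqref{mild form} in the scale $\mathcal{X}^{\beta}_q(\Omega)$, with the nonlinearity $F(t,u)=f_1(t,\cdot,u,\varphi)+\tilde f_2(t,\cdot,u)$. The boundary term is treated as a distribution in $\mathcal{X}^{\gamma}_q(\Omega)$ for some $\gamma<0$ (case $r=0$), or for a suitable $\gamma<1/2$ (case $r=1$), via duality
\[
\langle \tilde f_2(u),\psi\rangle=\int_\Gamma \mK_3\,\text{h}_2(u)\,\gamma_\Gamma\psi .
\]
I then check the estimates \eqref{epregu} term by term.
\begin{itemize}
\item \emph{Polynomial term.} For $\mK_1\text{h}_1(u)$ I use $|\text{h}_1(u)-\text{h}_1(v)|\le C|u-v|(|u|^{\rho_1-1}+|v|^{\rho_1-1})$, Hölder's inequality, and the embeddings \eqref{injection xlq}--\eqref{injection wlq}. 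This works with $\rho=\rho_1$, and the admissible exponent ranges in $(I_1)$ are exactly what make the Sobolev indices close for some small $\epsilon$ and $\gamma\in[\rho\epsilon,1)$.
\item \emph{Boundary term.} For $\mK_3\text{h}_2(u)$ the same argument applies, combined with the trace bound \eqref{opérateur trace} applied to $\psi\in\mathcal{X}^{-\gamma}_{q^*}$, with $\rho=\rho_2$.
\item \emph{Linear terms.} $\mK_2u$ is trivially regular. For the control term $\alpha_0\varphi u$, since $\varphi\in\mL^{\mathbf{p}_c}$ is merely integrable, I use Hölder's inequality with $\mathbf{p}_c$ as prescribed in $(I_2)$; this is linear in $u$ with a time-independent bound.
\end{itemize}
Taking $\rho=\max(\rho_1,\rho_2)$, the remaining lower-order contributions are absorbed using $\nu(t)t^{\epsilon-\gamma}$.

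\textbf{Step 2: local existence and uniqueness.} Once $\epsilon$-regularity holds, I invoke the local existence and uniqueness theorem of \cite{JAC1999}: there are $\underline\gamma$, $\overline\epsilon\in]0,\underline\gamma[$ and $\tau_0>0$ with a unique $\overline\epsilon$-regular mild solution. This comes from a contraction argument in the space
\[
\{u:\ \sup_{t} t^{\epsilon}\|u(t)\|_{\mathcal{X}^{1+\epsilon}}\le \delta\},
\]
using \eqref{defM} and the Beta function $\mathbf{B}$ to bound the convolution integrals. Classical regularity on $(0,\tau_0]$ then follows by the standard bootstrap for analytic semigroups, because $F$ is locally Hölder in $t$ and Lipschitz in $u$ in the relevant spaces.

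\textbf{Step 3: global existence, the main obstacle.} By the continuation property of $\epsilon$-solutions, the solution extends as long as $\|u(t)\|_{\mW^{r,q}}$ (or an $\mL^{q}$ norm at a subcritical level) stays bounded, so global existence reduces to an a priori estimate on $(0,T)$. I would multiply the equation by $|u|^{q-2}u$ and integrate by parts. The term $-\mK_1\text{h}_1(u)$ gives the good contribution $-\underline{\mK_1}\int_\Omega|u|^{\rho_1+q-1}$. The diffusion gives $-\mu_0\tfrac{4(q-1)}{q^2}\int|\nabla |u|^{q/2}|^2$. The control term has a sign, since $\varphi\ge0$. The bad term is the boundary integral $\int_\Gamma \mK_3|u|^{\rho_2+q-1}$. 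Setting $w=|u|^{(\rho_2+q-1)/2}$, I would bound it by a trace inequality of the form
\[
\int_\Gamma w^2\le c_\Omega\|w\|_{\mL^2(\Omega)}\|w\|_{\mH^1(\Omega)},
\]
followed by Young's inequality, so that it is absorbed by the gradient term together with $\int|u|^{\rho_1+q-1}$.
\begin{itemize}
\item \emph{Strict case $\rho_1+1>2\rho_2$.} The exponent count makes the boundary term subordinate, so absorption works with arbitrary constants.
\item \emph{Critical case $\rho_1+1=2\rho_2$.} The powers match exactly, and absorption requires the stated smallness condition $4\nu_0(q-1)\inf\mK_1\ge c_\Omega(\inf\mK_3)^2(\rho_2+q-1)^2$. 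This explains that constant: the $(\rho_2+q-1)^2$ factor comes from the chain rule in $w$, and the factor $4(q-1)$ from the diffusion term.
\end{itemize}
Gronwall's inequality then bounds $\|u(t)\|_{\mL^q}$ on $[0,T]$, and a second pass through the mild formulation lifts this to a bound in $\mathcal{X}^{1+\epsilon}$, which prevents blow-up. Getting the constants in the critical absorption exactly right is where I expect the difficulty.
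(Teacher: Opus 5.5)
Your architecture matches the paper's: an $\epsilon$-regularity lemma for $f_1$ and $\tilde f_2$ as in Lemma \ref{epsgf1f2}, then \cite[Theo 2.2]{ACRB1999}, then an a priori $\mL^q$ estimate along the lines of \cite[Sec. 8]{RodB2002}, which the paper only sketches. However, two of your steps fail as written.

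\textbf{The absorption in Step 3.} Take $w=|u|^{(\rho_2+q-1)/2}$ and apply the norm-level bound $\int_\Gamma w^2\le c_\Omega\|w\|_{\mL^2}\|w\|_{\mH^1}$ followed by Young. This leaves $\|\nabla w\|_{\mL^2}^2=\big(\frac{\rho_2+q-1}{2}\big)^2\int_\Omega|u|^{\rho_2+q-3}|\nabla u|^2$. That term carries an extra factor $|u|^{\rho_2-1}$ compared with the dissipation $(q-1)\mu_0\int_\Omega|u|^{q-2}|\nabla u|^2$, so it cannot be absorbed. Moreover, the powers your route compares, $\rho_2+q-1$ against $\rho_1+q-1$, can never produce the threshold $\rho_1+1=2\rho_2$. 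You have to split pointwise instead. Apply the $\mW^{1,1}$ trace inequality to $|u|^{\rho_2+q-1}$ and write $|u|^{\rho_2+q-2}|\nabla u|=\big(|u|^{\frac{q-2}{2}}|\nabla u|\big)\,|u|^{\rho_2+\frac{q}{2}-1}$. This gives
\[
\int_\Gamma|u|^{\rho_2+q-1}\le c\int_\Omega|u|^{\rho_2+q-1}+\delta\int_\Omega|u|^{q-2}|\nabla u|^2+\frac{c^2(\rho_2+q-1)^2}{4\delta}\int_\Omega|u|^{2\rho_2+q-2}.
\]
Now $2\rho_2+q-2\le\rho_1+q-1$ is exactly $\rho_1+1\ge 2\rho_2$. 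In the equality case, choosing $\overline{\mK_3}\,\delta=(q-1)\mu_0$ yields a condition of the form $4\mu_0(q-1)\underline{\mK_1}\ge c_\Omega\,\overline{\mK_3}^{\,2}(\rho_2+q-1)^2$. Note that this involves the supremum of $\mK_3$ where the statement has the infimum.

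There are two smaller problems in the same step:
\begin{itemize}
\item The good sign of $\mK_1\mathrm{h}_1(u)|u|^{q-2}u$ needs $\mathrm{h}_1$ odd or $u\ge 0$; it fails on $\{u<0\}$ for even integer $\rho_1$.
\item An $\mL^q$ bound does not by itself give continuation when $r=1$ (where $\mathcal{X}^1_q(\Omega)=\mW^{1,q}(\Omega)$), or at the critical endpoints of $(I_1)$. Your ``second pass through the mild formulation'' must therefore start from the same estimate at a larger exponent $\tilde q$ where the problem is subcritical. When $\rho_1+1=2\rho_2$, the smallness condition depends on $\tilde q$, since its right side grows quadratically in $\tilde q$ and its left side only linearly.
\end{itemize}
The paper's sketch also stops at the $\mL^q$ bound, so it does not supply this step either.

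\textbf{Classical regularity.} Your justification is wrong. $F(t,u)$ contains $-\alpha_0\varphi(t,\cdot)u$, and $\varphi\in\mathcal{U}_{ad}$ is only measurable in $t$; no time regularity is assumed on $\mK_i$ or $\mD$ either. So $F$ is not H\"older in $t$, and this is not a technicality. Take $\varphi=\mathbf{a}+(\mathbf{b}-\mathbf{a})\mathbf{1}_{\{t>t_0\}}\in\mathcal{U}_{ad}$. Then $t\mapsto F(t,u(t))$ jumps at $t_0$ by $-\alpha_0(\mathbf{b}-\mathbf{a})u(t_0)$, while $\tilde A u(t)$ is continuous. Hence $\partial_t u$ cannot be continuous at $t_0$, and in general $u\notin\mC^1((0,\tau_0];\mathcal{X}^{\epsilon}_q(\Omega))$. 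The paper obtains ``classical'' only by citing \cite[Theo 2.2]{ACRB1999}, which treats time-independent nonlinearities and a fixed generator, so neither argument covers this. You must either assume H\"older continuity in $t$ of $\varphi$, $\mK_i$ and $\mD$, or conclude only with a mild solution.
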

The proof of this theorem can be found in Appendix \ref{setting61}.\\
The following theorem concerns the uniqueness and stability results for the system \eqref{Eqstate} as follows:
\begin{theorem}\label{Théorème principal pour le bien posé 2} (Uniqueness and stability)\\
Let $\mathbf{u}_1,$ resp. $\mathbf{u}_2$ be the classical solution of \eqref{Eqstate} with respect to $(u_{0,1},\varphi_{1}),$ resp. $(u_{0,2},\varphi_{2})$ belonging to  $\mW^{r,q}(\Omega)\times\mathcal{U}_{ad}$ $(r\in\{0,1\}, q>1).$ Then, there exists $C>0$ such that for all $t\in(0; T),$ $\theta\in[0;\underline{\gamma}],$ $\forall 0\leq\epsilon\leq\theta$ the following relations hold
\begin{gather}
\begin{split}
 \label{stabilité dt}
  \lVert \mathbf{u}_{1}(t,\cdot) -\mathbf{u}_{2}(t,\cdot) \lVert_{\mathcal{X}^{1+\theta}_q(\Omega)}\leq Ct^{-\underline{\gamma}}\Big(\lVert u_{0,1}-u_{0,2}\lVert_{\mW^{r,q}(\Omega)}+\lVert \varphi_{1} -\varphi_{2} \Vert_{\mathcal{U}_{ad}}\Big),
  \\
   \lVert \frac{\partial\mathbf{u}_{1}}{\partial t}(t,\cdot)-\frac{\partial \mathbf{u}_2}{\partial t}(t,\cdot)\lVert_{\mathcal{X}^{\theta-\epsilon}_q(\Omega)}\leq Ct^{-\underline{\gamma}+\epsilon}\Big(\lVert u_{0,1}-u_{0,2}\lVert_{\mW^{r,q}(\Omega)}+\lVert \varphi_{1} -\varphi_{2} \Vert_{\mathcal{U}_{ad}}\Big). 
   \end{split}
\end{gather}
\end{theorem}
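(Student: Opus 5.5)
We work with the mild formulation \eqref{mild form} of each classical solution and estimate the difference $w:=\mathbf{u}_1-\mathbf{u}_2$ through a singular Gronwall (Henry-type) inequality. Subtracting the two mild representations gives
\begin{gather*}
w(t)=\e^{\tilde{A}t}(u_{0,1}-u_{0,2})+\int_0^t \e^{\tilde{A}(t-s)}\Big(f_1(s,\cdot,\mathbf{u}_1,\varphi_1)-f_1(s,\cdot,\mathbf{u}_2,\varphi_2)+\tilde f_2(s,\cdot,\mathbf{u}_1)-\tilde f_2(s,\cdot,\mathbf{u}_2)\Big)\d s .
\end{gather*}
We split the $f_1$-difference as follows:
\begin{gather*}
\big[f_1(\mathbf{u}_1,\varphi_1)-f_1(\mathbf{u}_2,\varphi_1)\big]-\alpha_0(\varphi_1-\varphi_2)\mathbf{u}_2 .
\end{gather*}
The first bracket, together with the $\tilde f_2$-difference, is handled by the $\overline{\epsilon}$-regularity estimate \eqref{epregu}. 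That these maps are $\overline{\epsilon}$-regular was verified in the proof of Theorem \ref{Théorème principal pour le bien posé}, using (H1), (H2), the injections \eqref{injection xlq}--\eqref{injection wlq} and the trace bound \eqref{opérateur trace}. The control term is bounded in $\mathcal{X}^{\gamma}_q(\Omega)$ by Hölder's inequality and the embedding $\mL^s\hookrightarrow\mathcal{X}^\gamma_q$. The exponent $\mathbf{p}_c$ in (I$_2$) is chosen exactly so that this works, giving
\begin{gather*}
\lVert(\varphi_1-\varphi_2)\mathbf{u}_2\rVert_{\mathcal{X}^{\gamma}_q}\le C\lVert\varphi_1-\varphi_2\rVert_{\mL^{\mathbf{p}_c}}\lVert \mathbf{u}_2\rVert_{\mathcal{X}^{1+\overline\epsilon}_q}.
\end{gather*}

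The key a priori ingredient is a uniform bound
\begin{gather*}
\sup_{0<t<T}t^{\underline{\gamma}}\lVert\mathbf{u}_i(t)\rVert_{\mathcal{X}^{1+\overline\epsilon}_q}\le K,
\end{gather*}
with $K$ depending only on $\lVert u_{0,i}\rVert_{\mW^{r,q}}$ and on $\mathbf{b}$. This follows from the global existence part of Theorem \ref{Théorème principal pour le bien posé}, since the energy estimate there is uniform over $\mathcal{U}_{ad}$ because $0\le\varphi\le\mathbf{b}$.

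Applying \eqref{defM} with $\beta=1+\theta$, we bound the free term by $Mt^{-\theta}\lVert u_{0,1}-u_{0,2}\rVert_{\mW^{r,q}}$ and the integral by
\begin{gather*}
\int_0^t M(t-s)^{\gamma-1-\theta}\Big(C(K s^{-\underline\gamma})^{\rho-1}\lVert w(s)\rVert_{\mathcal{X}^{1+\overline\epsilon}_q}+CKs^{-\underline\gamma}\lVert\varphi_1-\varphi_2\rVert\Big)\d s .
\end{gather*}
We first take $\theta=\overline\epsilon$ and set $\psi(t)=t^{\underline\gamma}\lVert w(t)\rVert_{\mathcal{X}^{1+\overline\epsilon}_q}$. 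The integrals produce Beta functions $\mathbf{B}(\gamma-\theta;1-\rho\underline\gamma)$, which are finite because $\gamma\ge\rho\overline\epsilon$ and $\theta\le\underline\gamma<\gamma$. The singular Gronwall lemma (Henry, Lemma 7.1.1) then yields $\psi(t)\le C(\lVert u_{0,1}-u_{0,2}\rVert+\lVert\varphi_1-\varphi_2\rVert)$ on $(0,T)$. Feeding this back into the representation with general $\theta\in[0,\underline\gamma]$ gives the first estimate in \eqref{stabilité dt}.

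For the time derivative, we differentiate the mild formula, or equivalently use $\partial_t w=-\tilde A w+F(t)$, where $F$ is the nonlinear difference. We estimate $\tilde A w$ in $\mathcal{X}^{\theta-\epsilon}_q$ by $\lVert w\rVert_{\mathcal{X}^{1+\theta-\epsilon}_q}$, which we control by the first estimate gaining the factor $t^{\epsilon}$ through the smoothing \eqref{defM}. Concretely, we write $w(t)=\e^{\tilde A (t/2)}w(t/2)+\int_{t/2}^t\cdots$ and use analyticity, $\lVert\tilde A\e^{\tilde A\tau}\rVert\le C\tau^{-1}$, together with the Hölder continuity in time of $F$ inherited from the regularity of classical solutions. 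We estimate $F$ directly in $\mathcal{X}^{\gamma}_q\hookrightarrow\mathcal{X}^{\theta-\epsilon}_q$. The main obstacle is this second estimate: it requires Hölder-in-time control of the nonlinearity difference and careful bookkeeping of the exponents so that the singularity is exactly $t^{-\underline\gamma+\epsilon}$. Our first attempt would be to prove a Hölder estimate for $w$ in $\mathcal{X}^{1+\overline\epsilon}_q$ via the standard splitting $\e^{\tilde A(t+h-s)}-\e^{\tilde A(t-s)}$, and then run the classical Pazy/Henry regularity argument on the difference equation.
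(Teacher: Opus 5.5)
Your architecture matches the paper's: subtract the mild formulas, split off $-\alpha_0(\varphi_1-\varphi_2)\mathbf{u}_2$, run a singular Gronwall at $\theta=\overline\epsilon$, feed back, then read $\partial_t w$ off the equation. However, the Gronwall step fails as written because of the weight you chose.

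You use $\sup_t t^{\underline\gamma}\lVert\mathbf{u}_i(t)\rVert_{\mathcal{X}^{1+\overline\epsilon}_q}\le K$ and $\psi(t)=t^{\underline\gamma}\lVert w(t)\rVert_{\mathcal{X}^{1+\overline\epsilon}_q}$, so the integrand carries $s^{-\rho\underline\gamma}$. Then $\mathbf{B}(\gamma-\theta;1-\rho\underline\gamma)<\infty$ requires $\rho\underline\gamma<1$, which (H1)--(H2) do not give. In case (iii) with $q=m$ one has $\underline\gamma=\frac12$ while $\rho_1>1$ is arbitrary. The reason you cite, $\gamma\ge\rho\overline\epsilon$, concerns $\overline\epsilon$, not $\underline\gamma$.

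Even when $\rho\underline\gamma<1$, take the nonlinearity with $\gamma=\underline\gamma$. Its kernel gives $t^{\underline\gamma}\int_0^t(t-s)^{\gamma-1-\overline\epsilon}s^{-\rho\underline\gamma}\d s=c\,t^{(2-\rho)\underline\gamma-\overline\epsilon}$. This is unbounded as $t\to 0$ once $\rho\ge 2$, so the singular Gronwall lemma does not apply.

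The input that matches Definition~\ref{def epreg} is the weighted bound carried by $\epsilon$-regular solutions, $\lVert\mathbf{u}_i(s)\rVert_{\mathcal{X}^{1+\epsilon}_q}\le Cs^{-\epsilon}$; the paper feeds $s^{-\overline\epsilon}$ into \eqref{dif1}. Take $\psi(t)=t^{\epsilon}\lVert w(t)\rVert_{\mathcal{X}^{1+\epsilon}_q}$. The singular factor becomes $s^{-\rho\epsilon}$, and the requirement $\rho\epsilon\le\gamma<1$ is exactly what makes the Beta integrals finite and the kernel integral $t^{\gamma-\rho\epsilon}$ bounded.

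Two further points.

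\textbf{The endpoint $\theta=\underline\gamma$.} Your claim $\theta\le\underline\gamma<\gamma$ fails for the nonlinearity whose $\gamma_i$ realizes $\underline\gamma=\min(\gamma_1,\gamma_2)$; this is $\tilde f_2$ in Lemma~\ref{epsgf1f2}. At $\theta=\underline\gamma$ its kernel is $(t-s)^{-1}$, which is not integrable, so feeding back only reaches $\theta<\underline\gamma$. The paper treats $\theta=\underline\gamma$ separately, restarting the mild formula at $\tau=t/2$ to get \eqref{ane1}. A rigorous version of that step is precisely where time-H\"older control of the nonlinearity difference is needed, which is the tool you reserved for the derivative.

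\textbf{The time derivative.} For $\partial_t w$ the paper uses no H\"older-in-time machinery. From \eqref{gla} it bounds the linear part by $\lVert w(t)\rVert_{\mathcal{X}^{1+\theta-\epsilon}_q}$. It then applies the sharp form \eqref{sas}, $\lVert w(t)\rVert_{\mathcal{X}^{1+\beta}_q}\le Ct^{-\beta}(\cdots)$, with $\beta=\theta-\epsilon$, so that $t^{-\theta+\epsilon}\le T^{\underline\gamma-\theta}t^{-\underline\gamma+\epsilon}$. The nonlinearity difference is bounded in $\mathcal{X}^{\underline\gamma}_q\hookrightarrow\mathcal{X}^{\theta-\epsilon}_q$ as in \eqref{dif8}.

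The factor $t^{\epsilon}$ you try to recover by smoothing is lost only because you kept the first estimate in the coarse form $t^{-\underline\gamma}$ instead of $t^{-\theta}$. With your $t^{\underline\gamma}$ weight, the feed-back step would not even produce $t^{-\theta}$, since it generates the more singular term $t^{\gamma-\theta-\rho\underline\gamma}$.
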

The proof of this result is included in Appendix \ref{preuve bien pose2}.\\
Since the solution \(\mathbf{u}\) of the problem \eqref{Eqstate} represents tumour density, the following proposition establishes its positivity and boundedness; the proof can be found in Appendix \ref{preuve principe maximum}.
\begin{proposition}\label{principe de maximum}(Maximum principle)\\
For $u_0 \in\mW^{r,q}(\Omega)$ ($r\in\{0,1\}$, $q>1$) such that $0\leq u_{0}\leq c_{s}$ and $\varphi\in\mathcal{U}_{ad}$, the solution $\mathbf{u}$ of \eqref{Eqstate} is bounded in $\mathcal{Q}$ and verifies
\begin{gather}
\label{borneu}
0\leq \mathbf{u}(t,\cdot)\leq c_{s}, \mbox{ for all $t\in (0;T)$ and $a.e.$ in } \Omega,
\end{gather}
where $c_{s}=\bigg{(}\frac{\overline{\mK_{2}}}{{\underline{\mK_{1}}}}\bigg{)}^{\frac{1}{\rho_{1}-1}}$. \end{proposition}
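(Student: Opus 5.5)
The plan is to use Stampacchia truncation, testing the weak form of \eqref{Eqstate} with the positive and negative parts of $\mathbf{u}$. For a test function $v$, the weak form reads
\[
\frac{\d}{\d t}\int_\Omega \mathbf{u}\,v+\int_\Omega \mD\nabla\mathbf{u}\cdot\nabla v+\int_\Gamma \mK_3\,\text{h}_2(\mathbf{u})\,v=\int_\Omega f_1(t,\x,\mathbf{u},\varphi)\,v .
\]
The classical solution of Theorem \ref{Théorème principal pour le bien posé} is regular enough on $(0,T]$ for this identity (and for the chain rule $\int_\Omega \partial_t\mathbf{u}\,(\mathbf{u}-c)^+=\frac12\frac{\d}{\d t}\|(\mathbf{u}-c)^+\|^2_{\mL^2(\Omega)}$) to hold on every $[\delta,T]$. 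Letting $\delta\to0$ uses only $\mathbf{u}\in\mC([0,T];\mW^{r,q}(\Omega))$. When $r=0$ and $q<2$, I would replace $\frac12(\cdot)^2$ by a smooth convex function of $(\mathbf{u}-c)^+$ of $\mL^q$-growth, so that continuity at $t=0$ in $\mL^q$ suffices.

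\emph{Upper bound.} I test with $v=(\mathbf{u}-c_s)^+\ge0$. On $\{\mathbf{u}>c_s\}$ we have $\mathbf{u}>0$, so $\text{h}_2(\mathbf{u})>0$ for both definitions in \eqref{hi}. The boundary integral is therefore nonnegative and can be dropped. The diffusion term equals $\int_\Omega \mD|\nabla(\mathbf{u}-c_s)^+|^2\ge0$. On the same set,
\[
f_1=\mathbf{u}\big(\mK_2-\mK_1\mathbf{u}^{\rho_1-1}-\alpha_0\varphi\big)\le \mathbf{u}\big(\overline{\mK_2}-\underline{\mK_1}c_s^{\rho_1-1}\big)\le0,
\]
using $\varphi\ge\mathbf{a}\ge0$ and the definition of $c_s$. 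Hence $\frac{\d}{\d t}\|(\mathbf{u}-c_s)^+\|^2\le0$, and since $(u_0-c_s)^+=0$ we get $\mathbf{u}\le c_s$.

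\emph{Lower bound.} Testing directly with $-\mathbf{u}^-$ handles the interior terms. The reaction part gives $\int_\Omega(\mK_2-\alpha_0\varphi)(\mathbf{u}^-)^2\le\overline{\mK_2}\|\mathbf{u}^-\|^2$, because $\varphi\ge0$; this matters since $\varphi$ is only in $\mL^{\mathbf{p}_c}$. The term $\mK_1\text{h}_1$ has a favourable sign when $\text{h}_1$ is odd.

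I expect the main obstacle to be the parity of integer exponents. When $\rho_1$ or $\rho_2$ is an even integer, $\text{h}_i(\mathbf{u})$ has the wrong sign on $\{\mathbf{u}<0\}$, and the boundary term is not controlled by a sign argument. To avoid trace estimates, I would first solve the truncated problem in which $\text{h}_i(\mathbf{u})$ is replaced by $\text{h}_i(\mathbf{u}^+)$, and $\mathbf{u}$ by $\min(\mathbf{u}^+,c_s)$ inside the nonlinear terms. These nonlinearities are globally Lipschitz and still $\epsilon$-regular, so Theorem \ref{Théorème principal pour le bien posé} (in fact a simpler version) gives a global solution $\tilde{\mathbf{u}}$. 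For $\tilde{\mathbf{u}}$ the boundary and $\mK_1$ terms vanish on $\{\tilde{\mathbf{u}}<0\}$, and Gronwall gives $\|\tilde{\mathbf{u}}^-(t)\|^2\le \e^{2\overline{\mK_2}t}\|u_0^-\|^2=0$. The upper-bound argument above applies verbatim to $\tilde{\mathbf{u}}$. Consequently $0\le\tilde{\mathbf{u}}\le c_s$, so the truncations are inactive and $\tilde{\mathbf{u}}$ solves \eqref{Eqstate}. By the uniqueness of Theorem \ref{Théorème principal pour le bien posé 2}, $\tilde{\mathbf{u}}=\mathbf{u}$, which yields \eqref{borneu} and in particular the boundedness of $\mathbf{u}$ in $\mathcal{Q}$.
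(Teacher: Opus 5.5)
Your argument is correct and rests on the same mechanism as the paper's proof: test the weak form with the negative part of $\mathbf{u}$ and with $(\mathbf{u}-c_s)^+$, then apply Gronwall. It differs in three places.

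\textbf{Choice of test functions.} The paper uses the $\mL^q$-type test functions $\mathbf{u}^-|\mathbf{u}^-|^{q-2}$ and $(\mathbf{u}-c_s)^+|(\mathbf{u}-c_s)^+|^{q-2}$. These fit the $\mW^{r,q}$ setting directly, so your modified convex function near $t=0$ is not needed. With your $\mL^2$ version that device is needed whenever $\mW^{r,q}(\Omega)\not\hookrightarrow\mL^2(\Omega)$, which can also happen for $r=1$, $m\ge 3$ and $q$ close to $1$.

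\textbf{Upper bound.} The paper splits $\mathbf{u}=(\mathbf{u}-c_s)+c_s$ on $\{\mathbf{u}>c_s\}$ and closes with Gronwall. Your observation that $f_1\le 0$ there gives monotonicity directly.

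\textbf{Lower bound (the substantive difference).} The paper proves positivity only for non-integer $\rho_i$ and states that the integer case is the same. For even $\rho_i$ this is not true; in particular $\rho_2=2$ is the case of Section 3 and of the simulations. There $\text{h}_i(\mathbf{u})=|\mathbf{u}|^{\rho_i}\ge 0$ on $\{\mathbf{u}<0\}$. So after testing with $-\mathbf{u}^-|\mathbf{u}^-|^{q-2}$, the terms $\int_\Omega \mK_1|\mathbf{u}^-|^{\rho_1+q-1}$ and $\int_\Gamma \mK_3|\mathbf{u}^-|^{\rho_2+q-1}$ land on the right-hand side as sources, not as nonnegative terms on the left. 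Your route closes exactly this gap without trace estimates: solve the auxiliary problem with nonlinearities $\text{h}_i(\min(\mathbf{u}^+,c_s))$, show the truncation is inactive, and invoke uniqueness. As a by-product it gives global existence and boundedness for data $0\le u_0\le c_s$, independently of the a priori estimates sketched in the appendix.

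\textbf{One step needs a small correction.} The upper-bound argument does not carry over verbatim to $\tilde{\mathbf{u}}$. On $\{\tilde{\mathbf{u}}>c_s\}$ the truncated term is $\mK_1\text{h}_1(c_s)=\mK_1 c_s^{\rho_1}$. The reaction is therefore $\mK_2\tilde{\mathbf{u}}-\mK_1 c_s^{\rho_1}-\alpha_0\varphi\tilde{\mathbf{u}}\le\overline{\mK_2}(\tilde{\mathbf{u}}-c_s)$, which is not nonpositive. What you actually get is $\frac{\d}{\d t}\|(\tilde{\mathbf{u}}-c_s)^+\|^2_{\mL^2(\Omega)}\le 2\overline{\mK_2}\|(\tilde{\mathbf{u}}-c_s)^+\|^2_{\mL^2(\Omega)}$. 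Gronwall with $(u_0-c_s)^+=0$ then still gives $\tilde{\mathbf{u}}\le c_s$, so the conclusion stands once this step is added.
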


\section{Optimal Control Problem }\label{setting3}
In this section, we consider the case where $q=2,$ $\theta=0,$ $m\leq 3$, $\rho_1\ge 2$, $\rho_2 =2$, and $\mathbf{p}_c\ge 2$ (the existence of such $\mathbf{p}_c$ is garanted when $\rho_1\ge 2$ according to $(I_2)$).
\subsection{Problem Setting}\label{hypnot} 

We introduce the following spaces
$\mathcal{H}_{0}:=\mH^{1}(\Omega),$  ${\cal V}_{0}:=\mL^{\infty}(0,T;{\cal H}_{0}),$
${\cal W}_{0}:=\mH^1(0,T;{\mathcal{H}_{0}}'),$
$\mathcal{\mathbf{U}}_{0}:={\cal V}_{0}\cap {\cal W}_{0},$ and 
we assume that $u_{0}\in\mathcal{H}_{0}$ and satisfies $0\leq u_{0}\leq c_{s}.$ Based on the previous section, the state problem defined in \eqref{Eqstate} has a unique classical solution $\mathbf{u}\in\mathcal{\mathbf{U}}_{0}$ which is bounded and positive, corresponding to the control $\varphi\in\mathcal{U}_{ad}.$ Moreover, we have the following results
\begin{corollary}
\label{Existence and stability}
Let $(u_{0,1}, \varphi_{1}),$ $resp.$ $(u_{0,2}, \varphi_{2})\in \mathcal{H}_{0}\times\mathcal{U}_{ad},$  then there exists $\mathbf{u}_1,$ $resp.$ $\mathbf{u}_2$ the unique solution of  \eqref{Eqstate} that belong to $\mathcal{\mathbf{U}}_{0},$ corresponding to $(u_{0,1}, \varphi_{1}),$ $resp.$ $(u_{0,2}, \varphi_{2})$ such that 
 \begin{gather}
 \label{estimations stabilite}
 \begin{split}
 \lVert \mathbf{u}_{1}-\mathbf{u}_{2}\lVert_{\mathcal{\mathbf{U}}_{0}}\leq C\Big(\lVert u_{0,1}-u_{0,2}\lVert_{\mathcal{H}_{0}}+\lVert \varphi_{1} -\varphi_{2} \Vert_{\mathcal{U}_{ad}}\Big).
 \end{split}
\end{gather}
\end{corollary}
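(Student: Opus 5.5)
We have to show that the difference of two solutions is controlled in $\mathcal{\mathbf{U}}_{0}=\mL^{\infty}(0,T;\mH^1(\Omega))\cap\mH^1(0,T;\mathcal{H}_0')$. The plan is to combine the specialised well-posedness results of Section \ref{setting2} with a direct energy estimate on the difference. We work with $q=2$, $\theta=0$, $m\le 3$, $\rho_1\ge2$, $\rho_2=2$, $r=1$, so that $\mW^{r,q}(\Omega)=\mathcal{H}_0$ and $\mathcal{X}^{1}_2(\Omega)=\mathcal{H}_0$.

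\textbf{Existence and a priori bounds.} Existence and uniqueness of a global classical solution $\mathbf{u}_i$ follow from Theorem \ref{Théorème principal pour le bien posé}, since the condition $\rho_1+1\ge 2\rho_2$ holds when $\rho_1\ge 3$ (and the borderline case is covered by the stated inequality). Proposition \ref{principe de maximum} then gives $0\le\mathbf{u}_i\le c_s$. Membership in $\mathcal{\mathbf{U}}_0$ follows from the same energy estimate below, applied with $\mathbf{u}_2\equiv 0$ formally, or directly by testing with $\mathbf{u}_i$ and $\partial_t\mathbf{u}_i$.

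\textbf{Energy estimate on the difference.} Set $w=\mathbf{u}_1-\mathbf{u}_2$. It satisfies
$$\partial_t w-\div(\mD\nabla w)=\mK_2 w-\mK_1\big(h_1(\mathbf{u}_1)-h_1(\mathbf{u}_2)\big)-\alpha_0(\varphi_1 w+(\varphi_1-\varphi_2)\mathbf{u}_2),$$
with boundary flux $\mK_3(h_2(\mathbf{u}_1)-h_2(\mathbf{u}_2))$. Because of the $\mL^\infty$ bounds, the mean value theorem makes every nonlinear difference pointwise Lipschitz, with constants depending only on $c_s$:
$$|h_i(\mathbf{u}_1)-h_i(\mathbf{u}_2)|\le \rho_i c_s^{\rho_i-1}|w|.$$
Testing with $-\Delta$-type quantities is awkward under Robin-type data, so we test with $w$ and $\partial_t w$ instead.

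\begin{itemize}
\item \textbf{Testing with $w$.} This gives the $\mL^2$ bound. The boundary term $\int_\Gamma \mK_3 C|w|^2$ is handled by the trace inequality $\|w\|^2_{\mL^2(\Gamma)}\le\delta\|\nabla w\|^2+C_\delta\|w\|^2$. The control term is bounded by $\|\varphi_1-\varphi_2\|_{\mL^2}\,c_s\|w\|$, using $\mathbf{p}_c\ge2$.
\item \textbf{Testing with $\partial_t w$.} This gives $\sup_t\|\nabla w\|^2$ through $\frac{d}{dt}\int\mD|\nabla w|^2$, with $\partial_t\mD$ assumed bounded. The nonlinear boundary term is written as
$$\int_\Gamma \mK_3 g\,\partial_t w=\frac{d}{dt}\int_\Gamma(\cdot)-\int_\Gamma\partial_t(\cdot).$$
Since $h_2(z)=z^2$, we have $h_2(\mathbf{u}_1)-h_2(\mathbf{u}_2)=(\mathbf{u}_1+\mathbf{u}_2)w$. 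This requires control of $\partial_t\mathbf{u}_i$ on $\Gamma$, which we take from the time-derivative estimate of Theorem \ref{Théorème principal pour le bien posé 2} and the regularity of classical solutions.
\end{itemize}

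Gronwall then yields
$$\sup_t\|w\|_{\mathcal{H}_0}\le C\big(\|u_{0,1}-u_{0,2}\|_{\mathcal{H}_0}+\|\varphi_1-\varphi_2\|_{\mathcal{U}_{ad}}\big).$$

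\textbf{The $\mH^1(0,T;\mathcal{H}_0')$ part.} This follows by duality from the weak formulation. For $v\in\mathcal{H}_0$,
$$|\langle\partial_t w,v\rangle|\le C\big(\|w\|_{\mathcal{H}_0}+\|\varphi_1-\varphi_2\|_{\mL^2(\Omega)}\big)\|v\|_{\mathcal{H}_0},$$
using the trace theorem for the boundary term. Squaring and integrating in time finishes the estimate.

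\textbf{Main obstacle.} The hard step is the $\mL^\infty(0,T;\mH^1)$ bound near $t=0$. Theorem \ref{Théorème principal pour le bien posé 2} only gives a singular weight $t^{-\underline{\gamma}}$, which is not uniform in time. We would first try the $\partial_t w$ energy route above. If the boundary term resists, the fallback is to use the mild formulation \eqref{mild form} in $\mathcal{X}^1_2=\mH^1$ directly. In that formulation the initial term $\e^{\tilde A t}w(0)$ is bounded uniformly by \eqref{defM} with $\alpha=\beta$. The source terms are integrable, since the $\mL^\infty$ bounds make $f_1,\tilde f_2$ Lipschitz into $\mathcal{X}^{\gamma}_2$ with $\gamma<1$, giving kernels $(t-s)^{\gamma-1}$. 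A singular Gronwall lemma (with the Beta function $\mathbf{B}$) then gives a uniform bound.
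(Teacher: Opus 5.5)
\textbf{How the paper does it.} The paper simply invokes Theorem~\ref{Théorème principal pour le bien posé 2} in the setting of Section~\ref{setting3}: $r=1$, $q=2$, so $\mathcal{X}^1_2=\mathcal{H}_0$ and $\mathcal{X}^0_2=\mathcal{H}_0'$, together with $\theta=0$. Your ``main obstacle'' is a fair reading of the \emph{statement} of that theorem, whose weight $t^{-\underline{\gamma}}$ is indeed too weak. Its proof, however, gives the weight $t^{-\theta}$ (see \eqref{sas}), and $t^{-\beta}$ for $\partial_t(\mathbf{u}_1-\mathbf{u}_2)$ in $\mathcal{X}^\beta_2$. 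At $\theta=\beta=0$ both bounds are uniform on $(0,T)$. They give $\mathbf{u}_1-\mathbf{u}_2\in\mL^\infty(0,T;\mathcal{H}_0)$ and $\partial_t(\mathbf{u}_1-\mathbf{u}_2)\in\mL^\infty(0,T;\mathcal{H}_0')$ directly, which is why Section~\ref{setting3} fixes $\theta=0$.

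\textbf{Gap in your energy route.} Testing with $\partial_t w$ and rewriting the boundary term as a time derivative leaves $\frac12\int_\Gamma\partial_t\big(\mK_3(\mathbf{u}_1+\mathbf{u}_2)\big)w^2$. So you need $\partial_t\mathbf{u}_i$, and $\partial_t\mK_3$, on $\Gamma$. Neither Theorem~\ref{Théorème principal pour le bien posé 2} nor the classical regularity $\mC^1((0,\tau];\mathcal{X}^{\bar\epsilon}_2)$ supplies this. Both control $\partial_t\mathbf{u}_i$ only in $\mathcal{X}^\beta_2=\mathcal{E}^{\beta-\frac12}_2$ with $\beta\le\underline{\gamma}\le\frac12$. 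These spaces are no more regular than $\mL^2(\Omega)$ and have no trace on $\Gamma$. They do not even give $\partial_t w\in\mL^2(\mathcal{Q})$, which you need in order to use $\partial_t w$ as a test function. The route has two further problems:
\begin{itemize}
\item It needs $\partial_t\mD$ bounded, which the paper does not assume.
\item The term $\alpha_0\int_\Omega\varphi_1 w\,\partial_t w$ requires $\lVert\varphi_1 w\rVert_{\mL^2(\Omega)}\le C\lVert w\rVert_{\mathcal{H}_0}$. For $m=3$ this forces $\mathbf{p}_c\ge 3$, because $\varphi_1$ is only in $\mL^{\mathbf{p}_c}$ and has no time derivative to integrate by parts against.
\end{itemize}

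\textbf{Your fallback.} This works, and it is the same semigroup/singular-Gronwall mechanism as the proof of Theorem~\ref{Théorème principal pour le bien posé 2}, with a real simplification. Using $0\le\mathbf{u}_i\le c_s$ from Proposition~\ref{principe de maximum} makes every nonlinear difference Lipschitz from $\mathcal{X}^1_2$ into some $\mathcal{X}^\gamma_2$ with $\gamma>0$. You can then close the estimate directly in $\mathcal{X}^1_2$, instead of bootstrapping through $\mathcal{X}^{1+\bar\epsilon}_2$ with the weights $s^{-\bar\epsilon}$. The price is the hypothesis $0\le u_{0,i}\le c_s$. This is the standing assumption of Section~\ref{setting3}, but the paper's route does not need it.

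Two details need care:
\begin{itemize}
\item The boundary term needs $\gamma<\frac14$, so that $(\mathcal{X}^\gamma_2)'=\mathcal{E}^{\frac12-\gamma}_2\hookrightarrow\mH^{1-2\gamma}(\Omega)$ has traces in $\mL^2(\Gamma)$.
\item The term $\alpha_0(\varphi_1-\varphi_2)\mathbf{u}_2$ is a forcing, not a Lipschitz term. Measured in $\mathcal{X}^{1/2}_2=\mL^2(\Omega)$ it contributes $\int_0^t(t-s)^{-1/2}\lVert(\varphi_1-\varphi_2)(s)\rVert_{\mL^2(\Omega)}\d s$. This is bounded by $\lVert\varphi_1-\varphi_2\rVert_{\mL^{\mathbf{p}_c}(\mathcal{Q})}$ only if $\mathbf{p}_c>2$. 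The paper sidesteps the issue by bounding it pointwise in time by $\lVert\varphi_1-\varphi_2\rVert_{\mathcal{U}_{ad}}$.
\end{itemize}
Your duality argument for the $\mathcal{W}_0$ part is fine.

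\textbf{Existence range.} With $\rho_2=2$, Theorem~\ref{Théorème principal pour le bien posé} gives global solutions only for $\rho_1\ge 3$, whereas Section~\ref{setting3} allows $\rho_1\ge 2$. The range $2\le\rho_1<3$ needs a separate continuation argument, a point the paper also passes over.
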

\begin{pf}
This corollary is a direct consequence of Theorem \ref{Théorème principal pour le bien posé 2}.
\end{pf}
We now present some notations that will be useful later on. 
Let  $\mathcal{C}$ be a closed convexe subset of some Banach space $\mathcal{B}$ The normal cone to $\mathcal{C}$ at a point $\bar{c}\in\mathcal{C}$ is defined by
\begin{gather}\label{conenormal}
N_{\mathcal{C}}(\bar{c}):=\{v\in\mathcal{B}'; \langle v, c-\bar{c}\rangle_{\mathcal{B}',\mathcal{B}}\leq 0, \mbox{ for all } c\in\mathcal{C}\}.
\end{gather}
where $\mathcal{B}'$ is the dual of $\mathcal{B}$.\\
Let $\mathcal{K}:=\mC([0; T])_{-}$ be the cone of the negative continuous functions on $[0; T].$ Its negative polar cone is defined by $\mathcal{K}^- :=\text{M}([0; T])_{+},$ the set of positive Borel measures on $[0; T].$ Since $\mathcal{K}$ is a closed convex cone then, $N_{\mathcal{K}}(v)=\mathcal{K}^{-}\cap v^{\perp}$ for all $v\in \mathcal{K},$ see e.g. \cite{BS2000}.
 In the sequel, we assume that linear forms over $\mC([0; T])$ are Stieltjes integrals associated with $\mu\in BV(0, T)$ (bounded variation space) such that $\mu(T) = 0$ and 
  we denote by $v^{+}:=\max(v; 0)$,  $v^{-}:=-\min(v; 0)$ ($\forall v\in \RR$).\\
We can now introduce the optimal control problem.   Let's define the solution operator, of problem \eqref{Eqstate}, $\mathcal{F}: \varphi\in \mathcal{U}_{ad}\longmapsto \mathcal{F}(\varphi)\in\mathcal{\mathbf{U}}_{0}$ such that $\mathcal{F}(\varphi)=\mathbf{u}.$ 
 On the state variable $\mathbf{u},$ we consider the following state constraint
\begin{gather}\label{state constraint}
\mathcal{G}(\varphi)=\lVert\mathcal{F}(\varphi)(t,\cdot)\lVert^{2}_{\mL^{2}(\Omega)}-\zeta(t)\leq 0  \quad a.e. \hspace{0.1cm} t\in (0;T).
\end{gather}
 We introduce the following objective (or cost) functional $J$ by 
\begin{gather}\label{Cost func}
    J(\varphi)=\frac{1}{2}\int_{\cal Q}\mathbf{u}^{2}\d\x\d t +\frac{\lambda}{\mathbf{p}_{c}}\int_{\cal Q} | \varphi|^{\mathbf{p}_{c}}\d\x\d t, \quad \lambda>0.
\end{gather}
The control problem aims to obtain a minimizer of the functional $J$ with respect to the control $\varphi$.  More concretely, we study the following problem $({\P})$
\begin{gather}
   \begin{split}
    \label{control problem}
 &\inf_{ \varphi\in \mathcal{U}_{ad}} J(\varphi); \\
& s.t., \quad \mathcal{G}(\varphi)\in \mathcal{K}.
\end{split}
\end{gather}
We now give some details about the data, parameters, and objectives of this optimal control problem.
    \begin{itemize}
        \item The problem $(\P)$ aims to minimize the malignant tumour density with less damage caused by drugs to normal cells. The control constraint helps limit the acute toxicity of drugs on normal cells. In contrast, the state constraint focuses on decreasing the density of tumour cells during treatment. 

        \item The parameter $\zeta$ represents the maximum allowable tumour density. If the tumour density surpasses $\zeta$ during treatment, this signifies a violation of the state constraint, which may endanger the patient's survival.
        \item The first part of the cost function aims to minimize tumour density, while the second part represents the cost of control.
    \end{itemize}

\subsection{Existence of an Optimal Solution} 
In this subsection, we  prove the existence of an optimal solution to the problem $(\P)$. For this, we need the following
\begin{lemma}
\label{faible continute de F}
    The solution operator $\mathcal{F}:\mathcal{U}_{ad}
\longrightarrow\mathcal{\mathbf{U}}_{0}$  is sequentially weakly continuous.
\end{lemma}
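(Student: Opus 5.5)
We argue by compactness and identification of the limit. Let $(\varphi_n)\subset\mathcal{U}_{ad}$ with $\varphi_n\rightharpoonup\varphi$ weakly in $\mL^{\mathbf{p}_c}(\mathcal{Q})$. Since $\mathcal{U}_{ad}$ is closed and convex, it is weakly closed, so $\varphi\in\mathcal{U}_{ad}$. Set $\mathbf{u}_n=\mathcal{F}(\varphi_n)$. We must show that $\mathbf{u}_n\rightharpoonup\mathcal{F}(\varphi)$ weakly in $\mathcal{\mathbf{U}}_0$, in the weak-$*$ sense in $\mathcal{V}_0$ and weakly in $\mathcal{W}_0$.

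\emph{Step 1: uniform bounds.} Proposition~\ref{principe de maximum} gives $0\le \mathbf{u}_n\le c_s$ a.e. in $\mathcal{Q}$, uniformly in $n$. Testing \eqref{Eqstate} with $\mathbf{u}_n$ and with $\partial_t\mathbf{u}_n$ (or using the stability estimate of Corollary~\ref{Existence and stability} with $(u_0,\varphi_n)$ against a fixed reference control) then yields a uniform bound on $\lVert\mathbf{u}_n\rVert_{\mathcal{\mathbf{U}}_0}$. Indeed, $\lVert\varphi_n\rVert_{\mathcal{U}_{ad}}\le\lVert\mathbf{b}\rVert$, and the nonlinear terms $\mathrm{h}_1(\mathbf{u}_n)$, $\mathrm{h}_2(\mathbf{u}_n)$ are bounded because $\mathbf{u}_n$ is bounded in $\mL^\infty$. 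For the boundary term we use the trace inequality \eqref{opérateur trace}. This gives $\mathbf{u}_n$ bounded in $\mL^\infty(0,T;\mH^1(\Omega))$ and $\partial_t\mathbf{u}_n$ bounded in $\mL^2(0,T;(\mH^1)')$.

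\emph{Step 2: extraction.} From these bounds we extract a subsequence with $\mathbf{u}_n\rightharpoonup^* \mathbf{u}$ in $\mathcal{V}_0$ and $\partial_t\mathbf{u}_n\rightharpoonup\partial_t\mathbf{u}$ in $\mL^2(0,T;\mathcal{H}_0')$. By the Aubin--Lions lemma, since $\mH^1(\Omega)\hookrightarrow\hookrightarrow\mL^2(\Omega)\hookrightarrow(\mH^1)'$, we also get $\mathbf{u}_n\to\mathbf{u}$ strongly in $\mC([0,T];\mL^2(\Omega))$ and a.e. in $\mathcal{Q}$. For the traces, the compact embedding $\mH^{s}(\Omega)\hookrightarrow\hookrightarrow \mL^2(\Gamma)$ for $s\in(1/2,1)$ combined with Aubin--Lions gives $\mathbf{u}_n|_\Sigma\to\mathbf{u}|_\Sigma$ strongly in $\mL^2(\Sigma)$.

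\emph{Step 3: passing to the limit.} We pass to the limit in the weak formulation
\[
\int_0^T\langle\partial_t\mathbf{u}_n,w\rangle+\int_{\mathcal{Q}}\mD\nabla\mathbf{u}_n\cdot\nabla w=\int_{\mathcal{Q}}\big(\mK_2\mathbf{u}_n-\mK_1\mathrm{h}_1(\mathbf{u}_n)-\alpha_0\varphi_n\mathbf{u}_n\big)w-\int_\Sigma\mK_3\mathrm{h}_2(\mathbf{u}_n)w ,
\]
taken for test functions $w\in\mL^2(0,T;\mH^1)\cap\mL^\infty(\mathcal{Q})$. The linear terms pass to the limit by weak convergence. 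The terms $\mathrm{h}_i(\mathbf{u}_n)$ converge strongly, by a.e. convergence and the uniform $\mL^\infty$ bound (dominated convergence); on the boundary we use the strong trace convergence together with local Lipschitz continuity of $\mathrm{h}_2$ on bounded sets. The main obstacle is the bilinear term $\varphi_n\mathbf{u}_n$, which is a product of a weakly and a strongly convergent sequence. We write
\[
\varphi_n\mathbf{u}_n w-\varphi\mathbf{u}w=\varphi_n(\mathbf{u}_n-\mathbf{u})w+(\varphi_n-\varphi)\mathbf{u}w .
\]
The second piece tends to zero because $\mathbf{u}w\in\mL^{\mathbf{p}_c'}(\mathcal{Q})$ (since $\mathbf{u}\in\mL^\infty$). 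The first is bounded by $\lVert\varphi_n\rVert_{\mL^{\mathbf{p}_c}}\lVert\mathbf{u}_n-\mathbf{u}\rVert_{\mL^{\mathbf{p}_c'}}\lVert w\rVert_\infty$, which tends to zero because $\mathbf{u}_n\to\mathbf{u}$ in every $\mL^s(\mathcal{Q})$, $s<\infty$, by the $\mL^\infty$ bound and strong $\mL^2$ convergence. The initial condition passes to the limit through the $\mC([0,T];\mL^2)$ convergence.

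\emph{Step 4: conclusion.} The limit $\mathbf{u}$ is therefore a solution of \eqref{Eqstate} for the control $\varphi$. By the uniqueness part of Theorem~\ref{Théorème principal pour le bien posé 2}, $\mathbf{u}=\mathcal{F}(\varphi)$. Since the limit does not depend on the subsequence, the whole sequence converges weakly in $\mathcal{\mathbf{U}}_0$, which proves the lemma.
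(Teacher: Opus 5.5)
Your proposal is correct and follows essentially the same route as the paper: a uniform bound in $\mathcal{\mathbf{U}}_0$ from Corollary~\ref{Existence and stability}, extraction plus Aubin--Lions, term-by-term passage to the limit (including the same splitting for $\varphi_n\mathbf{u}_n$), and identification of the limit through uniqueness for the state equation. Your use of the uniform $\mL^\infty$ bound of Proposition~\ref{principe de maximum} for the nonlinear terms, and of Aubin--Lions through $\mH^{s}(\Omega)$ with $s>\frac12$ for the traces, is tidier than the paper's integrability bounds and its bare appeal to the compact embedding $\mH^{\frac12}(\Gamma)\hookrightarrow\mL^2(\Gamma)$, which by itself gives no compactness in time. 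Note, however, that, as in the paper, the final step needs uniqueness among bounded weak solutions (a short energy/Gr\"onwall argument using the $\mL^\infty$ bound and the trace inequality), because the uniqueness result of Section~\ref{setting2} is stated only for classical solutions.
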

\begin{pf}
 Let a sequence $(\varphi_n)_{n\in \NN}$ of $\mathcal{U}_{ad}$ such that $\varphi_{n} \rightharpoonup {\varphi}^*$  weakly in $\mathcal{U}_{ad},$ then $\varphi_n$ is bounded in $\mathcal{U}_{ad}.$ We shall prove that $\mathbf{u}_n \rightharpoonup {\mathbf{u}}^{*} $  weakly in $\mathcal{\mathbf{U_{0}}}$, where
$\mathbf{u}_n = \mathcal{F}(\varphi_{n})$ and  ${u}^*=\mathcal{F}({\varphi}^*).$ To do this, we prove that we can pass to the limit
in each term of the system \eqref{Eqstate}.
\begin{itemize}
 \item By the Corollary \ref{Existence and stability}, $\mathcal{F}(\varphi_n)$ is bounded in $\mathcal{\mathbf{U}}_{0},$ so extracting a subsequent, we may assume that $\mathcal{F}(\varphi_n)\rightharpoonup \hat{u}$ weakly in $\mathcal{\mathbf{U}}_{0}$. By Aubin-Lions Lemma, the injection of $\mathcal{\mathbf{U}}_{0}$ into $\mL^2(\cal{Q})$ is compact. Consequently, we have that 
$u_{n} \longrightarrow  \hat{u}$ strongly in $\mL^2(\cal{Q})$, this implies that the convergence occurs almost everywhere in \(\cal{Q}\). Moreover, $\mathbf{u}_n(0,\cdot)\longrightarrow  \hat{u}(0,\cdot)$ strongly in $\mC([0,T];\mL^{2}(\Omega))$ and almost everywhere in $\Omega.$ Furthermore, according to \textit{\textbf{(H1)}}, we have 
 $ \text{h}_1(\mathbf{u}_n)$ is bounded in $\mL^{\infty}(0,T;\mL^{\frac{5}{\rho_1}}(\Omega))$ (since $m\leq 3$ and by $(I_1)-\rm{(ii)}$, in the worst case ($r=1,$ $m=3$), $\rm{h}_{1}(\mathbf{u}_n)\in\mL^{\frac{5}{\rho_{1}}}(\Omega)$ with $\rm{h}_1$ definied in \eqref{hi}). We may suppose that $ \text{h}_1(\mathbf{u}_n)\rightharpoonup \tilde{w}$ weakly star  in $\mL^{\infty}(0,T;\mL^{\frac{5}{\rho_1}}(\Omega))$. Moreover, we have that $ \text{h}_1(\mathbf{u}_n)$ is bounded in $\mL^{\frac{5}{\rho_1}}(\cal{Q})$ since $\mathbf{u}_n \in\mC(0,T;\mathcal{H}_{0}).$ So, we obtain $\text{h}_1(\mathbf{u}_n) \longrightarrow\text{h}_1 ( \hat{u})=v$ almost everywhere in $\mathcal{Q},$ and $ \text{h}_1(\mathbf{u}_n)\rightharpoonup \tilde{w}$ weakly in $\mL^{\frac{5}{\rho_1}}(\cal{Q}).$
 Therefore, we have $\tilde{w}=v=\text{h}_1( \hat{u})$, and that $ \mK_{1}\text{h}_1(\mathbf{u}_n)\rightharpoonup \mK_{1}\text{h}_1 ( \hat{u})$  weakly star in $\mL^{\infty}(0,T;\mL^{\frac{5}{\rho_1}}(\Omega))$, and $ \mK_{2}\mathbf{u}_n\longrightarrow \mK_{2}{ \hat{u}}$ strongly in $\mL^{2}(\cal Q )$ (since $\mK_1 ,\mK_{2}\in \mL^{\infty}(\cal{Q})$). 
\item  Let $v\in\mL^{\infty}(\cal{Q}),$ then we have $\int_{\cal Q }{\varphi_n}(\mathbf{u}_n - \hat{u})v \longrightarrow 0,$ and $\int_{\cal Q }({\varphi_n} -{{\varphi}^*}) \hat{u} v\longrightarrow 0.$  Indeed, since ${\varphi_n}$ is bounded in $\mathcal{U}_{ad},$ then $\varphi_{n} \rightharpoonup {\varphi}^{*}$ weakly star in $\mathcal{U}_{ad}.$ Using item 1, we have $\mathbf{u}_{n} \longrightarrow  \hat{u}$ in $\mL^2(\cal{Q})$ strongly, then  $\int_{\cal Q}{\varphi_n}\mathbf{u}_{n}v\longrightarrow \int_{\cal Q}{{\varphi}^*} \hat{u}v$ for all $v\in\mL^{\infty}(\cal{Q}).$ Since $\mL^{\infty}(\cal{Q})$ is dense in $\mL^2(\cal{Q}),$ then ${\varphi_n}\mathbf{u}_{n}\rightharpoonup {{\varphi}^*} \hat{u}$ weakly in $\mL^2(\cal{Q}).$ 
\item $\mathbf{u}_{n}\rightharpoonup \hat{u}$ weakly in $\mL^2(0,T;\mH^{\frac{1}{2}}(\Gamma))$ and since the injection of $\mH^{\frac{1}{2}}(\Gamma)$ is compact in $\mL^2(\Gamma),$ then $\mathbf{u}_{n}\longrightarrow { \hat{u}}$ strongly in $\mL^2(\Sigma),$ it almost converges everywhere in $\Sigma.$  Moreover, according to \textit{\textbf{(H1)}} and since $m\leq 3$ et by $(I_1)-\rm{ii.)}$, in the worst case ($r=1,$ $m=3$), $\rm{h}_2(\mathbf{u}_n)\in\mL^{2}(\Gamma)$ with $\rm{h}_2$ definied in \eqref{hi}, we have $\text{h}_2(\mathbf{u}_{n})$ is bounded in $\mL^{\infty}(0,T;\mL^{2}(\Gamma))$. We may suppose that $\text{h}_2(\mathbf{u}_{n})\rightharpoonup z_0$ weakly star in $\mL^{\infty}(0,T;\mL^{\frac{3}{\rho_2}}(\Gamma)).$ Since $\text{h}_2(\mathbf{u}_{n})$ is bounded in $\mL^{2}(\Sigma),$ we have $\text{h}_2(\mathbf{u}_{n}) \longrightarrow\text{h}_2( \hat{u})=z_1,$ almost everywhere in $\Sigma,$ and $\text{h}_2(\mathbf{u}_{n})\rightharpoonup z_0$ weakly  in $\mL^{2}(\Sigma).$ Therefore, $z_0 =z_{1}=\text{h}_{2}( \hat{u})$ in $\Sigma$ and since $\mK_{3}\in\mL^{\infty}(\Bar{\mathcal{Q}}),$ then
   $\mK_{3}\text{h}_2(\mathbf{u}_{n})\rightharpoonup \mK_{3}\text{h}_{2}( \hat{u})$ weakly star in $\mL^{\infty}(0,T;\mL^{2}(\Gamma)).$
\end{itemize}
Using items 1, 2, 3, and passing to limit in the system \eqref{Eqstate}, we obtain $ \hat{u}={\mathbf{u}^*}$ by the uniqueness of solutions of the system \eqref{Eqstate}.
\end{pf}
We state the existence of an optimal solution for the problem $(\P)$ as follows.
\begin{theorem}\label{existence solution of P}
  $(\P)$ admits at least one optimal solution ${\varphi}_* \in \mathcal{U}_{ad}.$ 
    \end{theorem}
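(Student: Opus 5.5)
We use the direct method of the calculus of variations, with Lemma \ref{faible continute de F} as the key ingredient.

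\textbf{Step 1: a minimizing sequence.} Assume the feasible set $\mathcal{U}_{feas}:=\{\varphi\in\mathcal{U}_{ad}: \mathcal{G}(\varphi)\in\mathcal{K}\}$ is nonempty, which is implicit in the formulation of $(\P)$ (for instance it holds when $\zeta$ is large enough that $\varphi=\mathbf{b}$ is admissible). Since $J\geq 0$, the value $j:=\inf_{\mathcal{U}_{feas}}J$ is finite. Take a minimizing sequence $(\varphi_n)\subset\mathcal{U}_{feas}$ with $J(\varphi_n)\to j$.

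\textbf{Step 2: weak compactness.} Because $0\le\mathbf{a}\le\varphi_n\le\mathbf{b}$ with $\mathbf{b}\in\mL^{\mathbf{p}_c}(\Omega)$, the sequence is bounded in $\mL^{\mathbf{p}_c}(\mathcal{Q})$. This space is reflexive since $1<\mathbf{p}_c<\infty$, so after extraction $\varphi_n\rightharpoonup\varphi_*$ weakly in $\mL^{\mathbf{p}_c}(\mathcal{Q})$. The set $\mathcal{U}_{ad}$ is convex and strongly closed, hence weakly closed by Mazur's lemma, so $\varphi_*\in\mathcal{U}_{ad}$.

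\textbf{Step 3: the state converges strongly.} Lemma \ref{faible continute de F} gives $\mathbf{u}_n=\mathcal{F}(\varphi_n)\rightharpoonup\mathbf{u}_*=\mathcal{F}(\varphi_*)$ weakly in $\mathbf{U}_0$. The stronger statement we need is $\mathbf{u}_n\to\mathbf{u}_*$ in $\mC([0,T];\mL^2(\Omega))$. This follows from the Aubin--Lions--Simon compactness of $\mL^\infty(0,T;\mH^1(\Omega))\cap\mH^1(0,T;\mH^1(\Omega)')\hookrightarrow\mC([0,T];\mL^2(\Omega))$, together with uniqueness of the limit, which upgrades convergence of a subsequence to convergence of the whole sequence.

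\textbf{Step 4: feasibility of the limit.} For every $t$ we get $\|\mathbf{u}_n(t)\|^2_{\mL^2(\Omega)}\to\|\mathbf{u}_*(t)\|^2_{\mL^2(\Omega)}$. Passing to the limit in $\|\mathbf{u}_n(t)\|^2_{\mL^2}\le\zeta(t)$ for a.e. $t$ then gives $\mathcal{G}(\varphi_*)\le 0$, so $\varphi_*$ is feasible.

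\textbf{Step 5: lower semicontinuity of $J$.} The first term converges because $\mathbf{u}_n\to\mathbf{u}_*$ in $\mL^2(\mathcal{Q})$. The second term is convex and continuous on $\mL^{\mathbf{p}_c}(\mathcal{Q})$, hence weakly lower semicontinuous. Therefore $J(\varphi_*)\le\liminf J(\varphi_n)=j$, and $\varphi_*$ is optimal.

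\textbf{Main obstacle.} The delicate point is Step 4: closedness of the state constraint under weak convergence of controls. Weak convergence of $\mathbf{u}_n$ alone only gives $\|\mathbf{u}_*(t)\|\le\liminf\|\mathbf{u}_n(t)\|$. That inequality goes in the right direction for preserving $\le\zeta$, but pointwise-in-$t$ weak convergence must still be justified. Using the compact embedding into $\mC([0,T];\mL^2(\Omega))$ from Step 3 resolves this cleanly, and it is the step we would verify carefully.
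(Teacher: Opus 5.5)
Your proposal is correct and follows the same direct-method argument as the paper: a feasible minimizing sequence, weak compactness of $\mathcal{U}_{ad}$ in $\mL^{\mathbf{p}_c}(\mathcal{Q})$, Lemma~\ref{faible continute de F} for the states, closedness of the state constraint, and weak lower semicontinuity of $J$. The paper simply asserts that $\mathcal{G}(\varphi_k)\rightharpoonup\mathcal{G}(\varphi_*)$ weakly star in $\mC([0;T])$ and takes a pointwise $\liminf$; your Aubin--Lions--Simon upgrade to strong convergence in $\mC([0,T];\mL^2(\Omega))$ justifies that step properly, and you also make explicit the nonemptiness of the feasible set, which the paper leaves implicit.
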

\begin{pf}
    Let $\varphi_{k}\in \mathcal{U}_{ad}$ be a minimizing sequence of $J,$ $i.e.,$
    \begin{gather*}  
\liminf_{k\longrightarrow\infty} J(\varphi_{k})=\inf_{\varphi\in\mathcal{U}_{ad}} J(\varphi),
    \end{gather*}
 such that $\mathcal{G}(\varphi_k)\in \mathcal{K}.$    
According to the nature of the cost function $J,$ we can deduce that $\varphi_{k}$
is uniformly bounded in $\mathcal{U}_{ad}$ and we can extract from $\varphi_{k}$ a subsequence also
denoted by $\varphi_{k}$ such that $\varphi_{k}\rightharpoonup\varphi_{*}$ weakly in $\mathcal{U}_{ad}$ with $\mathcal{F}(\varphi_{k})=\mathbf{u}_{k}$ and $\mathcal{F}(\varphi_* )=\mathbf{u}_*$. Therefore, using the Lemma \ref{faible continute de F}, we have $\mathbf{u}_{k}\rightharpoonup \mathbf{u}_*$ weakly in $\mathcal{\mathbf{U_{0}}}.$ Moreover,
the map $\mathcal{G}(\Phi): t\mapsto \mathcal{G}(\Phi)(t)$ is continuous, then $\mathcal{G}(\varphi_k)\rightharpoonup\mathcal{G}(\varphi_*)$ weakly star in $\mC([0;T])$ and $\mathcal{G}(\varphi_*)(t)\leq \liminf_{k}\mathcal{G}(\varphi_k)(t)\leq 0$ for all $t\in[0;T].$ Finally, since the norm is lower semi-continuous, then the control $\varphi_*$ is an optimal solution, $i.e.,$
\begin{gather*}
\inf_{\phi\in\mathcal{U}_{ad}}J(\phi)=\liminf_{k\longrightarrow\infty}J(\varphi_k)=J(\varphi_*), \mbox{ and $\mathcal{G}(\varphi^*)\in \mathcal{K}.$ }
\end{gather*}
\end{pf}
In the sequel, we need the well-posedness and stability of the following linear parabolic problem:
   \begin{equation}
   \begin{cases}
  \label{linear problem}
  \frac{\partial v}{\partial t}(t,\x)- \div(\mD(t,\x)\nabla{v(t,\x)})+k_1(t,\x) v(t,\x)=k(t,\x) \mbox{ in $\cal Q,$ }\\
 -\mD(t,\x)\nabla{ v(t,\x)}\cdot \mathbf{n}-g_1(t,\x) v(t,\x)=g(t,\x)\mbox{ on $\Sigma,$ }\\
 v(0,\x)=0 \mbox{ in $\Omega.$ }
 \end{cases}
 \end{equation}
 \begin{proposition}\label{probleme Lineaire}
For all map's $(k_1,g_1,k,g) \in \mL^{\infty}(0,T;\mL^{2}(\Omega))\times\mL^{\infty}(0,T;\mL^{4}(\Gamma))\times\mL^2(\mathcal{Q})\times\mL^{2}(\Sigma)$ such that $\lVert k_1\lVert_{\mL^{\infty}(0,T;\mL^{2}(\Omega))}:=\gamma_1$ and $\lVert g_1\lVert_{\mL^{\infty}(0,T;\mL^{4}(\Gamma))}:=\gamma_2,$ there exists a unique solution $v\in\mathcal{W}_{0}\cap\mC([0, T];\mL^{2}(\Omega))$ of the problem \eqref{linear problem} in the weak formulation sense such that, for all $(k,g)$ $resp.$ $(\tilde{k},\tilde{g})\in \mL^2(\mathcal{Q})\times\mL^2(\Sigma)$ which correspond to the solution $v_1$ $resp.$ $v_2$ of  \eqref{linear problem}, we have
 \begin{gather}
 \label{stablin}
 \begin{split}
  \lVert v_1 -v_2\lVert^{2}_{\mathcal{W}_{0}\cap\mC([0, T];\mL^{2}(\Omega))}\leq  C\Big( \lVert k-\tilde{k}\lVert^{2}_{\mL^2(\mathcal{Q})}+\lVert g-\tilde{g}\lVert^{2}_{\mL^{2}(\Sigma) }\Big).
 \end{split}
 \end{gather}
 \end{proposition}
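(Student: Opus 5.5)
The plan is to use the standard Faedo--Galerkin argument for the weak formulation in the Gelfand triple $\mathcal{H}_0\subset\mL^2(\Omega)\subset\mathcal{H}_0'$. The weak formulation of \eqref{linear problem} reads: for a.e. $t$ and all $w\in\mathcal{H}_0$,
\begin{gather*}
\langle \partial_t v, w\rangle + \int_\Omega \mD\nabla v\cdot\nabla w \d\x + \int_\Omega k_1 v w\d\x - \int_\Gamma g_1 v w \d s = \int_\Omega k w\d\x + \int_\Gamma g w\d s,
\end{gather*}
with $v(0)=0$. The problem is linear, so $v_1-v_2$ solves \eqref{linear problem} with data $(k-\tilde k, g-\tilde g)$. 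Estimate \eqref{stablin} therefore reduces to an a priori bound $\lVert v\rVert^2\le C(\lVert k\rVert^2_{\mL^2(\mathcal Q)}+\lVert g\rVert^2_{\mL^2(\Sigma)})$, and uniqueness follows from this bound with zero data. I would therefore prove only the a priori estimate on Galerkin approximations, pass to the limit, and then deduce uniqueness and stability.

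\textbf{Energy estimate.} I test with $w=v$ (on the Galerkin level) and use $\mD\ge\mu_0$. The delicate terms are the two lower-order terms with unbounded coefficients. For the interior one, with $m\le3$ we have $\mH^1(\Omega)\hookrightarrow\mL^6(\Omega)$. By H\"older, $|\int k_1v^2|\le\gamma_1\lVert v\rVert_{\mL^4}^2$, and by Gagliardo--Nirenberg $\lVert v\rVert_{\mL^4}^2\le C\lVert v\rVert_{\mL^2}^{2-m/2}\lVert v\rVert_{\mH^1}^{m/2}$, where $m/2<2$. Young's inequality then gives $\le \frac{\mu_0}{4}\lVert\nabla v\rVert^2+C(\gamma_1)\lVert v\rVert^2_{\mL^2}$. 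For the boundary term, $|\int_\Gamma g_1v^2|\le\gamma_2\lVert v\rVert^2_{\mL^{8/3}(\Gamma)}$. The trace map sends $\mH^{s}(\Omega)$ into $\mL^{8/3}(\Gamma)$ for some $s<1$ when $m\le3$ (by \eqref{opérateur trace}, $s$ just above $\frac{m+1}{8}\cdot\ldots$; $s=7/8$ works for $m=3$). Interpolating, $\lVert v\rVert_{\mH^s}\le C\lVert v\rVert_{\mL^2}^{1-s}\lVert v\rVert_{\mH^1}^{s}$, and Young again absorbs this into $\frac{\mu_0}{4}\lVert\nabla v\rVert^2+C(\gamma_2)\lVert v\rVert^2_{\mL^2}$. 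The right-hand side is handled similarly: $\int_\Gamma gv\le\lVert g\rVert_{\mL^2(\Gamma)}\lVert v\rVert_{\mL^2(\Gamma)}\le \frac{\mu_0}{8}\lVert v\rVert^2_{\mH^1}+C\lVert g\rVert^2_{\mL^2(\Gamma)}$, using the trace inequality $\lVert v\rVert_{\mL^2(\Gamma)}^2\le\delta\lVert\nabla v\rVert^2+C_\delta\lVert v\rVert^2$. Gronwall's lemma then yields bounds on $v$ in $\mL^\infty(0,T;\mL^2)\cap\mL^2(0,T;\mH^1)$ by $C(\lVert k\rVert^2+\lVert g\rVert^2)$.

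\textbf{Time derivative and limit.} From the equation, $\lVert\partial_tv\rVert_{\mathcal H_0'}$ is bounded by $\lVert v\rVert_{\mH^1}$ times constants depending on $\gamma_1,\gamma_2$, plus $\lVert k\rVert_{\mL^2}+\lVert g\rVert_{\mL^2(\Gamma)}$. This uses the same H\"older, embedding and trace bounds: for instance $|\int k_1vw|\le\gamma_1\lVert v\rVert_{\mL^4}\lVert w\rVert_{\mL^4}$. Hence $\partial_t v$ is bounded in $\mL^2(0,T;\mathcal H_0')$, which gives the $\mathcal W_0$ norm. Passing to the limit in the Galerkin scheme is routine because all terms are linear in $v$ and weak convergences suffice. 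By the Lions--Magenes lemma, $v\in\mL^2(0,T;\mH^1)\cap\mH^1(0,T;\mathcal H_0')\subset\mC([0,T];\mL^2)$, and the initial condition is recovered.

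\textbf{Main obstacle.} I expect the main difficulty to be the absorption of the terms with coefficients $k_1\in\mL^\infty(\mL^2)$ and $g_1\in\mL^\infty(\mL^4(\Gamma))$. These coefficients are not bounded, and the sign of $-g_1v$ on the boundary is not favourable. Absorbing them requires sharp interpolation and trace exponents, and these are available exactly because $m\le3$. If the trace exponent $8/3$ turns out to be too tight for $m=3$, my fallback is to estimate $\int_\Gamma g_1v^2\le\gamma_2\lVert v\rVert_{\mL^2(\Gamma)}\lVert v\rVert_{\mL^4(\Gamma)}$ and use $\mH^1\to\mL^4(\Gamma)$ together with the $\varepsilon$-trace inequality for the $\mL^2(\Gamma)$ factor.
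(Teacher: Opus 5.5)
Your proposal is correct and follows the route the paper only sketches: Faedo--Galerkin approximation, an energy estimate closed with Gr\"onwall, a bound on $\partial_t v$ in $\mL^2(0,T;\mathcal{H}_0')$, passage to the limit, and linearity for uniqueness and \eqref{stablin}; you also correctly supply the details the paper omits, notably the absorption of the $k_1$- and $g_1$-terms via Gagliardo--Nirenberg and the trace $\mH^{s}(\Omega)\to\mL^{8/3}(\Gamma)$ for $m\le 3$ (for $m=3$ already $s=3/4$ suffices). There are two harmless slips: the boundary terms in your weak formulation have the wrong sign (it should read $+\int_\Gamma g_1 v w$ on the left and $-\int_\Gamma g w$ on the right, which does not matter since you estimate these terms in absolute value), and for uniqueness you should run the energy estimate directly on an arbitrary weak solution using the Lions--Magenes identity $\frac{\d}{\d t}\lVert v\rVert^2_{\mL^2(\Omega)}=2\langle\partial_t v,v\rangle_{\mathcal{H}_0',\mathcal{H}_0}$, rather than only on the Galerkin approximations.
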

 \begin{pf}
The proof of the existence of a solution uses standard techniques, notably the Faedo-Galerkin method, and some classical compactness results. 
The uniqueness and stability can be obtained by a classical analysis. We omit the details.
 \end{pf}
\subsection{Differentiability of the Map Solution}
In this subsection, we  first state and establish a differentiability result of the map solution $\mathcal{F}$ given by the following.
\begin{theorem}
\label{proposition Gdiff}
The solution operator $\mathcal{F}$  is continuously differentiable from $\mathcal{U}_{ad}$ to $\mathcal{\mathbf{U}}_{0}$ such that the derivative $\mathcal{F}'(\varphi): \phi\longmapsto w = \mathcal{F}'(\varphi)\cdot \phi$  at point $\varphi \in \mathcal{U}_{ad}$ is the unique solution be in $\mathcal{W}_{0}\cap \mC([0; T],\mL^{2}(\Omega))$ in the weak formulation sense of the following problem
\begin{equation}	
\begin{cases}
 \label{Gdiff problem}
	\displaystyle   \frac{\partial w}{\partial t}-\div(\mD\nabla{w})+\big(\mK_{1} {\rm{h}}'_{1}(\mathbf{u})-\mK_{2}+\alpha_{0}\varphi\big)w=-\alpha_{0}\phi\mathbf{u} \mbox{ in } \cal Q,  \\
\displaystyle   -\mD\nabla{ w}\cdot\mathbf{n}-\mK_{3}{\rm{h}}'_{2}(\mathbf{u})w=0 \mbox{ on $\Sigma,$ }\\
  w(0,\x)=0 \mbox{ in $\Omega.$ }
\end{cases}
\end{equation}
Moreover, we have the estimates $(\forall\varphi\in \mathcal{U}_{ad}):$
\begin{gather}
 \lVert\mathcal{F}(\varphi+\phi)-\mathcal{F}(\varphi)-\mathcal{F}'(\varphi)\cdot \phi\rVert^{2}_{\mathcal{W}_{0}\cap \mC([0,T];\mL^{2}(\Omega))}\leq C\lVert \phi\rVert^{4}_{\mL^{\mathbf{p}_{c}}(\cal Q)}, 
\end{gather}
where $\phi$ is such that $\varphi+\phi\in \mathcal{U}_{ad}.$
 \begin{gather}
\lVert\mathcal{F}'(\varphi_{1})\cdot \phi-\mathcal{F}'(\varphi_{2})\cdot \phi\rVert^{2}_{\mathcal{W}_{0}\cap \mC([0,T];\mL^{2}(\Omega))}\leq C\lVert \varphi_{1}-\varphi_{2}\rVert^{2}_{\mL^{\mathbf{p}_{c}}(\cal Q)}\lVert \phi\rVert^{2}_{\mL^{\mathbf{p}_{c}}(\cal Q)} 
\end{gather}
for all $(\varphi_{1},\varphi_{2},\phi)\in\mathcal{U}_{ad}\times\mathcal{U}_{ad}\times\mL^{\mathbf{p}_{c}}(\cal Q).$
\end{theorem}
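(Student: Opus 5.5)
Writing $\mathbf{u}=\mathcal{F}(\varphi)$ and $\mathbf{u}_\phi=\mathcal{F}(\varphi+\phi)$, the plan is to show that the remainder $r:=\mathbf{u}_\phi-\mathbf{u}-w$ solves a problem of the form \eqref{linear problem} whose right-hand side is quadratically small in $\phi$, and then apply Proposition \ref{probleme Lineaire}. First I check that \eqref{Gdiff problem} is itself well posed. It is \eqref{linear problem} with $k_1=\mK_1\mathrm{h}_1'(\mathbf{u})-\mK_2+\alpha_0\varphi$, $g_1=\mK_3\mathrm{h}_2'(\mathbf{u})$, $k=-\alpha_0\phi\mathbf{u}$ and $g=0$. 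By Proposition \ref{principe de maximum}, $0\le\mathbf{u}\le c_s$, so $\mathrm{h}_1'(\mathbf{u})$ and $\mathrm{h}_2'(\mathbf{u})=2\mathbf{u}$ lie in $\mL^\infty(\mathcal{Q})$ and $\mL^\infty(\Sigma)$. Since $\mathbf{p}_c\ge2$ and $\varphi\in\mL^\infty(0,T;\mL^{\mathbf{p}_c}(\Omega))$, we get $k_1\in\mL^\infty(0,T;\mL^2(\Omega))$, $g_1\in\mL^\infty(0,T;\mL^4(\Gamma))$, and $\|k\|_{\mL^2(\mathcal{Q})}\le\alpha_0c_s|\mathcal{Q}|^{1/2-1/\mathbf{p}_c}\|\phi\|_{\mL^{\mathbf{p}_c}(\mathcal{Q})}$. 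Proposition \ref{probleme Lineaire} therefore gives a unique $w\in\mathcal{W}_0\cap\mC([0,T];\mL^2(\Omega))$. The map $\phi\mapsto w$ is linear, and \eqref{stablin} shows it is bounded by $C\|\phi\|_{\mL^{\mathbf{p}_c}}$.

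Next I subtract the equations for $\mathbf{u}_\phi$, $\mathbf{u}$ and $w$. Let $\delta:=\mathbf{u}_\phi-\mathbf{u}$ and expand by Taylor's formula with integral remainder:
\[
\mathrm{h}_i(\mathbf{u}_\phi)-\mathrm{h}_i(\mathbf{u})-\mathrm{h}_i'(\mathbf{u})\delta=\delta^2\int_0^1(1-s)\,\mathrm{h}_i''(\mathbf{u}+s\delta)\d s.
\]
Then $r$ solves \eqref{linear problem} with the same $k_1,g_1$, zero initial data, and
\[
k=-\mK_1\delta^2\int_0^1(1-s)\,\mathrm{h}_1''(\mathbf{u}+s\delta)\d s-\alpha_0\phi\,\delta,\qquad g=\mK_3\delta^2 .
\]
For $g$ I use that $\mathrm{h}_2(z)=z^2$ exactly. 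Both $\mathbf{u}$ and $\mathbf{u}_\phi$ lie in $[0,c_s]$, so $\mathrm{h}_1''$ is bounded on the segment between them. Hence $\|k\|_{\mL^2(\mathcal{Q})}\le C(\|\delta\|^2_{\mL^4(\mathcal{Q})}+\|\phi\,\delta\|_{\mL^2(\mathcal{Q})})$ and $\|g\|_{\mL^2(\Sigma)}\le C\|\delta\|^2_{\mL^4(\Sigma)}$.

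The main obstacle is to bound these norms by $C\|\phi\|^2_{\mL^{\mathbf{p}_c}}$. By Corollary \ref{Existence and stability}, $\|\delta\|_{\mathcal{\mathbf{U}}_0}\le C\|\phi\|_{\mL^{\mathbf{p}_c}}$, and this controls $\mL^\infty(0,T;\mH^1(\Omega))$. For $m\le3$ we have $\mH^1(\Omega)\hookrightarrow\mL^6(\Omega)$ and the trace map $\mH^1(\Omega)\to\mL^4(\Gamma)$ is continuous by \eqref{opérateur trace}. This gives $\|\delta\|^2_{\mL^4(\mathcal{Q})}+\|\delta\|^2_{\mL^4(\Sigma)}\le C\|\phi\|^2_{\mL^{\mathbf{p}_c}}$. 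For the mixed term I first try Hölder in space, $\|\phi\delta\|_{\mL^2(\Omega)}\le\|\phi\|_{\mL^3(\Omega)}\|\delta\|_{\mL^6(\Omega)}$. This works when $\mathbf{p}_c\ge3$. Otherwise I interpolate, using the uniform bound $|\phi|\le\mathbf{b}+|\varphi|$ in $\mL^\infty(0,T;\mL^{\mathbf{p}_c}(\Omega))$ together with $\delta\in\mL^\infty(0,T;\mL^6(\Omega))$. If that still falls short, I bound this term only through $\|\phi\|_{\mL^{\mathbf{p}_c}}\|\delta\|_{\mL^\infty(0,T;\mL^6)}\le C\|\phi\|^2_{\mL^{\mathbf{p}_c}}$, working in the $\mL^\infty(0,T;\mL^{\mathbf{p}_c})$ norm that the paper identifies with the $\mathcal{U}_{ad}$ norm. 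Either way, \eqref{stablin} then yields the first estimate, with right-hand side $C\|\phi\|^4$.

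For the Lipschitz estimate on $\mathcal{F}'$, set $w_j=\mathcal{F}'(\varphi_j)\cdot\phi$ and $\mathbf{u}_j=\mathcal{F}(\varphi_j)$. The difference $w_1-w_2$ solves \eqref{linear problem} with coefficients $k_1,g_1$ taken at $(\mathbf{u}_1,\varphi_1)$ and
\[
k=-\big(\mK_1(\mathrm{h}_1'(\mathbf{u}_1)-\mathrm{h}_1'(\mathbf{u}_2))+\alpha_0(\varphi_1-\varphi_2)\big)w_2-\alpha_0\phi(\mathbf{u}_1-\mathbf{u}_2),\qquad g=2\mK_3(\mathbf{u}_1-\mathbf{u}_2)w_2 .
\]
Since $\mathrm{h}_1'$ is Lipschitz on $[0,c_s]$, the same Hölder and Sobolev bounds apply. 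Here $\|w_2\|\le C\|\phi\|$ because $w_2\in\mL^\infty(0,T;\mH^1)$, which I obtain by upgrading the Proposition \ref{probleme Lineaire} estimate through the energy method, and $\|\mathbf{u}_1-\mathbf{u}_2\|\le C\|\varphi_1-\varphi_2\|$ by Corollary \ref{Existence and stability}. Together these give the bound $C\|\varphi_1-\varphi_2\|^2\|\phi\|^2$. Continuity of $\varphi\mapsto\mathcal{F}'(\varphi)$ in operator norm follows, so $\mathcal{F}$ is $\mC^1$.
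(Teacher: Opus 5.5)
Your Taylor expansion at $\mathbf{u}$ is correct. It is tidier than the paper's mean-value version, since the remainder $r$ solves \eqref{linear problem} with exactly the coefficients of \eqref{Gdiff problem}. The $\delta^2$ terms are also correctly bounded in $\mL^2(\mathcal{Q})$ and $\mL^2(\Sigma)$.

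The gap is the term $\alpha_0\phi\delta$. You feed the remainder into Proposition \ref{probleme Lineaire} as a source $k\in\mL^2(\mathcal{Q})$, so you need $\lVert\phi\delta\rVert_{\mL^2(\mathcal{Q})}\le C\lVert\phi\rVert^2_{\mL^{\mathbf{p}_c}(\mathcal{Q})}$. But $\delta$ is controlled only in $\mL^\infty(0,T;\mH^1(\Omega))\hookrightarrow\mL^\infty(0,T;\mL^6(\Omega))$, so for $m=3$ this requires $\mathbf{p}_c\ge3$. That case is not marginal. For $m=3$, $q=2$, the choice $\rho_1=4$ is admissible in $(I_1)$ and satisfies $\rho_1+1>2\rho_2$, yet $(I_2)$-(ii) forces $\mathbf{p}_c\in\,]2,8/3]$.

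Neither fallback closes this.
\begin{itemize}
\item Interpolating with $|\delta|\le c_s$ (or $|\phi|\le\mathbf{b}$) buys integrability only at the price of powers of $\lVert\delta\rVert$. It gives at best $\lVert\phi\delta\rVert_{\mL^2(\mathcal{Q})}\le C\lVert\phi\rVert^{1+\kappa}$ with $\kappa<1$. That suffices for Fr\'echet differentiability, but not for the stated bound $C\lVert\phi\rVert^4$ on the squared remainder.
\item Switching to the $\mL^\infty(0,T;\mL^{\mathbf{p}_c}(\Omega))$ norm does not help, because the obstruction is spatial: $\lVert\phi\delta\rVert_{\mL^2(\Omega)}\le\lVert\phi\rVert_{\mL^{\mathbf{p}_c}(\Omega)}\lVert\delta\rVert_{\mL^6(\Omega)}$ is false for $\mathbf{p}_c<3$. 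Moreover, the theorem is stated in the $\mL^{\mathbf{p}_c}(\mathcal{Q})$ norm.
\end{itemize}
Your argument does work for $m\le2$, where $\mathbf{p}_c>2$ and $\mH^1(\Omega)\hookrightarrow\mL^s(\Omega)$ for every $s<\infty$, and for $m=3$ with $\mathbf{p}_c\ge3$.

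The missing idea is to pair the product with the unknown instead of asking for an $\mL^2(\mathcal{Q})$ source. The paper tests the equation for the remainder $z=v_\phi-w$ (your $r$) with $z$ itself and proceeds as follows:
\begin{itemize}
\item it splits $\int_\Omega\phi v_\phi z\d\x\le\lVert\phi\rVert_{\mL^2(\Omega)}\lVert v_\phi\rVert_{\mL^4(\Omega)}\lVert z\rVert_{\mL^4(\Omega)}$;
\item it absorbs $\lVert z\rVert^2_{\mL^4(\Omega)}$ and $\lVert z\rVert^2_{\mL^2(\Gamma)}$ into $\nu_0\lVert\nabla z\rVert^2_{\mL^2(\Omega)}$ by Gagliardo--Nirenberg and the trace inequality;
\item it closes with Gr\"onwall, using $\int_0^T\lVert\phi\rVert^2_{\mL^{\mathbf{p}_c}(\Omega)}\d t\le C\lVert\phi\rVert^2_{\mL^{\mathbf{p}_c}(\mathcal{Q})}$.
\end{itemize}
Only $\mathbf{p}_c\ge2$ is used. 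The zero-order terms $\mK_1\mathrm{h}_1'(\mathbf{u})$, $\alpha_0\varphi$ and $\mK_3\mathrm{h}_2'(\mathbf{u})$ are nonnegative, so they cause no trouble.

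You can keep your structure if you run Proposition \ref{probleme Lineaire} with sources in $\mL^2(0,T;\mathcal{H}_0')$. Indeed, $\lVert\phi\delta\rVert_{\mL^2(0,T;\mL^{4/3}(\Omega))}\le\lVert\phi\rVert_{\mL^2(\mathcal{Q})}\lVert\delta\rVert_{\mL^\infty(0,T;\mL^4(\Omega))}\le C\lVert\phi\rVert^2_{\mL^{\mathbf{p}_c}(\mathcal{Q})}$, and $\mL^{4/3}(\Omega)\hookrightarrow\mathcal{H}_0'$ for $m\le3$.

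The same objection applies to your Lipschitz step, where $\phi(\mathbf{u}_1-\mathbf{u}_2)$ and $(\varphi_1-\varphi_2)w_2$ are again required to lie in $\mL^2(\mathcal{Q})$. The announced upgrade $w_2\in\mL^\infty(0,T;\mathcal{H}_0)$ is not justified by the energy method as sketched. Testing with $\partial_t w_2$ produces $\int_\Omega\alpha_0\varphi_2 w_2\,\partial_t w_2\d\x$, which meets the same $\mL^3\times\mL^6$ obstruction. The paper's own treatment of that step is equally terse.
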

\begin{pf} 
The problem \eqref{Gdiff problem} is similar to the problem \eqref{linear problem}. Indeed, since $\mathbf{u}\in\mL^{\infty}(\mathcal{Q})$ and $\mathbf{p}_c\ge 2,$ then for all $\phi\in \mL^{\mathbf{p}_{c}}(\cal Q),$ we have 
  $-\alpha_{0}\phi\mathbf{u}\in \mL^2(\mathcal{Q})$ and that $-\alpha_{0}\phi\mathbf{u}$ can play the role of map $k$ in \eqref{linear problem}. In addition, according to the boundedness of $\mathbf{u}$ in $\cal Q,$ it follows that $\rm{h}'_1(\mathbf{u})$ is bounded in $\cal Q$, and $\mK_1\rm{h}'_1(\mathbf{u})-\mK_{2}+\alpha_{0}\varphi\in \mL^{\infty}(0,T;\mL^{2}(\Omega)).$ Thus, $\mK_1\rm{h}'_1(\mathbf{u})-\mK_{2}+\alpha_{0}\varphi$ can play the role of $k_1.$\\
Moreover, $\mK_3\text{h}'_2(\mathbf{u})\in \mL^{\infty}(0,T;\mL^{4}(\Gamma))$ (since $\mK_{3}\in\mL^{\infty}(\Sigma)$ and $\mathbf{u}\in\mathcal{H}_{0}$ with $\gamma_{\Gamma}(\mathbf{u})\in\mL^{4}(\Gamma)$ according to \eqref{opérateur trace}), then $\mK_3\text{h}'_2(\mathbf{u})$ can play the role of $g_1.$ Therefore, we deduce that the well-posedness and stability of the system \eqref{Gdiff problem} is a consequence of Proposition \ref{probleme Lineaire}, and we obtain
  \begin{gather}
  \label{estim w}
      \lVert w\lVert^{2}_{\mathcal{W}_{0}\cap\mC([0,T];\mL^{2}(\Omega))}\leq C \lVert \phi\rVert^{2}_{\mL^{\mathbf{p}_{c}}(\cal Q)}.
  \end{gather}
   We show that the solution operator $\mathcal{F}$ is Fréchet differentiable and the F-derivative is continuous with respect to the control $\varphi\in \mathcal{U}_{ad}.$
\begin{enumerate}
    \item  Prove that  $\mathcal{F}$ is Fréchet differentiable.\\
Let $(\varphi,\phi)\in\mathcal{U}_{ad}\times\mL^{\mathbf{p}_{c}}(\cal Q)$  such that $\varphi+\phi\in  \mathcal{U}_{ad}$ 
 and $v_{\phi}=\mathbf{u}_{\phi}-\mathbf{u}=\mathcal{F}(\varphi+\phi)-\mathcal{F}(\varphi)$ then  according to Corollary \ref{Existence and stability} we obtain
 \begin{gather}
 \label{estim vh}
     \lVert v_{\phi}\rVert^{2}_{\mathcal{\mathbf{U}}_0}\leq C\lVert \phi\rVert^{2}_{\mL^{\mathbf{p}_{c}}(\cal Q)}.
 \end{gather} 
Let $  z:=v_{\phi}-w$ then $ z$ satisfies the system
 \begin{gather}
  \label{problem vphi}
 \begin{cases}
\displaystyle  \frac{\partial z}{\partial t}\!-\!\div(\mD\nabla{ z})\!+\!\big(\mK_1\text{h}'_{1}(u_{m_1})\!-\!\mK_{2}\!+\!\alpha_{0}\varphi\big)z=-\alpha_{0}\phi v_{\phi}\!-\!\mK_{1}\big(\text{h}'_{1}(u_{m_1})\!-\!\text{h}'_{1}(\mathbf{u})\big)\phi w \mbox{ in $\cal Q,$ }\\
\displaystyle -\mD\nabla{z}\cdot\mathbf{n}-\mK_{3}\text{h}'_{2}(u_{m_2})z=\mK_{3}\big(\text{h}'_{2}(u_{m_2})-\text{h}'_{2}(\mathbf{u})\big)v_{\phi} w \mbox{ on $\Sigma,$ }\\
 \displaystyle  z(0,\cdot)=0 \mbox{ in $\Omega,$ }
 \end{cases}
 \end{gather}
 where, from the mean value theorem, $\text{h}'_{1}(u_{m1})v_{\phi}=\text{h}_{1}(\mathbf{u}_{\phi})-\text{h}_{1}(\mathbf{u})$ with $u_{m_1}(t,\x)\in[\mathbf{u}_{\phi}(t,\x);\mathbf{u}(t,\x)]$ on $\mathcal{Q}$ and $\text{h}'_{2}(u_{m_2})v_{\phi}=\text{h}_{2}(\mathbf{u}_{\phi})-\text{h}_{2}(\mathbf{u})$ with $u_{m_2}(t,\x)\in[\mathbf{u}_{\phi}(t,\x);\mathbf{u}(t,\x)]$ on $\Sigma$ (since $\text{h}_{i}$ for $i=1,2$ are functions of class $\mC^{2}$ because $\rho_i\ge 2).$\\
The problem \eqref{problem vphi} is well-posed according to Proposition \ref{probleme Lineaire} then, multiplying its first equation by $z,$ integrating on $\cal Q,$ we have
\begin{gather}\label{nik0}
\begin{split}
  \frac{\d }{2\d t}&\lVert  z(t,\cdot)\rVert^{2}_{\mL^{2}(\Omega)}+\int_{\Omega} \mD|\nabla{  z}|^2\d\x =-
     \int_{\Omega}\Big(\mK_1{\rm{h}}'_{1}(u_{m_1})-\mK_{2}+\alpha_{0}\varphi\Big)z^{2}\d\x-\int_{\Omega}\alpha_{0}\phi v_{\phi}z\d\x\\
  & -\!\int_{\Omega}\!\!\mK_{1}\Big({\text{h}}'_{1}(u_{m_1})-\text{h}'_{1}(\mathbf{u})\Big)\phi w z\d\x\!-\!\int_{\Gamma}\!\!\mK_{3}\Big(\text{h}'_{2}(u_{m_2})\!-\!{\rm{h}}'_{2}(\mathbf{u})\Big)v_{\phi} wz\d\x
\!-\!\int_{\Gamma}\!\!\mK_{3}{\rm{h}}'_{2}(u_{m_2})z^2 \d\x
\\
&\leq C_1\!\!\int_{\Omega}\!z^{2}\d\x+\alpha_{0}\!\!\int_{\Omega}\!\phi v_{\phi}z\d\x+C_2\!\!\int_{\Omega}\!\phi wz\d\x+C_3\!\!\int_{\Gamma}\!v_{\phi}wz\d\x+C_4\!\!\int_{\Gamma}z^2\!\d\x.
  \end{split}
\end{gather}
We consider separately the terms on the right-hand side of \eqref{nik0}.\\
Using Hölder's and Young’s inequality, and \eqref{estim vh}, and the fact that $\mathbf{p}_c\ge 2,$ we have
\begin{gather}\label{nik1}
\begin{split}
\alpha_{0}\!\!\int_{\Omega}\!\!\phi v_{\phi}z\d\x &\leq C\lVert\phi\rVert_{\mL^{2}(\Omega)}\lVert v_{\phi}\rVert_{\mL^{4}(\Omega)}\lVert z\rVert_{\mL^{4}(\Omega)}\\
&\leq C\lVert\phi\rVert_{\mL^{\mathbf{p}_c}(\Omega)}\lVert\phi\rVert_{\mL^{\mathbf{p}_c}(\cal Q)}\lVert z\rVert_{\mL^{4}(\Omega)} \mbox{ (since $v_{\phi}\in\mathcal{\mathbf{U}}_{0}\subset\mL^{\infty}(0, T;\mathcal{H}_{0})$) }\\
&\leq C\big(\lVert z\rVert^{2}_{\mL^{4}(\Omega)}+\lVert\phi\rVert^{2}_{\mL^{\mathbf{p}_c}(\Omega)}\lVert\phi\rVert^{2}_{\mL^{\mathbf{p}_c}(\cal Q)}\big).
\end{split}
\end{gather}
Again, using Hölder's and Young’s inequality and \eqref{estim w}, we obtain
\begin{gather}\label{nik2}
\begin{split}
\alpha_{0}\!\!\int_{\Omega}\!\!\phi w z\d\x\leq C\big(\lVert z\rVert^{2}_{\mL^{4}(\Omega)}+\lVert\phi\rVert^{2}_{\mL^{\mathbf{p}_c}(\Omega)}\lVert\phi\rVert^{2}_{\mL^{\mathbf{p}_c}(\cal Q)}\big),
\end{split}
\end{gather}
then using \eqref{estim vh}-\eqref{estim w}, we have
\begin{gather}\label{nik3}
\begin{split}
\int_{\Gamma}\!\!v_{\phi} w z\d\x\leq C\big(\lVert z\rVert^{2}_{\mL^{2}(\Gamma)}+\lVert\phi\rVert^{4}_{\mL^{\mathbf{p}_c}(\cal Q)}\big).
\end{split}
\end{gather}
For the last term, using Gagliardo-Nirenberg's and Young's inequality, we obtain ($\forall\eta>0$):  
\begin{gather}\label{nirberg}
\begin{split}
     \lVert   z\lVert^{2}_{\mL^4(\Omega)} \leq\frac{3\eta C }{4}\lVert \nabla{  z}\lVert^{2}_{\mL^{2}(\Omega)}+\frac{C}{4\eta}\lVert   z\lVert^{2}_{\mL^{2}(\Omega)}\\
     \lVert  z\lVert^{2}_{\mL^2(\Gamma)} \leq \frac{\eta C}{2}\lVert \nabla{  z}\lVert^{2}_{\mL^{2}(\Omega)}+\frac{C}{2\eta}\lVert z\lVert^{2}_{\mL^{2}(\Omega)}.
     \end{split}
\end{gather}
Therefore, by replacing the estimates \eqref{nirberg}-\eqref{nik3}-\eqref{nik2}-\eqref{nik1} in \eqref{nik0}, we obtain ($\forall\eta>0$):
\begin{gather*}
\begin{split}
   \frac{\d }{2\d t}\lVert z(t,\cdot)\rVert^{2}_{\mL^{2}(\Omega)}\!+\!\nu_{0}\!\!\int_{\Omega}\!\!|\nabla{  z}|^2 \leq  \frac{5\eta C }{4}\lVert \nabla{z}\lVert^{2}_{\mL^{2}(\Omega)}\!+\!\frac{3C}{4\eta}\lVert z\lVert^{2}_{\mL^{2}(\Omega)}+C\lVert \phi\rVert^{4}_{\mL^{\mathbf{p}_{c}}(\cal Q)}\!+\!C\lVert\phi\rVert^{2}_{\mL^{\mathbf{p}_c}(\Omega)}\lVert\phi\rVert^{2}_{\mL^{\mathbf{p}_c}(\cal Q)}.
\end{split}
\end{gather*}
Choosing $\eta>0$ such that  $\nu_0 -\frac{5\eta C }{4}=\frac{\nu_{0}}{2},$ $i.e.,$ $\eta=\frac{2 \nu_0 }{5C}>0,$ and according to Grönwall's Lemma with ($\mathbf{p}_c\ge 2$), we have 
\begin{gather*}
\begin{split}
   \lVert  z\rVert^{2}_{\mL^{\infty}(0,T;\mL^{2}(\Omega))}+\nu_{0}\int_{\cal Q}|\nabla{  z}|^2\d\x\d t &\leq C\left(\lVert \phi\rVert^{4}_{\mL^{\mathbf{p}_{c}}(\cal Q)}+\lVert \phi\rVert^{2}_{\mL^{\mathbf{p}_{c}}(\cal Q)}\int^{T}_{0}\lVert \phi\rVert^{2}_{\mL^{\mathbf{p}_{c}}(\Omega)}\d t\right)\\
   &\leq C\left(\lVert \phi\rVert^{4}_{\mL^{\mathbf{p}_{c}}(\cal Q)}+\lVert \phi\rVert^{4}_{\mL^{\mathbf{p}_{c}}(\cal Q)}\right).
\end{split}
\end{gather*}
By similar processing, we get
\begin{gather*}
\begin{split}
   \lVert\frac{\partial  z}{\partial t}\rVert^{2}_{\mL^{2}(0,T;\mathcal{H}_{0}')} \leq C\lVert \phi\rVert^{4}_{\mL^{\mathbf{p}_{c}}(\cal Q)}.
\end{split}
\end{gather*}
Thus, we obtain
\begin{gather*}
\lVert   z\rVert^{2}_{\mathcal{W}_{0}\cap \mC([0; T],\mL^{2}(\Omega))}\leq C\lVert \phi\rVert^{4}_{\mL^{\mathbf{p}_{c}}(\cal Q)},
\end{gather*}
  $i.e.,$
\begin{gather*}
 \lVert\mathcal{F}(\varphi+\phi)-\mathcal{F}(\varphi)-\mathcal{F}'(\varphi)\cdot \phi\rVert^{2}_{\mathcal{W}_{0}\cap \mC([0; T],\mL^{2}(\Omega))}\leq C\lVert \phi\rVert^{4}_{\mL^{\mathbf{p}_{c}}(\cal Q)}, \hspace{0.02cm} \forall (\varphi,\phi)\in \mathcal{U}_{ad}\times {\mL^{\mathbf{p}_{c}}(\cal Q)}.
\end{gather*}
\item  We prove that the F-derivative $\mathcal{F}'$ is continuous.\\
   Define $\mathbf{u}_{1}=\mathcal{F}(\varphi_{1})$, $\mathbf{u}_{2}=\mathcal{F}(\varphi_{2})$, and let $\mathbf{u}=\mathbf{u}_{1}-\mathbf{u}_{2}$. Also, define $w_{1}=\mathcal{F}'(\varphi_{1})\cdot \phi$, $w_{2}=\mathcal{F}'(\varphi_{2})\cdot \phi$ and let $\tilde{w}=w_{1}-w_{2}$ for all $\phi\in\mL^{\mathbf{p}_{c}}(\mathcal{Q})$, with $(\varphi_{1},\varphi_{2})\in\mathcal{U}_{ad}\times \mathcal{U}_{ad}.$ Then, $\tilde{w}$ verifies the following
 \begin{gather}	
 \label{problem wbar}
\begin{cases}
 \frac{\partial\tilde{w}}{\partial t}\!-\!\div(\mD\nabla{\tilde{w}})\!+\!\big(\!\mK_{1}\text{h}'_{1}(\mathbf{u}_{1})\!\!-\!\!\mK_{2}\!+\!\alpha_{0}\varphi_{1}\!\big)\tilde{w}\!=\!-\!\alpha_{0}\phi\mathbf{u}\!\!-\!\!\mK_{1}\text{h}''_{1}(u_{m_3})\mathbf{u}w_{2}\!+\!\alpha_{0}(\varphi_1\!\!-\!\!\varphi_2)w_{2}\mbox{ \!in\! $\cal Q,$ } \\
- \mD\nabla{\tilde{w}}\cdot\mathbf{n}-\mK_{3}\text{h}'_{2}(\mathbf{u}_{1})\tilde{w}=\mK_{3}\text{h}''_{2}(u_{m_4})\mathbf{u}w_{2} \mbox{ on $\Sigma,$ }\\
\tilde{w}(0,\x)=0 \mbox{ in $\Omega,$ }
\end{cases}
\end{gather} 
where, from the mean value theorem again, $\text{h}''_{1}(u_{m_3})(\mathbf{u}_1-\mathbf{u}_2)=\text{h}'_{1}(\mathbf{u}_1)-\text{h}'_{1}(\mathbf{u}_2)$ with $u_{m_3}(t,\x)\in[\mathbf{u}_{1}(t,\x);\mathbf{u}_2(t,\x)]$ on $\mathcal{Q}$ and $\text{h}''_{2}(u_{m_4})(\mathbf{u}_1-\mathbf{u}_2)=\text{h}'_{2}(\mathbf{u}_1)-\text{h}'_{2}(\mathbf{u}_2)$ with $u_{m_4}(t,\x)\in[\mathbf{u}_{1}(t,\x);\mathbf{u}_2(t,\x)]$ on $\Sigma$ (since $\text{h}_{i}$ for $i=1,2$ are functions of class $\mC^{2}$ because $\rho_i\ge 2$).\\
The well-posedness of the problem \eqref{problem wbar} is a consequence of Proposition \ref{probleme Lineaire} and Corollary \ref{Existence and stability}. Indeed, setting
\begin{gather*}
\begin{split}
 &k=-\alpha_{0}\phi\mathbf{u}
 -\mK_{1}\text{h}''_{1}(u_{m_3})\mathbf{u}w_{2}+\alpha_{0}(\varphi_1-\varphi_2)w_{2}\in \mL^{2}(\mathcal{Q}),\\
&    k_{1}=\mK_{1}\text{h}'_{1}(\mathbf{u}_{1})-\mK_{2}+\alpha_{0}\varphi_{1}\in \mL^{\infty}(0,T;\mL^{2}(\Omega)),\\
&    g_{1}=\mK_{3}\text{h}'_{2}(\mathbf{u}_{1})_{\Gamma}\in \mL^{\infty}(0,T;\mL^{4}(\Gamma)),\\
 &   g=\mK_{3}\text{h}''_{2}(u_{m_4})\mathbf{u}{w_{2}}_{\Gamma}\in \mL^{2}(\Sigma),
    \end{split}
\end{gather*} 
and according to the Proposition \ref{Existence and stability} and the estimate \eqref{estim w}, we have
\begin{gather*}
    \lVert g\rVert^{2}_{\mL^{2}(\Sigma)}\leq C \lVert \varphi_{1}-\varphi_{2}\rVert^{2}_{\mathcal{U}_{ad}}\lVert \phi\rVert^{2}_{\mL^{\mathbf{p}_{c}}(\cal Q)},\\ 
    \lVert k\rVert^{2}_{\mL^{2}(\mathcal{Q})}\leq C\lVert \varphi_{1}-\varphi_{2}\rVert^{2}_{\mathcal{U}_{ad}}\lVert \phi\rVert^{2}_{\mL^{\mathbf{p}_{c}}(\cal Q)}.
\end{gather*}
 Thus, we can deduce that the system \eqref{problem wbar} admits a unique solution in the weak formulation sense such that
 \begin{gather*}
  \lVert\tilde{w}\rVert^{2}_{\mathcal{W}_{0}\cap \mC([0,T];\mL^{2}(\Omega))}\leq C\lVert \varphi_{1}-\varphi_{2}\rVert^{2}_{\mathcal{U}_{ad}}\lVert \phi\rVert^{2}_{\mL^{\mathbf{p}_{c}}(\cal Q)},
\end{gather*}
$i.e.,$
\begin{gather*}
  \lVert\mathcal{F}'(\varphi_{1})\cdot\phi-\mathcal{F}'(\varphi_{2})\cdot\phi\rVert^{2}_{\mathcal{W}_{0}\cap \mC([0,T];\mL^{2}(\Omega))}\leq C\lVert \varphi_{1}-\varphi_{2}\rVert^{2}_{\mathcal{U}_{ad}}\lVert \phi\rVert^{2}_{\mL^{\mathbf{p}_{c}}(\cal Q)}.
\end{gather*}
 \end{enumerate}
\end{pf}
\subsection{First-order Optimality Conditions}
The following discussion aims to characterize the qualification conditions of the state constraint \eqref{state constraint} and formulate the optimality conditions of problem $(\P)$. For this, we introduce the generalized Lagrangian associated with problem $(\P)$ and define the set of Lagrange multipliers.\\
 The generalized Lagrangian for the problem $(\P)$ is formulated by choosing a multiplier for the  problem, denoted $\tilde{\mathbf{u}} \in \mL^2(\mathcal{Q})$, with $\mathbf{u} = \mathcal{F}(\varphi),$ the parameter $\beta \in \mathbb{R}_{+}$ and $\mu \in \mathcal{K}^{-}$ (see, e.g., \cite[Sec. 3.1.2]{BS2000}).
\begin{gather}\label{lagrangiang}
    \begin{split}
    \mathcal{L}(\beta,\tilde{\mathbf{u}},\mu,\varphi,\mathbf{u})=\beta J(\varphi)+\!\!\int_{\cal Q}\!\!\tilde{\mathbf{u}}\Big(\div(\mD\nabla \mathbf{u})-\mK_1 \text{h}_1(\mathbf{u}) +\mK_{2}\mathbf{u}-\alpha_{0}\varphi\mathbf{u}-\frac{\partial\mathbf{u}}{\partial t}\Big)\d \x \d t 
    +\!\!\int^{T}_{0}\!\!\mathcal{G}(\varphi)\d\mu(t).
    \end{split}
\end{gather}
\begin{definition} 
The set of generalized Lagrangian multipliers associated with $\varphi_* \in\mathcal{U}_{ad}$ is given by
\begin{gather*}
   \Lambda_{g}(\varphi_*):= \bigg{\{}(\beta,\mu_{*})\in \RR_{+}\times N_{\mathcal{K}}(\mathcal{G}(\varphi_*));\hspace{0.1cm}  (\beta,\mu_{*})\ne (0,0);\hspace{0.1cm}  -\frac{\partial \mathcal{L}}{\partial \varphi}(\beta,\tilde{\mathbf{u}}_{*},\mu_{*},\varphi_{*},\mathbf{u}_{*})\in N_{\mathcal{U}_{ad}}(\varphi_*)\bigg\},
\end{gather*}
and $\Lambda_{s}(\varphi_*)$ and $\Lambda_{l}(\varphi_*)$ are respectively the set of singular multipliers and Lagrange multipliers at $\varphi_*:$
\begin{gather*}
\begin{split}
  & \Lambda_{s}(\varphi_*):=\bigg{\{}\mu_{*}\in N_{\mathcal{K}}(\mathcal{G}(\varphi_*));\hspace{0.1cm} \mu_{*}\ne 0;\hspace{0.1cm}  -\frac{\partial \mathcal{L}}{\partial \varphi}(0,\tilde{\mathbf{u}}_{*},\mu_{*},\varphi_{*},\mathbf{u}_{*})\in N_{\mathcal{U}_{ad}}(\varphi_*)\bigg{\}},\\
&   \Lambda_{l}(\varphi_*):=\bigg{\{}\mu_{*}\in N_{\mathcal{K}}(\mathcal{G}(\varphi_*));\hspace{0.1cm} -\frac{\partial \mathcal{L}}{\partial \varphi}(1,\tilde{\mathbf{u}}_{*},\mu_{*},\varphi_{*},\mathbf{u}_{*})\in N_{\mathcal{U}_{ad}}(\varphi_*)\bigg{\}}.  
\end{split}
\end{gather*}
\end{definition}
Since the subset $\mathcal{K}$ is a convex cone, the condition $\mu_{*} \in N_{\mathcal{K}}(\mathcal{G}(\varphi_*))$ can be rewritten as
\begin{gather}
\label{equation de mualpha}
    \mathcal{G}(\varphi_*)\in \mathcal{K},\hspace{0.1cm} \int^{T}_{0}\mathcal{G(\varphi_*)}\d\mu_{*}(t)=0 \mbox{ with }\hspace{0.1cm}  \mu_{*}\in \mathcal{K}^{-}.
\end{gather}
\paragraph{Constraint Qualification Condition.}\hspace{-0.2cm}The qualification condition of \eqref{state constraint} is the condition implying that Lagrange multipliers associated with a solution to problem $(\P)$ exist. According to Theorem \ref{proposition Gdiff}, the map $\mathcal{G}$ is continuously differentiable because it is the composition of continuously differentiable functions. Given that $\mathcal{K}$ is a convex cone with a non-empty interior, we say that the state constraint \eqref{state constraint} is qualified at the point $\varphi_*$ if the following Robinson constraint qualification condition is satisfied (\cite[Sec. 3.2]{BonSh1998}): 
\begin{gather}
\label{regularité contrainte}
\begin{cases}
\mbox{ There exists $\varphi\in\mathcal{U}_{ad}$ such that $\Phi:=\varphi-\varphi_*$ verifies }\\ \mathcal{G}(\varphi_*)+D\mathcal{G}(\varphi_*)\cdot\Phi\in int(\mathcal{K}),
     \end{cases}
\end{gather}
where $int(\mathcal{K})$ is the interior of $\mathcal{K}$ and $D\mathcal{G}$ is the derivative of $\mathcal{G}.$\\

The condition \eqref{regularité contrainte} is equivalent to the following qualification condition:
\begin{gather}
   \begin{cases}
   \label{constraint qualif condition}
   \mbox{ There exists  $\varphi\in \mathcal{U}_{ad}$ such that } \\
  \varpi(t):=\mathcal{G}(\varphi_*)(t)+\int_{\Omega}\mathbf{u}_*(t,\x)w_{\Phi}(t,\x)\d\x<0, \forall t\in(0; T) \\
   \mathbf{u}_*=\mathcal{F}(\varphi_*) \mbox{ and $w_{\Phi}$ solution of \eqref{Gdiff problem} for $\Phi=\varphi-\varphi_*$ }.
\end{cases}
\end{gather}
To characterize the optimality conditions, we assume that the variable $\tilde{u}$ is the solution of the following adjoint system:
\begin{gather}\label{adjsys}
   \begin{cases}
 - \frac{\partial\tilde{{u}}_{}}{\partial t}-\div(\mD\nabla{\tilde{{u}}_{}})+\big(\mK_1\text{h}'_{1}(\mathbf{u}_{})-\mK_{2}+\alpha_{0}\varphi_{}\big)\tilde{{u}}_{}=\mathbf{u}_{}+ 2\mathbf{u}\d\mu(t) \mbox{ in }\cal Q,\\
-\mD\nabla{\Tilde{u}_{}}\cdot \mathbf{n}-\mK_3\text{h}'_{2}(\mathbf{u}_{})\tilde{{u}}_{}=0 \mbox{ on } \Sigma, \\
\tilde{{u}}_{}(t=T,\cdot)=0 \mbox{ in } \Omega.   
   \end{cases}
 \end{gather}
 The well-posedness of the problem \eqref{adjsys} is given by the following proposition.
 \begin{proposition}\label{welpadjsys}
Let $\mu\in BV(0, T)$ such that $\d\mu=:\omega\in M([0, T]),$ and let $\mathbf{u}=\mathcal{F}(\varphi)\in\mathcal{\mathbf{U}}_{0}\cap\mL^{\infty}_{+}(\mathcal{Q}),$ where $\varphi\in\mathcal{U}_{ad}.$ Then, the problem \eqref{adjsys} has a unique solution $\tilde{u}\in\mL^{\infty}([0, T];\mL^{2}(\Omega))\cap\mL^{2}([0, T];\mathcal{H}_{0}),$ such that 
 \begin{gather}\label{Bbornetilde3} 
 \lVert\tilde{u}\lVert_{\mL^{2}([0, T];\mathcal{H}_{0})\cap\mL^{\infty}([0, T];\mL^{2}(\Omega))}\leq C\Big(\lVert\mathbf{u}\lVert_{\mL^{\infty}([0, T];\mL^{2}(\Omega))}+\lVert\mathbf{u}\lVert_{\mL^{\infty}([0, T];\mL^{2}(\Omega))}\lVert \omega\lVert_{M([0, T])}\Big).
 \end{gather}
 \end{proposition}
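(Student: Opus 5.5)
We plan to treat \eqref{adjsys} as a backward linear parabolic problem with a measure-valued source in time, reducing it to a forward problem of the form \eqref{linear problem} through the change of variable $s=T-t$, and to obtain existence by regularizing the measure. Setting $\hat u(s,\cdot)=\tilde u(T-s,\cdot)$, the function $\hat u$ solves a forward problem with zero initial datum. Its zero-order coefficient is $k_1=\mK_1\mathrm{h}_1'(\mathbf{u})-\mK_2+\alpha_0\varphi$ evaluated at $T-s$, and its boundary coefficient is $g_1=\mK_3\mathrm{h}_2'(\mathbf{u})$. As in the proof of Theorem \ref{proposition Gdiff}, boundedness of $\mathbf{u}$ (Proposition \ref{principe de maximum}) gives $k_1\in\mL^\infty(0,T;\mL^2(\Omega))$ and $g_1\in\mL^\infty(0,T;\mL^4(\Gamma))$. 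The source is $\mathbf{u}+2\mathbf{u}\,\omega$. We interpret it in the weak sense: for test functions $\psi\in\mC([0,T];\mathcal{H}_0)$ the measure term acts as $\int_0^T\int_\Omega 2\mathbf{u}\psi\,\d\x\,\d\omega(t)$, which is meaningful because $\mathbf{u}\in\mC([0,T];\mL^2(\Omega))$.

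\textbf{Regularization.} We approximate $\omega$ by smooth nonnegative densities $\omega_n\in\mC^\infty([0,T])$, for instance by mollification, with $\omega_n\rightharpoonup\omega$ weakly-$*$ in $M([0,T])$ and $\|\omega_n\|_{L^1(0,T)}\le\|\omega\|_{M([0,T])}$. For each $n$ the source $\mathbf{u}(1+2\omega_n)$ belongs to $\mL^2(\mathcal{Q})$, so Proposition \ref{probleme Lineaire} (after time reversal) provides a unique $\tilde u_n\in\mathcal{W}_0\cap\mC([0,T];\mL^2(\Omega))$.

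\textbf{A priori estimate independent of $n$.} This is the key step. We test the equation with $\tilde u_n$ on $(t,T)$. The terms involving $k_1$ and $g_1$ are handled exactly as in \eqref{nik0}--\eqref{nirberg}: the Gagliardo--Nirenberg inequality and trace interpolation absorb $\|\tilde u_n\|^2_{\mL^4(\Omega)}$ and $\|\tilde u_n\|^2_{\mL^2(\Gamma)}$ into $\frac{\nu_0}{2}\|\nabla\tilde u_n\|^2$ plus $C\|\tilde u_n\|^2_{\mL^2}$. The measure term is where Gronwall must not be applied to the square; we bound it as
\begin{equation*}
\int_t^T\omega_n\int_\Omega 2\mathbf{u}\tilde u_n\,\d\x\,\d s\le 2\|\mathbf{u}\|_{\mL^\infty(0,T;\mL^2)}\|\omega\|_{M}\sup_{s\in[t,T]}\|\tilde u_n(s)\|_{\mL^2(\Omega)}.
\end{equation*}
Set $Y(t)=\sup_{[t,T]}\|\tilde u_n\|^2_{\mL^2}+\nu_0\int_t^T\|\nabla\tilde u_n\|^2$. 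Combining the bounds gives $Y(t)\le C\int_t^T Y+C\|\mathbf{u}\|^2+\frac12 Y(t)+C\|\mathbf{u}\|^2\|\omega\|^2$, where Young's inequality has been applied to the product in the measure term. A backward Gronwall argument then yields $Y(0)^{1/2}\le C(\|\mathbf{u}\|+\|\mathbf{u}\|\|\omega\|)$, which is exactly the linear form required in \eqref{Bbornetilde3}.

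\textbf{Passage to the limit and uniqueness.} The uniform bound gives a subsequence with $\tilde u_n\rightharpoonup\tilde u$ weakly in $\mL^2(0,T;\mathcal{H}_0)$ and weakly-$*$ in $\mL^\infty(0,T;\mL^2(\Omega))$, and weak lower semicontinuity transfers \eqref{Bbornetilde3} to $\tilde u$. We pass to the limit in the weak formulation using smooth test functions $\psi$ with $\psi(0)=0$, after integrating by parts in time. The only delicate term is $\int_0^T\big(\int_\Omega\mathbf{u}\psi\,\d\x\big)\omega_n\,\d t$. Since $t\mapsto\int_\Omega\mathbf{u}\psi\,\d\x$ is continuous, weak-$*$ convergence of $\omega_n$ handles it. Continuity of $\mathbf{u}$ in time with values in $\mL^2(\Omega)$ is therefore the essential ingredient; it follows from $\mathbf{u}\in\mathbf{U}_0$ and Corollary \ref{Existence and stability}. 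The boundary term $\int_\Sigma g_1\tilde u_n\psi$ converges because traces converge strongly in $\mL^2(\Sigma)$ by Aubin--Lions compactness, as in Lemma \ref{faible continute de F}.

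For uniqueness, the difference of two solutions solves the homogeneous problem with no measure term and is therefore continuous in time. The same energy estimate with zero data then gives zero by Gronwall. The step we expect to be the main obstacle is the uniform estimate: the right-hand side lies only in $M([0,T];\mL^2)$, not in $\mL^2$ in time. It must therefore be controlled through the supremum in time and Young's inequality rather than the usual $\mL^2$ Gronwall argument, and this is also why $\tilde u$ can be expected only in $\mL^\infty(0,T;\mL^2(\Omega))$ and not in $\mC([0,T];\mL^2(\Omega))$.
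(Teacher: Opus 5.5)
Your proof is correct, but it follows a different route from the paper's. The paper follows Casas and argues by duality; it never tests the regularized adjoint equation with $\tilde u_n$.

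\textbf{The paper's route.} For $f\in\mC^\infty_c(\mathcal{Q})$ it solves the forward linearized problem \eqref{pbpadj} for $v_f$. Integrating by parts gives $\int_{\mathcal{Q}} f\tilde u_n=\int_{\mathcal{Q}}(\mathbf{u}+2\mathbf{u}\omega_n)v_f$. It then uses the two forward estimates \eqref{vfest} and \eqref{vfest2}, $\lVert v_f\rVert_{\mL^\infty(0,T;\mL^2(\Omega))}\le C\lVert f\rVert_{\mL^2(0,T;\mathcal{H}_0')}$ and $\lVert v_f\rVert_{\mC([0,T];\mL^2(\Omega))}\le C\lVert f\rVert_{\mL^1(0,T;\mL^2(\Omega))}$. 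These bound $\tilde u_n$ in the dual spaces $\mL^2(0,T;\mathcal{H}_0)$ and $\mL^\infty(0,T;\mL^2(\Omega))$.

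\textbf{Your route.} You do a direct energy estimate: you pair the measure term with $\sup_t\lVert\tilde u_n(t)\rVert_{\mL^2(\Omega)}$ and close with Young's inequality and a backward Gronwall. This works because the singular source lies in $M([0,T];\mL^2(\Omega))$, a measure in time only with continuous $\mL^2$-valued density. It is more elementary and self-contained, reusing only the energy estimates of Theorem \ref{proposition Gdiff}.

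\textbf{What duality buys.} All regularity work is shifted onto a forward problem with smooth data. It produces the transposition identity $\int_{\mathcal{Q}}f\tilde u=\int_{\mathcal{Q}}\mathbf{u}v_f+\int_0^T\int_\Omega 2\mathbf{u}v_f\d\x\d\omega$, which characterizes $\tilde u$ and is the same identity exploited in the proof of Theorem \ref{theorem optimality}. It also extends to measures in space--time, where energy arguments break down.

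\textbf{Uniqueness in your version.} This needs the standard step you only sketch. From the homogeneous equation one gets $\partial_t z\in\mL^2(0,T;\mathcal{H}_0')$, using $k_1z\in\mL^2(0,T;\mL^{3/2}(\Omega))$ for $m\le 3$. Hence $z\in\mC([0,T];\mL^2(\Omega))$, and the formulation with $\psi(0)=0$ recovers $z(T)=0$.

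\textbf{Two small corrections.} First, the statement allows a signed $\omega$, so drop ``nonnegative'' and estimate with $|\omega_n|$. Second, strong convergence of traces is not needed: weak convergence in $\mL^2(\Sigma)$ suffices by linearity. Moreover, Aubin--Lions as in Lemma \ref{faible continute de F} would require Simon's version here, since $\partial_t\tilde u_n$ is bounded only in $\mL^1(0,T;\mathcal{H}_0')$.
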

 \begin{pf} 
 The proof of this proposition can be obtained using the same technique as that developed in \cite{casas1997pontryagin}.\\ Since the right side of the problem \eqref{adjsys} is a measure, the existence of a solution can be obtained by defining an approximate system for which we know the existence of the solution. Then, we show that the approximate solution is bounded and, therefore, converges weakly towards a limit, which is a solution to the adjoint problem. Here, we only show that the approximate solution is bounded and weakly convergent towards a limit $\tilde{u}$ verifying \eqref{Bbornetilde3}. The rest of the proof uses a classical analysis, which we omit.\\
 Let $\mu\in BV(0, T)$ such that $\d\mu=\omega\in M([0, T]),$ and let $\varphi\in\mathcal{U}_{ad}$ with $\mathbf{u}=\mathcal{F}(\varphi)\in\mathcal{\mathbf{U}}_{0}\cap\mL^{\infty}_{+}(\mathcal{Q})$ solution of \eqref{Eqstate} corresponding to the control $\varphi.$
  We choose a sequence $(\omega_n)_{n}\subset\mC^{\infty}_{c}([0, T])$ that converges weakly star to $\omega$ in $M([0, T])$ in the distribution sense and satisfies
\begin{gather} \label{omom}
\lVert\omega_{n}\lVert_{\mL^{1}([0, T])}\leq\lVert\omega\lVert_{M([0, T])}.
 \end{gather}
Now, let $\tilde{u}_{n}\in\mL^{2}([0, T];\mathcal{H}_{0})$ such that 
 \begin{gather}\label{adjsys2}
   \begin{cases}
 - \frac{\partial\tilde{{u}}_{n}}{\partial t}-\div(\mD\nabla{\tilde{{u}}_{n}})+\big(\mK_1\text{h}'_{1}(\mathbf{u}_{})-\mK_{2}+\alpha_{0}\varphi_{}\big)\tilde{{u}}_{n}=\mathbf{u}_{}+ 2\mathbf{u}_{}\omega_{n} \mbox{ in }\cal Q,\\
-\mD\nabla{\Tilde{u}_{n}}\cdot \mathbf{n}-\mK_3\text{h}'_{2}(\mathbf{u}_{})\tilde{{u}}_{n}=0 \mbox{ on } \Sigma, \\
\tilde{{u}}_{n}(t=T,\cdot)=0 \mbox{ in } \Omega.   
   \end{cases}
 \end{gather}
 By reversing time $t=T-t$ in \eqref{adjsys2}, we obtain a linear system with zero initial condition, and according to the regularity of $\mathbf{u}$, using Proposition \ref{probleme Lineaire}, the system \eqref{adjsys2} admits a unique solution in $\mathcal{W}_{0}\cap \mC([0, T];\mL^{2}(\Omega))$ in the sense of the weak formulation.\\
 Let $f\in\mC^{\infty}_{c}(\mathcal{Q}),$ and denote by $v_{f}$ the solution in $\mathcal{W}_{0}\cap \mC([0, T];\mL^{2}(\Omega))$ (according to a similar result as Proposition \ref{probleme Lineaire}, and using $\mC^{\infty}_{c}(\mathcal{Q})\subset\mL^{2}(0, T;\mathcal{H}'_{0}))$ is dense) of the following problem
 \begin{gather}\label{pbpadj}
  \begin{cases}
 \frac{\partial v}{\partial t}-\div(\mD\nabla{v})+\big(\mK_1\text{h}'_{1}(\mathbf{u}_{})-\mK_{2}+\alpha_{0}\varphi_{}\big)v=f \mbox{ in }\cal Q,\\
-\mD\nabla{v}\cdot \mathbf{n}-\mK_3\text{h}'_{2}(\mathbf{u}_{})v=0 \mbox{ on } \Sigma, \\
v(t=0,\cdot)=0 \mbox{ in } \Omega,  
   \end{cases}
 \end{gather}
such that
 \begin{gather}\label{vfest}
 \lVert v_{f} \lVert_{\mathcal{W}_{0}\cap\mL^{\infty}([0, T];\mL^{2}(\Omega))}\leq C\lVert f\lVert_{\mL^{2}(0,T; \mathcal{H}'_{0})}.
   \end{gather} 
 Multiplying \eqref{pbpadj} by $\tilde{{u}}_{n}$, and integrating by parts over $\cal Q$ (by using Green's formula), and using \eqref{adjsys2}, we obtain
 \begin{gather}\label{futildeest}
 \begin{split}
 \int_{\mathcal{Q}}f\tilde{u}_{n}\d\x\d t &=\int_{\mathcal{Q}}\Big(\frac{\partial v}{\partial t}-\div(\mD\nabla{v})\Big)\tilde{u}_{n}\d\x\d t+\int_{\mathcal{Q}}\Big(\mK_1\text{h}'_{1}(\mathbf{u}_{})-\mK_{2}+\alpha_{0}\varphi\Big)v\tilde{u}_{n}\d\x\d t\\
 &=\int_{\mathcal{Q}}\Big(-\frac{\partial \tilde{u}_{n}}{\partial t}-\div(\mD\nabla{\tilde{u}_{n}})+(\mK_1\text{h}'_{1}(\mathbf{u}_{})-\mK_{2}+\alpha_{0}\varphi)\tilde{u}_{n}\Big)v\d\x\d t\\
 &=\int_{\mathcal{Q}}(\mathbf{u}_{}+ 2\mathbf{u}_{}\omega_{n})v\d\x\d t\\
 &\leq C\bigg(\!\!\lVert\mathbf{u}\lVert_{\mL^{\infty}(0,T; \mL^{2}(\Omega))}\lVert v \lVert_{\mL^{\infty}(0,T; \mL^{2}(\Omega))}+\lVert\omega_{n}\lVert_{\mL^{1}([0, T])}\lVert\mathbf{u}\lVert_{\mL^{\infty}(0,T; \mL^{2}(\Omega))}\lVert v \lVert_{\mL^{\infty}(0,T; \mL^{2}(\Omega))}\!\!\bigg).
 \end{split}
 \end{gather}
According to \eqref{omom} and \eqref{vfest}, we have
 \begin{gather}\label{bornetilde1} 
 \int^{T}_{0}\langle f, \tilde{u}_{n}\rangle_{\mathcal{H}'_{0},\mathcal{H}_{0}}\d t\leq C\lVert f\lVert_{\mL^{2}(0,T; \mathcal{H}'_{0})}\Big(\lVert\mathbf{u}\lVert_{\mL^{\infty}(0,T; \mL^{2}(\Omega))}+\lVert\mathbf{u}\lVert_{\mL^{\infty}(0,T; \mL^{2}(\Omega))}\lVert \omega\lVert_{M([0, T])}\Big).
 \end{gather}
 Moreover, using standard results for linear parabolic systems, we can obtain
  \begin{gather}\label{vfest2}
 \lVert v \lVert_{\mC([0, T];\mL^{2}(\Omega))}\leq C\lVert f\lVert_{\mL^{1}(0,T; \mL^{2}(\Omega))}.
   \end{gather} 
  So according to \eqref{omom}, \eqref{futildeest} and \eqref{vfest2}, we also have
 \begin{gather} \label{bornetilde2}
 \int^{T}_{0}\langle f, \tilde{u}_{n}\rangle_{\mL^{2}(\Omega),\mL^{2}(\Omega)}\d t\leq C\lVert f\lVert_{\mL^{1}(0,T; \mL^{2}(\Omega))}\Big(\lVert\mathbf{u}\lVert_{\mL^{\infty}(0,T; \mL^{2}(\Omega))}+\lVert\mathbf{u}\lVert_{\mL^{\infty}(0,T; \mL^{2}(\Omega))}\lVert \omega\lVert_{M([0, T])}\Big).
 \end{gather}
From \eqref{bornetilde1}, we deduce that the sequence $(\tilde{u}_n)_{n}$ is bounded in $\mL^{2}(0, T;\mathcal{H}_{0}),$ and according to \eqref{bornetilde2}, we have the boundedness of $(\tilde{u}_n)_{n}$ in $\mL^{\infty}(0, T;\mL^{2}(\Omega)).$ Therefore, extracting a subsequent, we can assume that $\tilde{u}_{n}\rightharpoonup\tilde{u}$ weakly in $\mL^{2}(0, T;\mathcal{H}_{0}),$ we have
 \begin{gather}\label{bornetilde3} 
 \lVert\tilde{u}\lVert_{\mL^{2}(0,T; \mathcal{H}_{0})\cap\mL^{\infty}(0,T; \mL^{2}(\Omega))}\leq C\Big(\lVert\mathbf{u}\lVert_{\mL^{\infty}(0,T; \mL^{2}(\Omega))}+\lVert\mathbf{u}\lVert_{\mL^{\infty}(0,T; \mL^{2}(\Omega))}\lVert \omega\lVert_{M([0, T])}\Big).
 \end{gather}
 The rest of the proof uses a classical analysis, which we omit.
 \end{pf} 
Now, we can derive the first-order optimality conditions for the solutions of $(\P)$ by the following
\begin{theorem}
\label{theorem optimality}
   Let ${\varphi}_{*}\in\mathcal{U}_{ad}$ a solution of $(\P)$ and $\mathbf{u}_{*}=\mathcal{F}(\varphi_{*})$ such that the condition \eqref{constraint qualif condition} holds. Then there exist Lagrange multipliers  $(\tilde{\mathbf{u}}_{*},\mu_{*})\in\mL^{2}(0,T;\mathcal{H}_{0})\cap \mL^{\infty}(0,T;\mL^{2}(\Omega))\times BV(0, T)_{+},$ such that $\mu_{*}\ne 0,$ and that $\tilde{\mathbf{u}}_{*}=\tilde{\mathcal{F}}(\varphi_{*})$ is the solution of adjoint problem \eqref{optim utilde theo}
   \begin{gather}
   \begin{cases}
   \label{optim utilde theo}
 - \frac{\partial\tilde{\mathbf{u}}_{*}}{\partial t}-\div(\mD\nabla{\tilde{\mathbf{u}}_{*}})+\big(\mK_1\rm{h}'_{1}(\mathbf{u}_{*})-\mK_{2}+\alpha_{0}\varphi_{*}\big)\tilde{\mathbf{u}}_{*}=\mathbf{u}_{*}+ 2\mathbf{u}_{*}\d\mu_{*}(t) \mbox{ in }\cal Q,\\
-\mD\nabla{\Tilde{ \mathbf{u}}_{*}}\cdot \mathbf{n}-\mK_3\rm{h}'_{2}(\mathbf{u}_{*})\tilde{\mathbf{u}}_{*}=0 \mbox{ on } \Sigma, \\
\tilde{\mathbf{u}}_{*}(t=T,\cdot)=0 \mbox{ in } \Omega,   
   \end{cases}
 \end{gather}
 the  multiplier $\mu_{*}$ verifies
   \begin{gather}
   \label{complementary condition}
    \mathcal{G}(\varphi_{*})\in \mathcal{K}; \hspace{0.1cm} \int^{T}_{0}\mathcal{G(\varphi_{*})}\d\mu_{*}(t)=0; \hspace{0.1cm} \mu_{*}\in \mathcal{K}^{-}; \hspace{0.1cm} \mu_{*}(T)=0,
\end{gather}
 and we have the following optimality condition
   \begin{gather}
   \label{optimality cond utilde1}
     \big\langle\lambda|\varphi_*|^{\mathbf{p}_{c}-1}-\alpha_{0}\mathbf{u}_* \tilde{\mathbf{u}}_*;\varphi-\varphi_*\big\rangle_{\mL^{\mathbf{p}^{*}_{c}}(\mathcal{Q}),\mL^{\mathbf{p}_{c}}(\mathcal{Q})}\ge 0 \mbox{ for all $\varphi\in\mathcal{U}_{ad}$ }.
   \end{gather}
\end{theorem}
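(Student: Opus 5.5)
We will derive the statement from the abstract Lagrange multiplier rule for problems with a convex cone constraint having non-empty interior (see \cite[Sec. 3.1--3.2]{BonSh1998} and \cite{BS2000}). The reduced problem is $\min J(\varphi)$ over $\varphi\in\mathcal{U}_{ad}$ with $\mathcal{G}(\varphi)\in\mathcal{K}\subset\mC([0;T])$. By Theorem \ref{proposition Gdiff}, $\varphi\mapsto\mathbf{u}=\mathcal{F}(\varphi)$ is $\mC^1$ from $\mathcal{U}_{ad}$ into $\mathbf{U}_0\hookrightarrow \mC([0,T];\mL^2(\Omega))$. Hence $J$ is $\mC^1$ (the term $\frac{\lambda}{\mathbf{p}_c}\int|\varphi|^{\mathbf{p}_c}$ is $\mC^1$ on $\mL^{\mathbf{p}_c}$ since $\mathbf{p}_c\ge 2$). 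Likewise $\mathcal{G}:\mathcal{U}_{ad}\to\mC([0;T])$ is $\mC^1$, with
\[
D\mathcal{G}(\varphi_*)\Phi(t)=2\int_\Omega \mathbf{u}_*(t)w_\Phi(t)\d\x .
\]
The factor $2$ must be carried consistently; this is where the factor $2\mathbf{u}_*\d\mu_*$ in \eqref{optim utilde theo} comes from. Since $int(\mathcal{K})\neq\emptyset$, the generalized Fritz--John theorem yields $(\beta,\mu_*)\in\Lambda_g(\varphi_*)$. Concretely, $\beta\ge0$ and $\mu_*\in N_{\mathcal{K}}(\mathcal{G}(\varphi_*))$, i.e.\ \eqref{equation de mualpha}, so the first three items of \eqref{complementary condition} hold. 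We also get $\mu_*(T)=0$ by the normalisation of Stieltjes representatives, and $(\beta,\mu_*)\ne(0,0)$. Finally,
\[
\beta J'(\varphi_*)\Phi+\int_0^T D\mathcal{G}(\varphi_*)\Phi\,\d\mu_*\ge0 \qquad \text{for all } \Phi=\varphi-\varphi_* ,\ \varphi\in\mathcal{U}_{ad}.
\]

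Next we use the qualification condition \eqref{constraint qualif condition}, which is the Robinson condition \eqref{regularité contrainte}, to exclude $\beta=0$. Suppose $\beta=0$. Then $\mu_*\ne0$ and $\mu_*\ge0$. Take $\varphi$ from \eqref{constraint qualif condition} and use the complementarity $\int_0^T\mathcal{G}(\varphi_*)\d\mu_*=0$. This gives
\[
0\le\int_0^T D\mathcal{G}(\varphi_*)\Phi\,\d\mu_*=\int_0^T\big(\mathcal{G}(\varphi_*)+D\mathcal{G}(\varphi_*)\Phi\big)\d\mu_* .
\]
The integrand is strictly negative and continuous on $[0,T]$, and $\mu_*$ is a nonzero positive measure, so the right-hand side is $<0$, a contradiction. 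Hence $\beta>0$, and we normalise $\beta=1$, so that $\mu_*\in\Lambda_l(\varphi_*)$. The statement that $\mu_*\neq 0$ is then understood as nontriviality of the multiplier pair; strict positivity of $\mu_*$ is not needed.

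It remains to rewrite the variational inequality using the adjoint state. Proposition \ref{welpadjsys} with $\omega=\d\mu_*$ gives a unique $\tilde{\mathbf{u}}_*\in\mL^2(0,T;\mathcal{H}_0)\cap\mL^\infty(0,T;\mL^2(\Omega))$ solving \eqref{optim utilde theo}. We first compute
\[
J'(\varphi_*)\Phi+\int_0^T D\mathcal{G}(\varphi_*)\Phi\,\d\mu_*
=\int_{\mathcal{Q}}\mathbf{u}_*w_\Phi+\int_0^T\!\!\int_\Omega2\mathbf{u}_*w_\Phi\,\d\x\,\d\mu_*(t)+\lambda\int_{\mathcal{Q}}|\varphi_*|^{\mathbf{p}_c-2}\varphi_*\Phi .
\]
Since $\varphi_*\ge\mathbf{a}\ge0$, we have $|\varphi_*|^{\mathbf{p}_c-2}\varphi_*=|\varphi_*|^{\mathbf{p}_c-1}$. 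The first two terms are the action of the right-hand side of \eqref{optim utilde theo} on $w_\Phi$. We then test the adjoint equation with $w_\Phi$ and the linearized equation \eqref{Gdiff problem} with $\tilde{\mathbf{u}}_*$, and use Green's formula in space and time. The boundary terms $\int_\Sigma\mK_3{\rm h}_2'(\mathbf{u}_*)w_\Phi\tilde{\mathbf{u}}_*$ cancel, the initial and final terms vanish because $w_\Phi(0)=0$ and $\tilde{\mathbf{u}}_*(T)=0$, and the zero-order terms coincide. The result is
\[
\int_{\mathcal{Q}}\mathbf{u}_*w_\Phi+2\int_0^T\!\!\int_\Omega\mathbf{u}_*w_\Phi\,\d\x\,\d\mu_*=-\alpha_0\int_{\mathcal{Q}}\Phi\,\mathbf{u}_*\tilde{\mathbf{u}}_* .
\]
Substituting this identity yields \eqref{optimality cond utilde1}.

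The main obstacle is justifying this duality identity, because the source term contains the measure $\d\mu_*$ in time and $\tilde{\mathbf{u}}_*$ has only the weak regularity of Proposition \ref{welpadjsys}. We plan to carry it out first for the approximate adjoints $\tilde{u}_n$ of \eqref{adjsys2}, where $\omega_n\in\mC_c^\infty$ and Green's formula is legitimate exactly as in \eqref{futildeest}, with $f$ replaced by $-\alpha_0\Phi\mathbf{u}_*$. Then we pass to the limit. On the left, $\tilde{u}_n\rightharpoonup\tilde{\mathbf{u}}_*$ weakly in $\mL^2(0,T;\mathcal{H}_0)$ and $\Phi\mathbf{u}_*\in\mL^2(\mathcal{Q})$. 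On the right, $t\mapsto\int_\Omega\mathbf{u}_*(t)w_\Phi(t)\d\x$ is continuous on $[0,T]$, because $\mathbf{u}_*,w_\Phi\in\mC([0,T];\mL^2(\Omega))$, and $\omega_n\rightharpoonup^*\d\mu_*$ in $M([0,T])$. Weak-star convergence against a continuous function of time is exactly what is needed, possibly after choosing $\omega_n$ so that no mass is lost at the endpoints $t=0,T$.
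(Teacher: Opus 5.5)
Your proposal is correct and follows the paper's route: Fritz--John multipliers from $int(\mathcal{K})\neq\emptyset$, exclusion of $\beta=0$ via the qualification condition (the paper just cites \cite[Prop 3.16, 3.18]{BS2000}, while you give the direct argument), and the adjoint identity obtained by testing \eqref{Gdiff problem} against $\tilde{\mathbf{u}}_*$, which the paper performs only formally for a ``sufficiently regular'' $\tilde u$ and which your approximation through \eqref{adjsys2} makes rigorous, with $\omega_n\rightharpoonup^{*}\d\mu_*$ in $\mC([0;T])'$ so that no mass at $t=0,T$ is lost. Your caveat on $\mu_*\neq0$ is also justified and points to an unsupported step in the paper, which asserts $\mu_*\neq0$ right after obtaining $\Lambda_l(\varphi_*)\neq\emptyset$: once $\beta=1$ nothing forces $\mu_*\neq0$, and indeed if $\mathcal{G}(\varphi_*)<0$ on all of $[0;T]$ the complementarity relation in \eqref{complementary condition} forces $\mu_*=0$.
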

 \begin{pf}
 Let $\varphi_{*}\in\mathcal{U}_{ad}$ be an optimal control of the problem $(\P)$ and $\mathbf{u}_{*}=\mathcal{F}(\varphi_*)$ the solution of \eqref{Eqstate} corresponding to the control $\varphi_{*}$ such that the condition \eqref{constraint qualif condition} holds at $\varphi_{*}.$ Since the cone $\mathcal{K}$ has a non-empty interior, then according to \cite[Prop 3.18]{BS2000}, the set $\Lambda_{g}(\varphi_{*})$ of generalized multipliers is nonempty. Moreover, given that \eqref{constraint qualif condition} holds, then from \cite[Prop 3.16]{BS2000} there is no singular multiplier, that is, $\Lambda_{l}(\varphi_*)$ is nonempty. Therefore, there exists a Lagrange multiplier $\mu_{*}\in BV(0, T)$ such that
 \begin{gather}\label{condmu}
\mu_{*}\ne 0, \mbox{ and } \mu_{*}\in N_{\mathcal{K}}(\mathcal{G}(\varphi_{*})), 
 \end{gather}
  and that
   \begin{gather}\label{condL}
  -\frac{\partial \mathcal{L}}{\partial \varphi}(1,\tilde{\mathbf{u}}_{*},\mu_{*},\varphi_{*},\mathbf{u}_{*})\in N_{\mathcal{U}_{ad}}(\varphi_*).
  \end{gather}
Since the subset $\mathcal{K}$  is a closed convex cone, then the relation \eqref{condmu} implies that $\mu_{*}\ne 0$ and verifies
\begin{gather*}
  \int^{T}_{0}\mathcal{G(\varphi_{*})}\d\mu_{*}(t)=0; \hspace{0.1cm} \mu_{*}\in \mathcal{K}^{-}, \hspace{0.1cm} \mu_{*}(T)=0,  \hspace{0.1cm}\mathcal{G}(\varphi_{*})\in \mathcal{K}. 
\end{gather*}
Now let's calculate $\frac{\partial \mathcal{L}}{\partial \varphi}(1,\tilde{\mathbf{u}}_{*},\mu_{*},\varphi_{*},\mathbf{u}_{*}).$\\
 Let $\phi\in \mL^{\mathbf{p}_{c}}(\cal{Q}),$ using $\mathbf{u}_{*}=\mathcal{F}(\varphi_{*})$ and the fact that $w=\mathcal{F}'(\varphi)\cdot \phi$ is the solution of problem \eqref{Gdiff problem}, we have 
\begin{gather}
\label{derL}
 \frac{\partial \mathcal{L}}{\partial \varphi}(1,\tilde{\mathbf{u}}_{*},\mu_* ,\varphi_*,\mathbf{u}_*)\cdot \phi=\int_{\cal Q}\mathbf{u}_*w\d\x\d t+\lambda\int_{\cal Q}|\varphi|^{\mathbf{p}_{c}-1} \cdot \phi\d\x\d t +2\int^{T}_{0}\int_{\Omega}\mathbf{u}_{*}w\d\x\d\mu_{*}(t).
\end{gather}
Let $\omega_{*}\in M([0, T])$ such that $\d\mu_{*}=:\omega_{*},$ then \eqref{derL} becomes
\begin{gather}\label{Lphi}
 \frac{\partial \mathcal{L}}{\partial \varphi}(1,\tilde{\mathbf{u}}_{*},\mu_* ,\varphi_*,\mathbf{u}_*)\cdot \phi=\int_{\cal Q}(\mathbf{u}_{*}+2\omega_{*}\mathbf{u}_{*})w\d\x\d t+\lambda\int_{\cal Q}|\varphi|^{\mathbf{p}_{c}-1} \cdot \phi\d\x\d t.
\end{gather}
 Multiplying \eqref{Gdiff problem} by $\tilde{{u}}$, where $\tilde{{u}}$ is a sufficiently regular function, and integrating by parts over $\cal Q$ (by using Green's formula), we obtain (we denote by $\mathbf{u} =\mathcal{F}(\varphi)$):
\begin{gather}\label{adjw}
\begin{split}	
	\displaystyle{ \int^{T}_{0}\int_{\Omega}\Big(-\frac{\partial \tilde{u}}{\partial t}-\div(\mD\nabla{\tilde{u}})+(\mK_{1} \text{h}'_{1}(\mathbf{u})-\mK_{2}+\alpha_{0}\varphi)\tilde{u}\Big)w\d\x\d t+\int_{\Omega}w(T)\tilde{u}(T)\d\x}\\
\displaystyle{	+\int^{T}_{0}\int_{\Gamma}\Big(\mD\nabla{\tilde{u}}\cdot\mathbf{n}+\mK_{3}\text{h}'_{2}(\mathbf{u})\tilde{u}\Big)w\d\x\d t=-\alpha_{0}\int^{T}_{0}\int_{\Omega}\mathbf{u}\tilde{u}\phi\d\x\d t.}
\end{split}
\end{gather}
Now, since $\tilde{u}$ is the solution of system \eqref{adjsys}, we obtain
\begin{gather*}
    \int_{\cal Q}(\mathbf{u}+2\omega\mathbf{u})w\d\x\d t=-\alpha_{0}\int_{\cal Q}\mathbf{u}\tilde{{u}}\phi \d\x\d t.
\end{gather*}
Then from \eqref{Lphi}, we have ($\forall \phi\in \mL^{\mathbf{p}_{c}}(\cal{Q})$): 
\begin{gather*}
\frac{\partial \mathcal{L}}{\partial \varphi}(1,\tilde{{u}},\mu,\varphi,\mathbf{u})\cdot \phi=\int_{\mathcal{Q}}(\lambda|\varphi|^{\mathbf{p}_{c}-1}-\alpha_{0}\mathbf{u}\tilde{{u}})\cdot\phi\d\x\d t. 
\end{gather*}
Therefore (since $\varphi_{*}$ is an optimal solution of $(\P)$, and using \eqref{conenormal} and \eqref{condL}):
 \begin{gather}
 \label{optimality cond utilde}
 \big\langle \lambda|\varphi_{*}|^{\mathbf{p}_{c}-1}-\alpha_{0}\mathbf{u}_{*}\tilde{\mathbf{u}}_{*};\varphi-\varphi_{*}\big\rangle_{\mL^{\mathbf{p}^{*}_{c}}(\mathcal{Q}),\mL^{\mathbf{p}_{c}}(\mathcal{Q})}\ge 0 \mbox{ for all $\varphi\in\mathcal{U}_{ad}$ },
\end{gather}
where $\tilde{\mathbf{u}}_{*}$ is the solution of the adjoint system \eqref{adjsys} corresponding to the  solution $\mathbf{u}_{*}=\mathcal{F}(\varphi_{*})$ of \eqref{Eqstate}.
 \end{pf}
\section{Application and Numerical Simulations}\label{setting4}
For the application of our approach, we consider the problem of the eradication of non-small cell lung cancer,  a highly malignant lung tumour frequently observed in smokers. More precisely, we numerical study the optimal treatment of two types of malignant lung tumours: one located in the left lobe of the lung and the other situated in the right lobe, near the bronchus, within a 2D domain of the lung.

The numerical resolution of optimality system \eqref{optim utilde theo}-\eqref{complementary condition}-\eqref{optimality cond utilde1} presents some difficulties, as the numerical construction of the measure $\mu_{*}$ in the adjoint system \eqref{optim utilde theo} and verifying \eqref{complementary condition} on singular arcs of the form $[\tau_{en}; \tau_{ex}]$ where the constraint \eqref{state constraint} is active, with $\tau_{en},$ $\tau_{ex}$ the entry and exit point. Some work suggested an approach based on the shooting method as in \cite{BHerm2008}. We attempted it, but it did not work for our problem. Then, to solve the constrained control problem $(\P)$ numerically, we use penalty methods (see e.g., \cite{ACH1997,BHK2000}). The penalization method consists of replacing the problem $(\P)$ with a series of unconstrained problems which are formed by adding some penalty functions to the initial objective function.
\subsection{Penalisation Approach} 
This approach consists of formally penalizing the state and control constraints, and solving the unconstrained control problem $(\overline{\P})$
\begin{gather}
    \begin{split}                             
    \label{control problem modif}
\inf_{ \varphi\in\mL^{\mathbf{p}_{c}}(\cal{Q})}\bar{J}(\varphi),
\end{split}
\end{gather}
where 
\begin{gather}
\label{Jtilde}
\begin{split}
 \bar{J}(\varphi)=J(\varphi)
 +\frac{\beta_{1}}{\mathbf{p}_{c}}\int_{\cal Q}(\mathbf{a}-\varphi)^{\mathbf{p}_{c}}_{+} \d\x\d t+\frac{\beta_{2}}{\mathbf{p}_{c}}\int_{\cal Q}(\varphi-\mathbf{b})^{\mathbf{p}_{c}}_{+}
 \d\x\d t
 +\frac{\beta_{3}}{4}\int^{T}_{0}(\mathcal{G}(\varphi))^{2}_{+}\d t
\end{split}
\end{gather}
with $\beta_1,\beta_2$ the prices to pay for the control constraint, and $\beta_3$ the price to pay for the state constraint.\\ 
 By the same minimizing sequence argument as in the proof of  Theorem \ref{existence solution of P}, we easily show that $(\bar{\P})$ admits at least one solution in $\mL^{\mathbf{p}_{c}}(\cal{Q}).$
To characterize the optimal control of $\overline{\P}$, we introduce the following adjoint problem corresponding to the problem \eqref{Eqstate} and the penalized cost \eqref{Jtilde} by
        \begin{gather}
   \begin{cases}
   \label{utilde penal}
 - \frac{\partial\tilde{v}_{}}{\partial t}-\div(\mD\nabla{\tilde{v}_{}})+(\mK_1\text{h}'_{1}(\mathbf{u})-\mK_{2}+\alpha_{0}\varphi_{})\tilde{v}_{}=\mathbf{u}_{}+ \beta_{3}\mathbf{u}_{}\mathcal{G}(\varphi)_{+} \mbox{ in }\cal Q,\\
-\mD\nabla{\Tilde{v}_{}}\cdot \mathbf{n}-\mK_3\text{h}'_{2}(\mathbf{u})\Tilde{v}_{}=0 \mbox{ on } \Sigma, \\
\tilde{v}_{}(T,\x)=0 \mbox{ in } \Omega,  
   \end{cases}
 \end{gather}
 where $\mathbf{u}=\mathcal{F}(\varphi)$ is the solution corresponding to $\varphi.$\\
Reversing time ($t:=T-t$), we can see that the linear system \eqref{utilde penal} is well-posed in the week formulation sense (Theorem \ref{probleme Lineaire}) and that the week solution $\tilde{v}$ is in $\mathcal{\mathbf{U}}_{0}\cap \mC([0; T],\mL^{2}(\Omega)).$
\begin{proposition}
\label{gradJtildeeps}
(i) The cost function $\Bar{J}_{}$ is continuously differentiable and its partial derivative is given by
     \begin{gather}
     \label{gradJtild}
        \nabla{\bar{J}}(\varphi)=\lambda|\varphi|^{\mathbf{p}_{c}-1} -\alpha_{0}\tilde{v}_{}\mathbf{u}-\beta_{1}(\mathbf{a}-\varphi)^{\mathbf{p}_{c}-1}_{+} +\beta_{2}(\varphi-\mathbf{b})^{\mathbf{p}_{c}-1}_{+},
     \end{gather}
  where $\tilde{v}$ verifies \eqref{utilde penal}.\\
(ii) If $\varphi_{*}\in\mL^{\mathbf{p}_{c}}(\cal{Q})$ is a solution of $(\bar{\P})$ and $\mathbf{u}_{*}=\cal{F}(\varphi_{*})$ is the associated optimal state, then
\begin{gather}\label{cequi0}
\lambda|\varphi_{*}|^{\mathbf{p}_{c}-1}-\alpha_{0}\mathbf{u}_{*}{\Tilde{v}}_{*}-\beta_{1}(\mathbf{a}-\varphi_{*})^{\mathbf{p}_{c}-1}_{+}+\beta_{2}(\varphi_{*}-\mathbf{b})^{\mathbf{p}_{c}-1}_{+}=0,
\end{gather}
where $\tilde{v}_{*}$ verifies 
        \begin{gather}
        \label{ceqi}
   \begin{cases}
 - \frac{\partial\tilde{v}_{*}}{\partial t}-\div(\mD\nabla{\tilde{v}_{*}})+(\mK_1{\rm{h}}'_1(\mathbf{u}_*)-\mK_{2}+\alpha_{0}\varphi_{*})\tilde{v}_{*}=\mathbf{u}_{*}+ \beta_{3}\mathbf{u}_{*}\mathcal{G}(\varphi_{*})_{+} \mbox{ in }\cal Q,\\
-\mD\nabla{\Tilde{v}_{*}}\cdot \mathbf{n}-\mK_3{\rm{h}}'_2(\mathbf{u}_{*})\Tilde{v}_{*}=0 \mbox{ on } \Sigma, \\
\tilde{v}_{*}(T,\x)=0 \mbox{ in } \Omega.  
   \end{cases}
 \end{gather}
 \end{proposition}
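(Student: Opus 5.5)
We split $\bar J$ into four pieces and differentiate each: the tracking term $\frac12\int_{\cal Q}\mathbf{u}^2$, the control-cost and the two control-penalty terms (depending on $\varphi$ only), and the state penalty $\frac{\beta_3}{4}\int_0^T(\mathcal{G}(\varphi))_+^2\d t$. Throughout, Theorem \ref{proposition Gdiff} is used in the form valid on all of $\mL^{\mathbf{p}_c}(\mathcal{Q})$: we assume that the solution operator $\mathcal{F}$ and the estimates of Corollary \ref{Existence and stability} extend to controls without the box constraint. This matters because $(\bar{\P})$ is posed on the whole space. It is harmless as long as $\mathbf{u}$ stays bounded in $\mL^\infty(\mathcal{Q})$, which requires $\varphi\ge 0$ in the maximum principle (Proposition \ref{principe de maximum}). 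This extension of the state theory is the main obstacle: for sign-changing $\varphi$ the term $-\alpha_0\varphi\mathbf{u}$ can act as growth. I would handle it by a comparison argument, using that $\varphi\in\mL^{\mathbf{p}_c}$ with $\mathbf{p}_c$ large enough to keep $\alpha_0\varphi$ in the class treated by Proposition \ref{probleme Lineaire}. Alternatively, I would restrict to the set where the penalties keep $\varphi$ near $[\mathbf{a},\mathbf{b}]$ and note that the derivative formula is local.

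\textbf{The pure-control terms.} The maps $\varphi\mapsto\frac{1}{\mathbf{p}_c}\int|\varphi|^{\mathbf{p}_c}$ and $\varphi\mapsto\frac{1}{\mathbf{p}_c}\int(\mathbf{a}-\varphi)_+^{\mathbf{p}_c}$, $\frac{1}{\mathbf{p}_c}\int(\varphi-\mathbf{b})_+^{\mathbf{p}_c}$ are Nemytskii functionals of $\mC^1$ convex integrands with $\mathbf{p}_c$-growth. They are therefore continuously Fréchet differentiable on $\mL^{\mathbf{p}_c}(\mathcal{Q})$ with gradients $|\varphi|^{\mathbf{p}_c-2}\varphi$, $-(\mathbf{a}-\varphi)_+^{\mathbf{p}_c-1}$ and $(\varphi-\mathbf{b})_+^{\mathbf{p}_c-1}$ in $\mL^{\mathbf{p}_c^*}(\mathcal{Q})$. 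The paper writes the first gradient as $|\varphi|^{\mathbf{p}_c-1}$, which agrees for $\varphi\ge0$. This gives the $\lambda$, $\beta_1$ and $\beta_2$ terms of \eqref{gradJtild}.

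\textbf{The state-dependent terms.} By the chain rule and Theorem \ref{proposition Gdiff}, the derivative of the tracking term in direction $\phi$ is $\int_{\cal Q}\mathbf{u}w$, where $w=\mathcal{F}'(\varphi)\phi$ solves \eqref{Gdiff problem}. Similarly, $\mathcal{G}'(\varphi)\phi(t)=2\int_\Omega\mathbf{u}(t)w(t)\d\x$, which is continuous in $\varphi$ with values in $\mC([0,T])$ because $w\in\mC([0,T];\mL^2(\Omega))$. Since $s\mapsto s_+^2$ is $\mC^1$ with derivative $2s_+$, the derivative of $\frac{\beta_3}{4}\int_0^T\mathcal{G}_+^2$ is
\[
\beta_3\int_0^T\mathcal{G}(\varphi)_+\int_\Omega\mathbf{u}w\d\x\d t .
\]
Adding the two contributions gives $\int_{\cal Q}(\mathbf{u}+\beta_3\mathbf{u}\mathcal{G}(\varphi)_+)w$. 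Now the right-hand side of \eqref{utilde penal} is in $\mL^2(\mathcal{Q})$ (as $\mathbf{u}\in\mL^\infty$ and $\mathcal{G}$ is continuous), so \eqref{utilde penal} has a weak solution $\tilde v$ by Proposition \ref{probleme Lineaire} after time reversal. I would then repeat the Green-formula duality computation \eqref{adjw} from the proof of Theorem \ref{theorem optimality}, with $\tilde v$ in place of $\tilde{u}$. The terminal term vanishes since $\tilde v(T)=0$ and $w(0)=0$, and the boundary terms cancel by the Robin conditions. This yields
\[
\int_{\cal Q}(\mathbf{u}+\beta_3\mathbf{u}\mathcal{G}_+)w=-\alpha_0\int_{\cal Q}\mathbf{u}\tilde v\phi ,
\]
which identifies the $-\alpha_0\tilde v\mathbf{u}$ term of \eqref{gradJtild}.

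\textbf{Continuity of the gradient.} I would combine the Lipschitz estimates of Corollary \ref{Existence and stability} for $\mathbf{u}$ with a stability estimate for $\tilde v$. The latter comes from Proposition \ref{probleme Lineaire} applied to the difference of two adjoint problems. The coefficient differences $\mK_1(\mathrm{h}_1'(\mathbf{u}_1)-\mathrm{h}_1'(\mathbf{u}_2))+\alpha_0(\varphi_1-\varphi_2)$ and the source differences are controlled exactly as in the proof of Theorem \ref{proposition Gdiff}. This gives $\mathbf{u}\tilde v$ continuous into $\mL^{\mathbf{p}_c^*}(\mathcal{Q})$, since $\mathbf{u}$ is bounded and $\tilde v\in\mL^2(\mathcal{Q})$ with $\mathbf{p}_c^*\le 2$, and proves (i).

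\textbf{Part (ii).} Since $(\bar{\P})$ is unconstrained on the vector space $\mL^{\mathbf{p}_c}(\mathcal{Q})$, a minimizer $\varphi_*$ satisfies $\nabla\bar J(\varphi_*)=0$ in $\mL^{\mathbf{p}_c^*}(\mathcal{Q})$: take $\pm\phi$ in the directional derivative. With (i) this is exactly \eqref{cequi0}, where $\tilde v_*$ solves \eqref{ceqi}, i.e.\ \eqref{utilde penal} at $\varphi_*$.
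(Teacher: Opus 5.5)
Your computation is the paper's proof. It uses the chain rule with $w=\mathcal{F}'(\varphi)\phi$ from Theorem~\ref{proposition Gdiff}, and the duality identity $\int_{\mathcal{Q}}(\mathbf{u}+\beta_3\mathbf{u}\,\mathcal{G}(\varphi)_+)w\,\d\x\d t=-\alpha_0\int_{\mathcal{Q}}\mathbf{u}\tilde v\phi\,\d\x\d t$, obtained by testing \eqref{utilde penal} with $w$. It then uses $\nabla\bar J(\varphi_*)=0$ at an unconstrained minimizer. Your correction of the first gradient to $|\varphi|^{\mathbf{p}_c-2}\varphi$ is right.

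The issue you flag is also real. The paper just declares $\bar J$ continuously differentiable ``as a composition''. Yet $\mathcal{F}$, Corollary~\ref{Existence and stability}, the bound $0\le\mathbf{u}\le c_s$ of Proposition~\ref{principe de maximum}, and hence Theorem~\ref{proposition Gdiff}, are only established for $\varphi\in\mathcal{U}_{ad}$. By contrast, $(\bar{\P})$ is posed on all of $\mL^{\mathbf{p}_c}(\mathcal{Q})$, and is not even well defined without an extension. So your argument is exactly as complete as the paper's: both are conditional on that extension.

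You should not claim that either of your two repairs closes this gap.
\begin{itemize}
\item \textbf{The linear proposition does not apply.} Proposition~\ref{probleme Lineaire} needs $k_1\in\mL^\infty(0,T;\mL^2(\Omega))$. For $\varphi\in\mathcal{U}_{ad}$ this comes from the pointwise bound $0\le\varphi\le\mathbf{b}(\x)$, not from integrability, and no $\mathbf{p}_c<\infty$ provides it. For example, $\varphi=-|t-t_0|^{-1/(2\mathbf{p}_c)}$ lies in $\mL^{\mathbf{p}_c}(\mathcal{Q})$, but $\lVert\varphi(t)\rVert_{\mL^2(\Omega)}\to\infty$ as $t\to t_0$.
\item \textbf{It gives no bound on the nonlinear state.} That proposition is linear, so it yields no $\mL^\infty$ bound on $\mathbf{u}$. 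Such a bound is what Theorem~\ref{proposition Gdiff} uses to control $\mathrm{h}'_1(\mathbf{u})$ and $\mathrm{h}''_1$.
\item \textbf{Existence itself uses the box constraint.} Lemma~\ref{epsgf1f2} uses $\lVert\varphi\rVert_{\mL^\infty(0,T;\mL^{\mathbf{p}_c}(\Omega))}\le\lVert\mathbf{b}\rVert_{\mL^{\mathbf{p}_c}(\Omega)}$.
\end{itemize}
A genuine comparison argument would need parabolic $\mL^\infty$ estimates with the unbounded potential $\mK_2+\alpha_0\varphi^-\in\mL^{\mathbf{p}_c}(\mathcal{Q})$. That means a linear theory beyond Proposition~\ref{probleme Lineaire}, together with an integrability condition of the type $\mathbf{p}_c>(m+2)/2$, which the standing assumption $\mathbf{p}_c\ge 2$ does not guarantee.

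The ``locality'' alternative does not help either, for three reasons. First, every $\mL^{\mathbf{p}_c}(\mathcal{Q})$-neighbourhood of a control contains controls unbounded below and outside $\mL^\infty(0,T;\mL^2(\Omega))$. Second, the finite penalties do not confine a minimizer of $(\bar{\P})$ to $\mathcal{U}_{ad}$. Third, if you shrink the admissible set of $(\bar{\P})$ to one where the state theory holds, minimality only yields a variational inequality rather than \eqref{cequi0}. You would then have to show separately that $\varphi_*$ is interior in a suitable sense.
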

\begin{pf}
 (i)  From the Theorem \ref{proposition Gdiff}, the operator $\mathcal{F}$ is continuously differentiable. Then, $\bar{J}$ is continuously differentiable as a composition of continuously differentiable functions.\\
     Let $\phi$ be in $\mL^{\mathbf{p}_{c}}(\cal{Q})$ and $w=\mathcal{F}'(\varphi)\cdot \phi$  be a solution of \eqref{Gdiff problem}.
\begin{gather*}
 \frac{\partial \bar{J}_{}}{\partial \varphi}\cdot \phi=\int_{\mathcal{Q}} \Big(\!\lambda\varphi^{\mathbf{p}_{c}-1}-\beta_{1}(\mathbf{a}-\varphi)^{\mathbf{p}_{c}-1}_{+}+\beta_{2}(\varphi-\mathbf{b})^{\mathbf{p}_{c}-1}_{+}\!\Big)\phi\d\x\d t+\int_{\cal{Q}}\Big(\!\mathbf{u}+\beta_{3}\mathbf{u}\mathcal{G}(\varphi)_{+}\!\Big) w\d\x\d t. 
\end{gather*}
We calculate the term $\int_{\cal{Q}}\Big(\mathbf{u}+\beta_{3}\mathbf{u}\mathcal{G}(\varphi)_{+}\Big) w\d\x\d t.$\\
Multiplying by $w$ the first equation of  \eqref{utilde penal}, and integrating by parts over $\cal{Q}$, and using \eqref{Gdiff problem}, we obtain
\begin{gather*}
\int_{\cal{Q}}\Big(\mathbf{u}+\beta_{3}\mathbf{u}\mathcal{G}(\varphi)_{+}\Big) w\d\x\d t =-\alpha_{0}\int_{\cal{Q}}\mathbf{u}\tilde{v}_{}\phi\d\x\d t.
\end{gather*}
Then, we have (for all $\phi\in \mL^{\mathbf{p}_{c}}(\cal{Q})$):
\begin{gather*}
 \frac{\partial \bar{J}}{\partial \varphi}\cdot \phi=\int_{Q}\bigg(\lambda|\varphi|^{\mathbf{p}_{c}-1} -\alpha_{0}\tilde{v}_{}\mathbf{u}-\beta_{1}(\mathbf{a}-\varphi)^{\mathbf{p}_{c}-1}_{+} +\beta_{2}(\varphi-\mathbf{b})^{\mathbf{p}_{c}-1}_{+}\bigg)\cdot\phi\d\x\d t.
\end{gather*}
Since $\bar{J}$ is continuously differentiable, then
\begin{gather*}
\nabla{\bar{J}}(\varphi)=\lambda|\varphi|^{\mathbf{p}_{c}-1} -\alpha_{0}\tilde{v}_{}\mathbf{u}-\beta_{1}(\mathbf{a}-\varphi)^{\mathbf{p}_{c}-1}_{+} +\beta_{2}(\varphi-\mathbf{b})^{\mathbf{p}_{c}-1}_{+}.
\end{gather*}
(ii) Let $\varphi_{*}\in\mL^{\mathbf{p}_{c}}(\cal{Q})$ be a solution of the unconstrained problem $(\bar{\P})$ and $\mathbf{u}_{*}=\cal{F}(\varphi_{*})$ the associated optimal state. Since $\bar{J}$ is continuously differentiable, we can deduce that $\nabla{\bar{J}}(\varphi_{*})=0,$ and then from \eqref{gradJtild}
\begin{gather}
\lambda|\varphi_{*}|^{\mathbf{p}_{c}-1}-\alpha_{0}\mathbf{u}_{*}{\Tilde{v}}_{*}-\beta_{1}(\mathbf{a}-\varphi_{*})^{\mathbf{p}_{c}-1}_{+}+\beta_{2}(\varphi_{*}-\mathbf{b})^{\mathbf{p}_{c}-1}_{+}=0,
\end{gather}
where $\tilde{v}_{*}$ verifies the system \eqref{utilde penal} with $\mathbf{u}_{*}=\mathcal{F}(\varphi_{*})$ the solution of \eqref{Eqstate} corresponding to $\varphi_{*}.$
\end{pf} 
Now, suppose that $\beta_i$ are large for $i=1,2,3$. Let $\epsilon$ be a  positive real  parameter, and consider the following approximate control problem $(\bar{\P}_{\epsilon})$:
      \begin{gather*}
    \begin{split}
    \label{control problem modif}
\inf_{ \varphi\in \mL^{\mathbf{p}_{c}}(\cal{Q})}\bar{J}_{\epsilon}(\varphi),
\end{split}
\end{gather*}
where
\begin{gather*}
\begin{split}
 \bar{J}_{\epsilon}(\varphi)=J(\varphi)
 +\frac{1}{\epsilon\mathbf{p}_{c}}\int_{\cal Q}(\mathbf{a}-\varphi)^{\mathbf{p}_{c}}_{+} \d\x\d t+\frac{1}{\epsilon\mathbf{p}_{c}}\int_{\cal Q}(\varphi-\mathbf{b})^{\mathbf{p}_{c}}_{+}
 \d\x\d t
 +\frac{1}{4\epsilon}\int^{T}_{0}(\mathcal{G}(\varphi))^{2}_{+}\d t .
\end{split}
\end{gather*}
We can state the following convergence results.
\begin{proposition}
Every limit point $\varphi_{**}$ of the sequence of solutions $(\varphi_{\epsilon})_{\epsilon>0}$ of $(\bar{\P}_{\epsilon})$ satisfies  $\varphi_{**}\in\mathcal{U}_{ad},$ $\mathcal{G}(\varphi_{**})\in\mathcal{K}$, and $J(\varphi_{**})=\inf_{\varphi\in\mathcal{U}_{ad}} J(\varphi).$
\end{proposition}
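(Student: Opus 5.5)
We use the classical exterior-penalty argument. Fix an optimal control $\varphi_*$ of $(\P)$, whose existence is given by Theorem \ref{existence solution of P}. Since $\varphi_*\in\mathcal{U}_{ad}$ and $\mathcal{G}(\varphi_*)\in\mathcal{K}$, all three penalty terms vanish at $\varphi_*$. Hence, for every $\epsilon>0$ and every solution $\varphi_\epsilon$ of $(\bar{\P}_\epsilon)$,
\begin{gather*}
J(\varphi_\epsilon)+\frac{1}{\epsilon\mathbf{p}_c}\int_{\mathcal{Q}}\big((\mathbf{a}-\varphi_\epsilon)^{\mathbf{p}_c}_{+}+(\varphi_\epsilon-\mathbf{b})^{\mathbf{p}_c}_{+}\big)\d\x\d t+\frac{1}{4\epsilon}\int_0^T(\mathcal{G}(\varphi_\epsilon))^2_{+}\d t=\bar{J}_\epsilon(\varphi_\epsilon)\leq\bar{J}_\epsilon(\varphi_*)=J(\varphi_*).
\end{gather*}
This single inequality drives the whole proof. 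It gives three facts. First, $\frac{\lambda}{\mathbf{p}_c}\lVert\varphi_\epsilon\rVert^{\mathbf{p}_c}_{\mL^{\mathbf{p}_c}(\mathcal{Q})}\leq J(\varphi_*)$, so $(\varphi_\epsilon)$ is bounded in $\mL^{\mathbf{p}_c}(\mathcal{Q})$. Second, the control penalties are $O(\epsilon)$. Third, $\int_0^T(\mathcal{G}(\varphi_\epsilon))_+^2\d t\leq 4\epsilon J(\varphi_*)$. Throughout, "limit point" is understood as a weak limit in $\mL^{\mathbf{p}_c}(\mathcal{Q})$ along a subsequence $\epsilon_k\to0$; such limits exist by reflexivity.

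\textbf{Feasibility of $\varphi_{**}$.} The functional $\varphi\mapsto\int_{\mathcal{Q}}(\mathbf{a}-\varphi)^{\mathbf{p}_c}_{+}$ is convex and continuous on $\mL^{\mathbf{p}_c}(\mathcal{Q})$, hence weakly lower semicontinuous. Its value at $\varphi_{**}$ is therefore at most $\liminf_k O(\epsilon_k)=0$, which gives $\varphi_{**}\geq\mathbf{a}$ a.e. The same argument gives $\varphi_{**}\leq\mathbf{b}$ a.e., so $\varphi_{**}\in\mathcal{U}_{ad}$.

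\textbf{State constraint.} Here we need $\mathcal{F}(\varphi_{\epsilon_k})\to\mathcal{F}(\varphi_{**})$ in a sense that controls $\lVert\mathcal{F}(\cdot)(t)\rVert^2_{\mL^2(\Omega)}$. Lemma \ref{faible continute de F} is stated on $\mathcal{U}_{ad}$, while the $\varphi_\epsilon$ need not lie there. However, the proof of that lemma only uses boundedness of the controls in $\mL^{\mathbf{p}_c}$, the a priori bound of Corollary \ref{Existence and stability}, and uniqueness. It therefore extends to bounded sequences in $\mL^{\mathbf{p}_c}(\mathcal{Q})$ whose limit is admissible. (If the maximum principle is needed, one may work with the truncations, which changes nothing since the penalties vanish asymptotically.)

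With this extension, Aubin--Lions gives $\mathbf{u}_{\epsilon_k}\to\mathbf{u}_{**}$ strongly in $\mL^2(\mathcal{Q})$. Since $\mathbf{U}_0\hookrightarrow\mC([0,T];\mL^2(\Omega))$ compactly by the Aubin--Lions--Simon lemma, the convergence also holds in $\mC([0,T];\mL^2(\Omega))$. Hence $\mathcal{G}(\varphi_{\epsilon_k})\to\mathcal{G}(\varphi_{**})$ in $\mC([0,T])$. Passing to the limit in $\int_0^T(\mathcal{G}(\varphi_{\epsilon_k}))_+^2\d t\leq4\epsilon_kJ(\varphi_*)$ yields $\int_0^T(\mathcal{G}(\varphi_{**}))_+^2\d t=0$. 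By continuity of $\mathcal{G}(\varphi_{**})$ this means $\mathcal{G}(\varphi_{**})\leq0$ on $[0,T]$, that is, $\mathcal{G}(\varphi_{**})\in\mathcal{K}$.

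\textbf{Optimality.} The map $\varphi\mapsto\frac12\int_{\mathcal{Q}}\mathcal{F}(\varphi)^2$ converges along the subsequence by the strong $\mL^2(\mathcal{Q})$ convergence of the states. The term $\frac{\lambda}{\mathbf{p}_c}\lVert\varphi\rVert^{\mathbf{p}_c}$ is weakly lower semicontinuous. Dropping the nonnegative penalty terms in the displayed inequality then gives
\begin{gather*}
J(\varphi_{**})\leq\liminf_k J(\varphi_{\epsilon_k})\leq J(\varphi_*)=\inf\{J(\varphi):\varphi\in\mathcal{U}_{ad},\ \mathcal{G}(\varphi)\in\mathcal{K}\}.
\end{gather*}
Since $\varphi_{**}$ is feasible, the reverse inequality holds, and equality follows.

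\textbf{Main obstacle.} The delicate step is the second one. It requires passing from weak convergence of non-admissible controls to uniform-in-time convergence of $\lVert\mathbf{u}(t)\rVert_{\mL^2(\Omega)}$. This needs the a priori bounds of Section \ref{setting2} to hold for controls merely bounded in $\mL^{\mathbf{p}_c}$, and the compact embedding of $\mathbf{U}_0$ into $\mC([0,T];\mL^2(\Omega))$. If that embedding proves problematic, the fallback is pointwise a.e. convergence in $t$ from $\mL^2(\mathcal{Q})$ convergence, combined with continuity of $\mathcal{G}(\varphi_{**})$.
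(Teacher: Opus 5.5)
Your proof is correct and follows the same exterior-penalty argument as the paper. You compare $\bar{J}_\epsilon(\varphi_\epsilon)\leq\bar{J}_\epsilon(\varphi_*)=J(\varphi_*)$, extract a weakly convergent subsequence, show the penalty vanishes at the limit, and conclude from $J(\varphi_{**})\leq J(\varphi_*)$. Where the paper only invokes continuity of $H$ (and tacitly lower semicontinuity of $J$) under weak convergence, you supply the justification. That is, weak lower semicontinuity of the convex control penalties, and the compactness of $\mathbf{U}_0$ in $\mC([0,T];\mL^2(\Omega))$ for the state penalty. You also rightly flag that $\mathcal{F}$ must be extended to non-admissible controls, which the paper assumes without comment.
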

  \begin{pf}
Let us consider  the following function $H$ 
     \begin{gather*}
        H(\Phi)=\frac{1}{\mathbf{p}_{c}}\int_{\cal{Q}}\!(\mathbf{a}-\Phi)^{\mathbf{p}_{c}}_{+}\d\x\d t+\frac{1}{\mathbf{p}_{c}}\int_{\cal{Q}}\!(\Phi-\mathbf{b})^{\mathbf{p}_{c}}_{+}\d\x\d t
 +\frac{1}{4}\int^{T}_{0}\!(\mathcal{G}(\Phi))^{2}_{+}\d t, \mbox{ with } \bar{J}_{\epsilon}(\Phi)=J(\Phi)
 + \frac{H(\Phi)}{\epsilon}.
     \end{gather*}
We have that $H({\varphi_{*}})=0$, at the solution $\varphi_{*}$ of $(\P)$, and so
     \begin{gather}
     \label{ineg suite}
         J(\varphi_{\epsilon})\leq \bar{J}_{\epsilon}(\varphi_{\epsilon})\leq \bar{J}_{\epsilon}({\varphi_{*}})=J({\varphi_{*}}).
     \end{gather}
     Then, $\varphi_{\epsilon}$ is bounded in $\mL^{\mathbf{p}_{c}}(\cal{Q}),$ we can therefore extract
of the family $(\varphi_{\epsilon})$ a sequence $(\varphi_{{\epsilon}_k})$ which converges weakly to a limit $\varphi_{**}\in \mL^{\mathbf{p}_{c}} (\mathcal{Q})$ when $\epsilon_{k}\longrightarrow 0.$ Moreover, we have according to \eqref{ineg suite}
\begin{equation}
     0\leq H(\varphi_{\epsilon_{k}})= \epsilon_{k}(\bar{J}_{\epsilon_{k}}(\varphi_{\epsilon_{k}})-J(\varphi_{\epsilon_{k}}))\leq \epsilon_{k}(J({\varphi_{*}})-J(\varphi_{\epsilon_{k}})).
  \end{equation}    
Passing to the limit $\epsilon_{k}\longrightarrow 0,$ and using the fact that $H$ is continuous (since $H$ is a sum of continuous functions), and using \eqref{ineg suite}, we deduce that $H(\varphi_{**})=0,$ which implies $\varphi_{**}\in\mathcal{U}_{ad},$ and $\mathcal{G}(\varphi_{**})\in\mathcal{K}.$ Again, according to \eqref{ineg suite},  we have $J(\varphi_{**})\leq J({\varphi_{*}})$ and we  deduce that $J(\varphi_{**})=\inf_{\varphi\in\mathcal{U}_{ad}} J,$ which concludes the proof.
  \end{pf}
\begin{rmk}
(i) A brief discretization of this approach and numerical simulations of optimal solutions for eradicating malignant breast tumours were recently proposed in \cite{LBH2, LBH1} with some realistic data.  \\
(ii) Note that since the optimal solutions of $(\P)$ are not unique, every limit point of the sequence of solutions of $(\bar{\P}_\epsilon)$ belongs to the set of optimal solutions of $(\P)$.
     \end{rmk}
\subsection{Illustrative Simulations}
We use $2D$ lung domain for $\Omega$ ($m=2$) and consider $\rho_1=3$ and $\rho_2=2$ in the problem \eqref{Eqstate}. Then from the point $(I_2)-\rm{(iii)}$ of \textit{\textbf{(H2)}}, we have $\mathbf{p}_{c}=2\rho_1/(\rho_1 -1)=3.$ From the Proposition \ref{gradJtildeeps}, the optimal control $\varphi_{*}$ and state $\mathbf{u}_*$ satisfy (with $\mathbf{p}_{c}=3$) the optimality conditions \eqref{cequi0}-\eqref{ceqi}.\\
 In all our simulations, we considered a metronomic treatment, see, e.g., \cite{BocKer2016} spanning $ T = 20 $ days, where a low dose of drugs is administered continuously throughout the treatment period.
For the discretization of \eqref{Eqstate}, we used the implicit Euler method with the Newton method, and $\text{P}^{1}$ Lagrange finite elements. The numerical simulations presented below were obtained using \texttt{FreeFem++} \cite{Hecht2012}. To solve the penalized control problem, we use the \texttt{BFGS} (Broyden-Fletcher-Goldfarb-Shanno) algorithm available on \texttt{FreeFem++}. 
In our numerical experiments, the \texttt{BFGS} function is configured with the following parameters: \texttt{eps = 1e-9}, \texttt{nbiter = 10}, and \texttt{nbiterline = 1}.

 To set up the state problem \eqref{Eqstate}, we used some data from \cite{EfB2021,SKMa2023} (for $(t,\x)\in (0; T)\times \Bar{\Omega}$):\\
 $ \mD=\mD_{0}\e^{-\gamma_{0}\sigma}$, $\sigma=\sigma_{0}\e^{-\delta_{0}|{\x}|^{2}t}$, $\mK_{1}=k_{1}\e^{-\delta_{0}|{\x}|^{2}t}$, $\mK_{2}=\theta^{2}\mK_{1}$, $\mathbf{a}(\x)=0$, $\mathbf{b}(\x)=b_{0}\e^{- b_{1}[(x-x_{1})^{2}+(y-y_{1})^{2}]}$, $\mK_{3}=k_{3}{\mK_{1}}_{\Gamma}$, $\mD_{0}=d_{0}\eta_{1}$, $\eta_{1}=1+\varsigma_{1}\cos(\varsigma_{2}\eta_{2})$, $\eta_{2}=\varsigma_{3}\arctan\bigg(\frac{y-y_{0}}{\varsigma_{4}+\sqrt{(x-x_{0})^{2}+(y-y_{0})^{2}}}\bigg)$.\\
The discretization informations are:
 $\Delta t=0.2$ days (the time step),  $\x=(x,y)\in\RR^{2}$ (the $2D$-space position), $\Delta x =0.48mm$,  $\Delta y=0.51 mm$. Table \ref{datatab1} presents the data parameters considered in our simulations.

\begin{table}[htbp]
     \caption{Data parameters}
    \centering
   \begin{tabular}{|c|c|c|c|c|c|c|c|}
 \hline
Parameters & D$_{0}$ & $\delta_{0}$ & $k_{1}$ & $\alpha_{0}$ & $\sigma_{0}$&  $b_{0}$ &  $b_{1}$\\  
  \hline 
Values & 0.2 $mm^{2}/d$ &$5.55\cdot 10^{-5}/mm^{2}\cdot d$& $0.44/d$ & $1/\mu\text{M}\cdot d$\!\! & $25$ mmHg& 100 $\mu$M & 11.11$mm^{2}$\!\!\\
  \hline  
\end{tabular} 
    \centering
   \begin{tabular}{|c|c|c|c|c|c|c|c|c|c|c|c|c|c|c|c|c|}
\hline
\hline
$\gamma_{0}$ & $k_{3}$& $T$&  $\varsigma_1$ &  $\varsigma_{2}$&  $\varsigma_3$ &  $\varsigma_{4}$& $\theta$\\  
  \hline 
$ 1/mm$Hg & 3 $mm$& 20 $d$& $0.7$& $50$& $2$& $10^{-4}$ & $1.4$\\
  \hline  
\end{tabular} 
\label{datatab1}
\end{table}
In Table \ref{datatab1}, the millimeter of mercury ($mm$Hg) is the unit of measurement for pulmonary arterial pressure, the micro-molar ($\mu$M) is the unit of chemotherapeutic drug concentration and the millimeter ($mm$) is the unit of length, and $d$ means days.
 \subsubsection{Optimal solutions for the eradication of left lobe lung tumours}
We consider  
$u_{0}({\x})=\eta_{1}\e^{-\frac{1}{\varepsilon}[(x-x_{0})^{2}+(y-y_{0})^{2}]} \mbox{ in $\Omega,$ }$ representing the initial tumour density in the lung, centred in $(x_{0},y_{0})=(1.34,3.8)$ with $\varepsilon=0.1.$ 
The mesh consists of $n_{me} = 1492$ elements and $n_{mp} = 821$ points.
The control  parameters are fixed as follows: $\lambda = 10^{-2}$, $\beta_1 = 10^3$, $\beta_2 = 3 \cdot 10^4$, $\beta_3 = 5 \cdot 10^6$, $(x_1, y_1) = (1.3, 3.5)$.\\
 For better clarity of the figures, different scales have been used for all figures. Figure \ref{tumst} shows the progression of tumour density in the absence of treatment, revealing a marked increase in density that extends further into the bronchi, confirming the aggressive nature of the cancer cells. Figure \ref{figerac1} demonstrates the feasibility of constraints applied to both the optimal control and state variables. Figure \ref{contAC} depicts the optimal drug concentration $\varphi_{*}$ and the optimal tumour density $\mathbf{u_{*}}$, where $\mathbf{u_{*}}$ exhibits a significant reduction, aligning with our expectations. In a realistic treatment protocol, therapy ceases once tumour density becomes undetectable. However, Figure \ref{1dec} indicates a substantial drug concentration in the lung, even as tumour density declines, due to the continuous 20-day treatment. As anticipated, the optimal tumour density decreases concurrently with the drug concentration.

\begin{figure}
  \centering
        \includegraphics[width=0.26\textwidth]{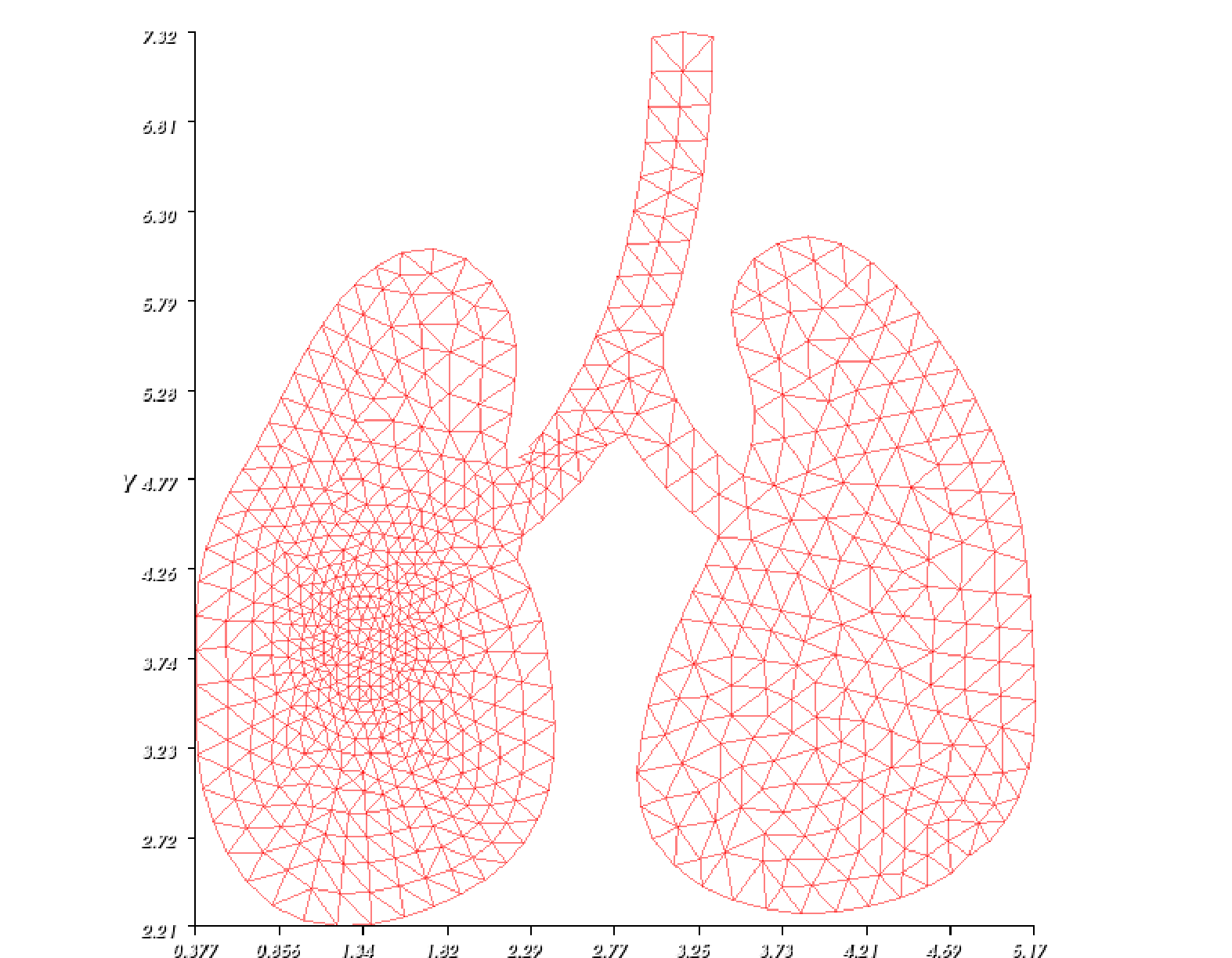}
        \includegraphics[width=0.26\textwidth]{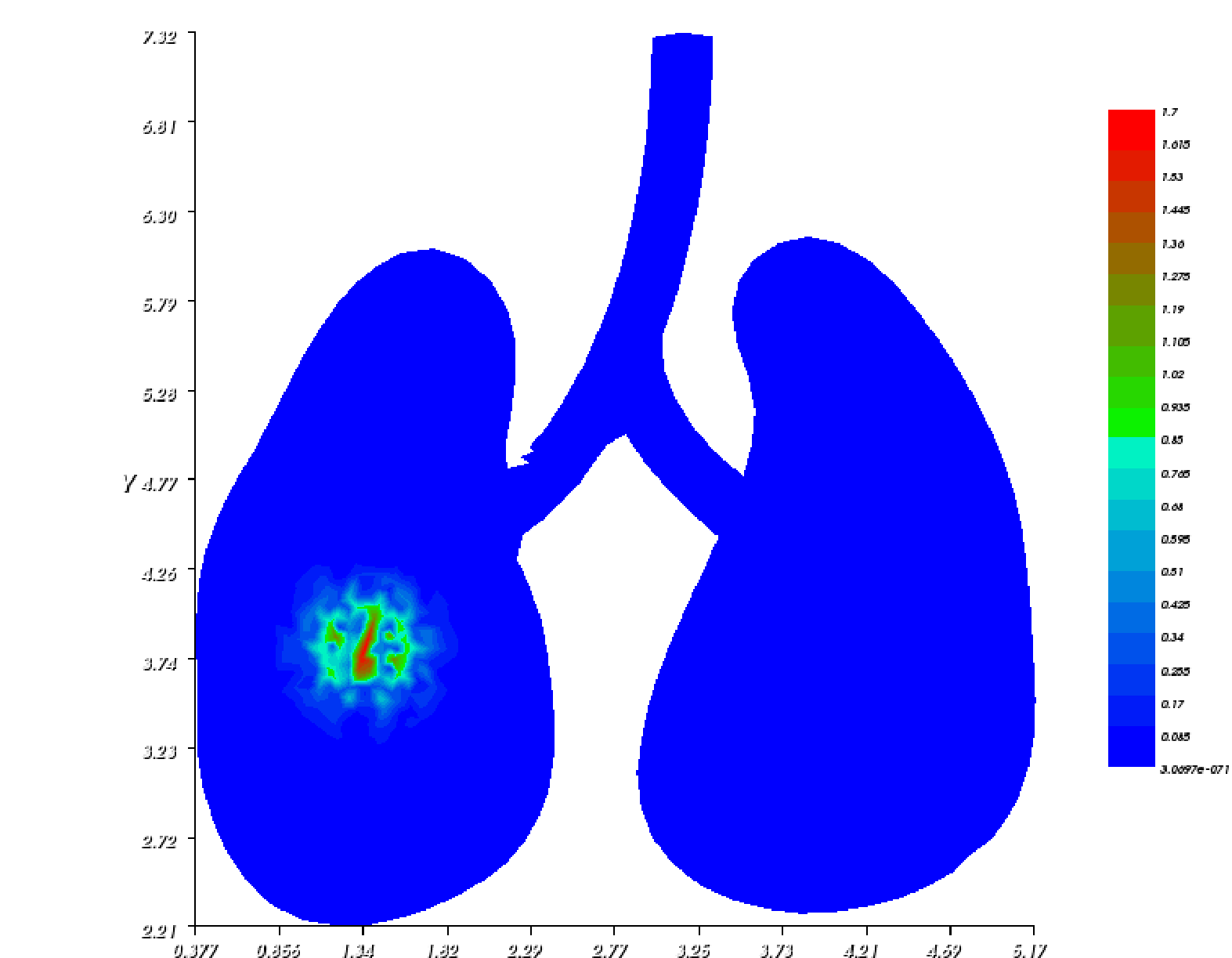} 
         \caption{The lung domain $\Omega$ on the left and the initial tumour density $u_{0}$ on the right.} 
       \label{u01}
    \end{figure}
\begin{figure} 
\centering
        \includegraphics[width=0.245\textwidth]{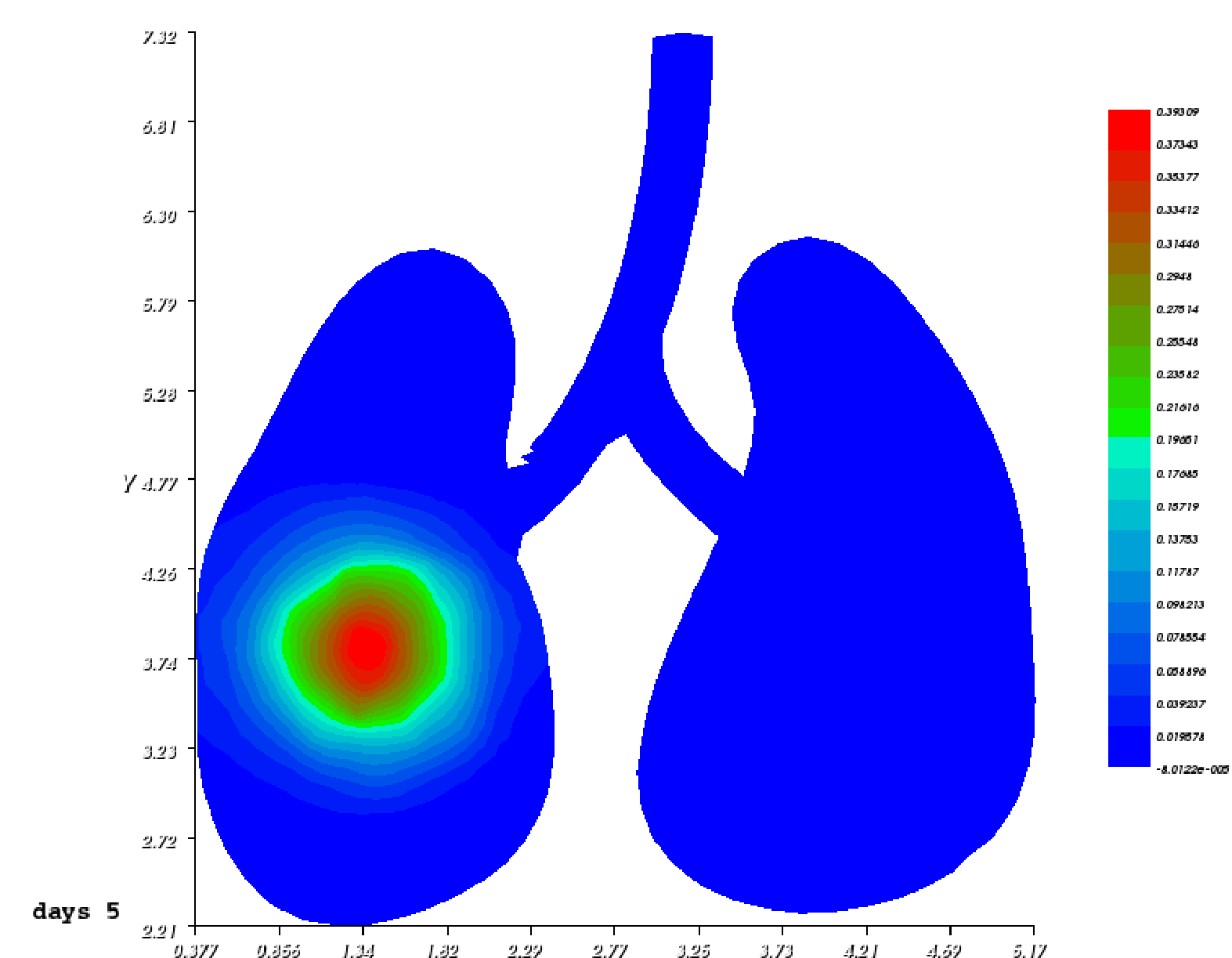}
        \includegraphics[width=0.245\textwidth]{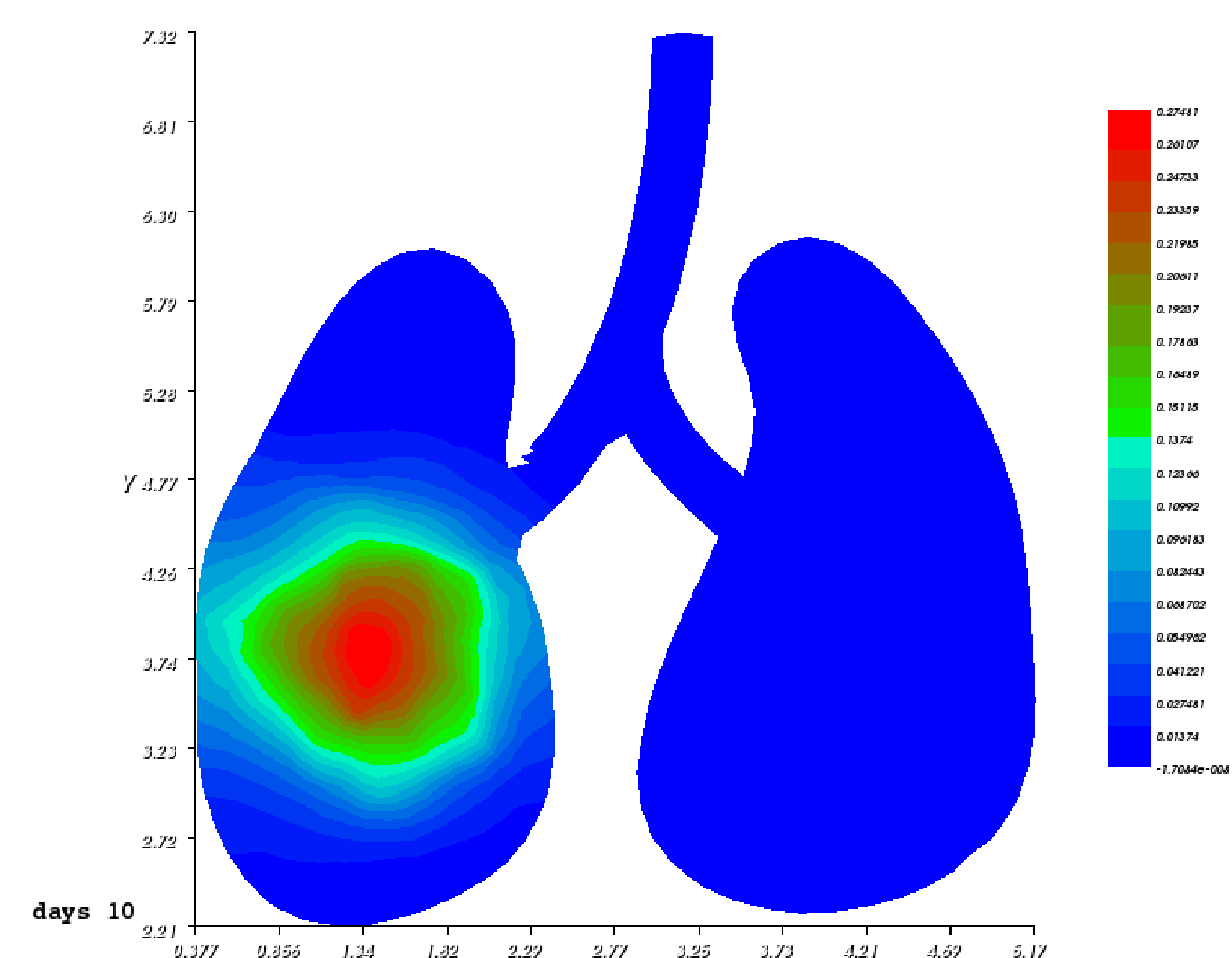}
        \includegraphics[width=0.245\textwidth]{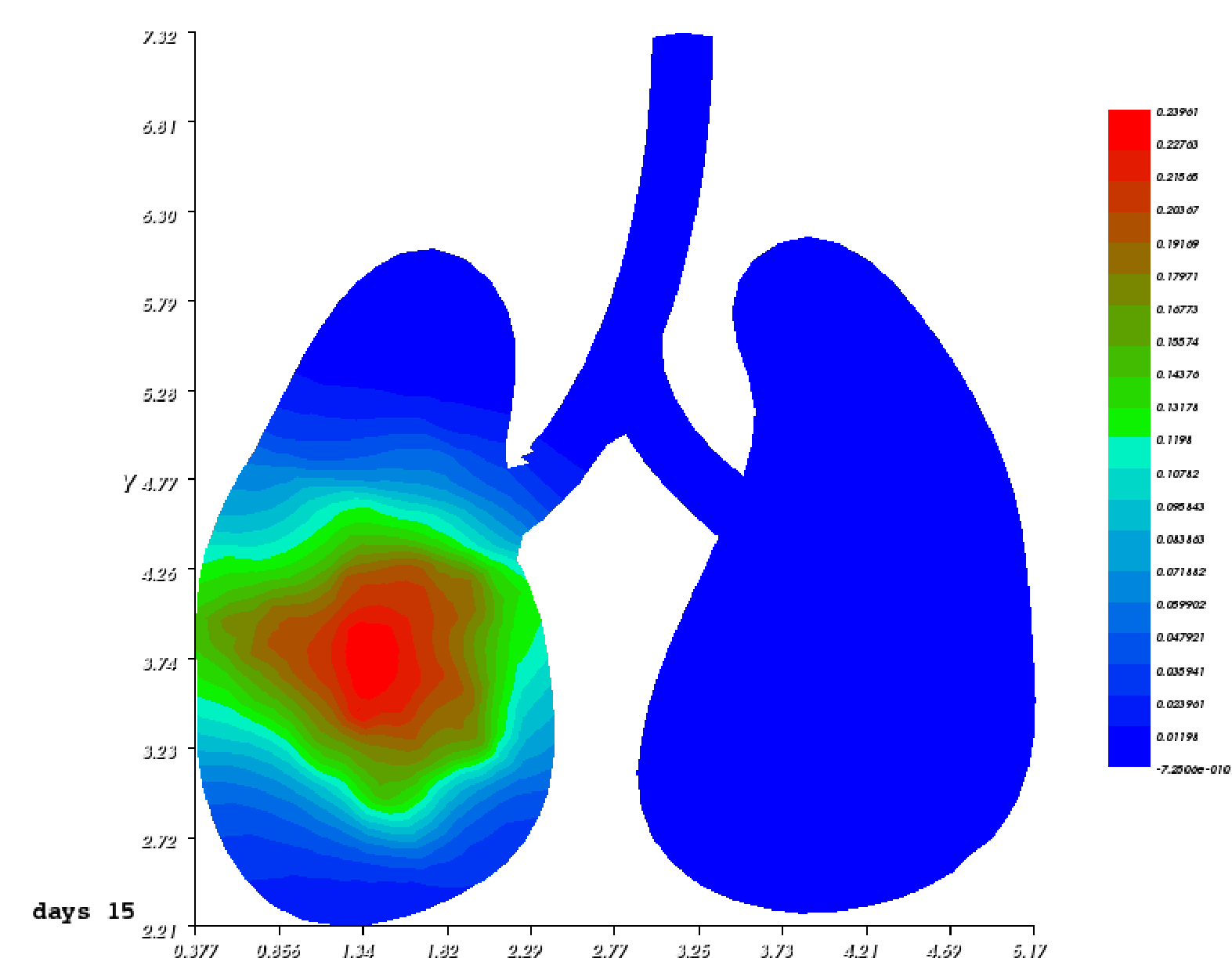}
        \includegraphics[width=0.245\textwidth]{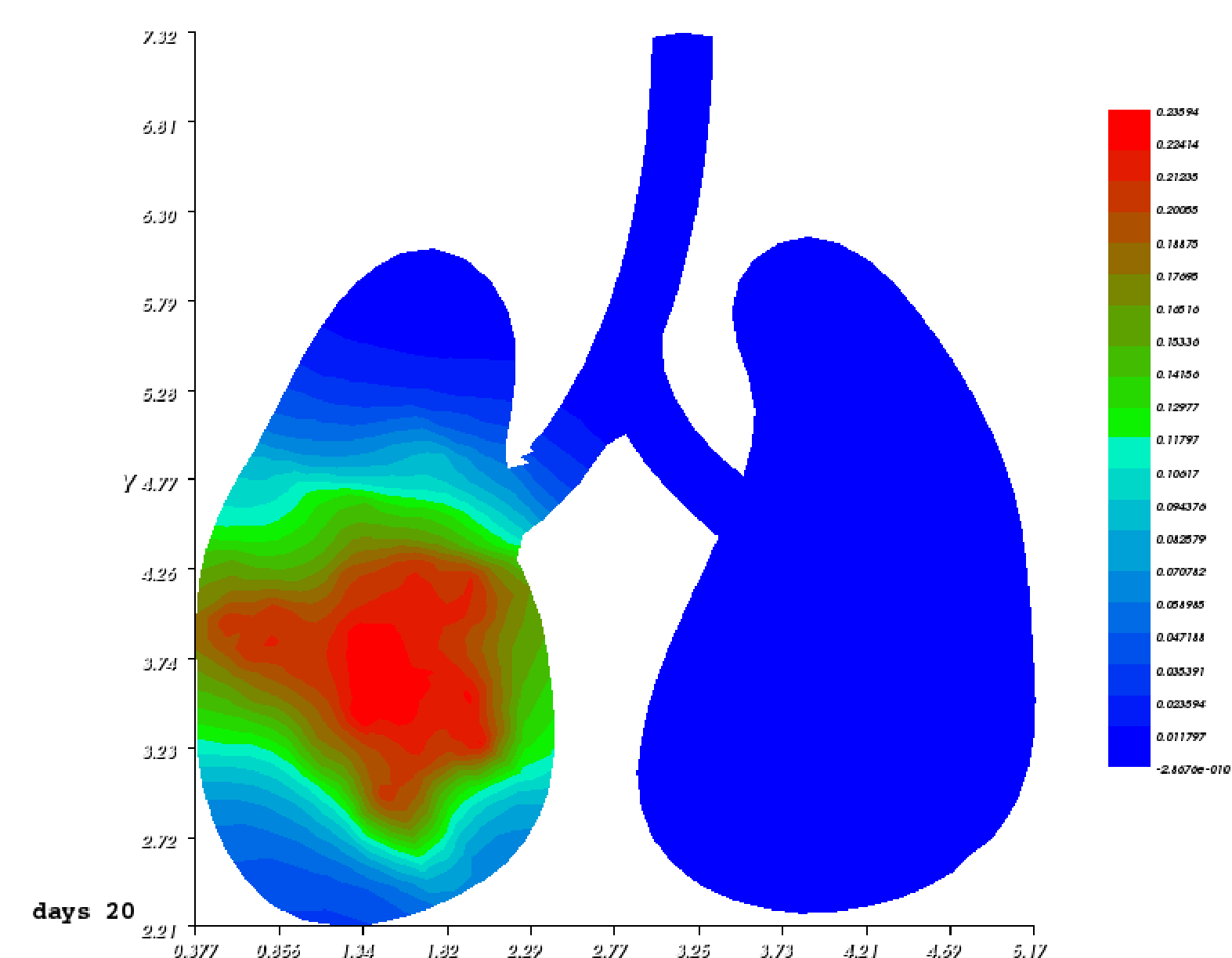}
          \caption{The tumour density evolution $\mathbf{u}$ without treatment, $i.e.,$ $\mathbf{u}  =\mathcal{F}(0)$ on 5th, 10th, 15th, 20th day, respectively, from left to right.}
      \label{tumst}  
   \end{figure} 
   \begin{figure}
   \centering
        \includegraphics[width=0.295\textwidth]{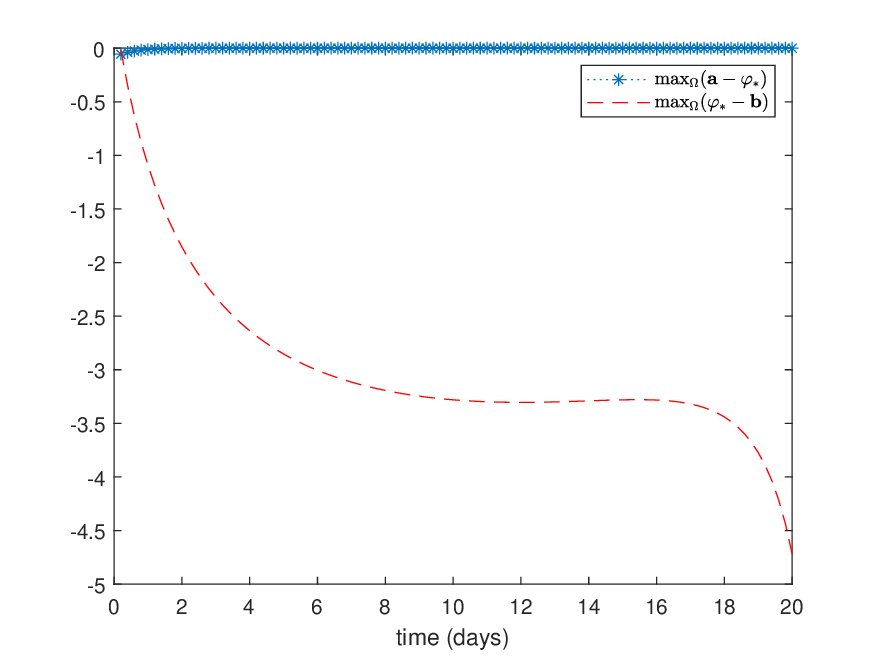}
        \includegraphics[width=0.295\textwidth]{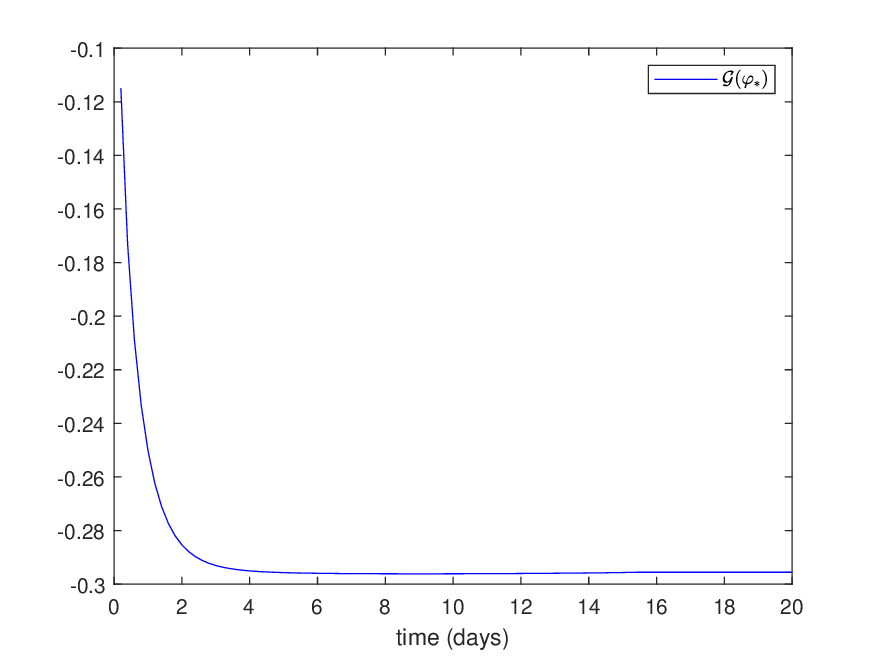}
         \caption{Evolution of the feasibility of the control $\varphi_{*}$ on the left and the state $\mathbf{u_{*}}$ on the right.}
          \label{figerac1}
   \end{figure}
    \begin{figure}
\centering
    \includegraphics[width=0.295\textwidth]{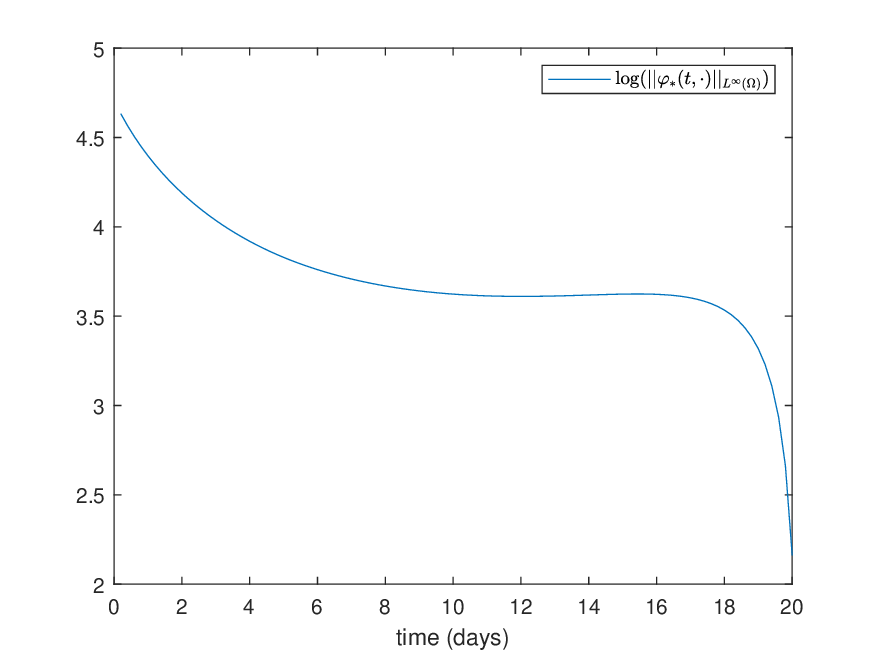}
        \includegraphics[width=0.295\textwidth]{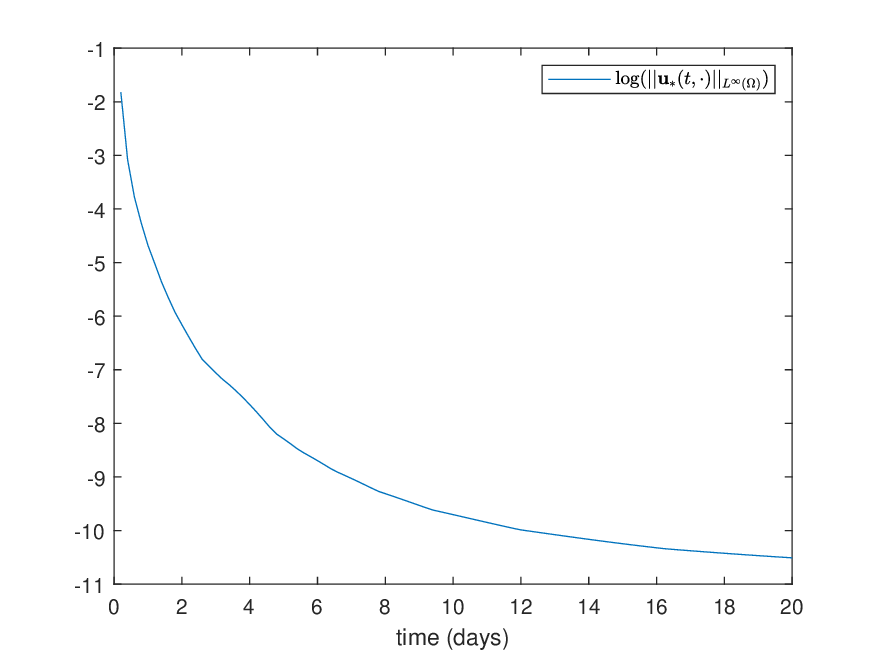}
           \caption{Evolution of $\log(\lVert\varphi_{*}\rVert_{\mL^{\infty}(\Omega)})$ on the left and $\log(\lVert\mathbf{u}_{*}\rVert_{\mL^{\infty}(\Omega)})$ on the right.}
 \label{1dec}
   \end{figure} 
    \begin{table}[htbp]
   \label{max tab1}
     \caption{The maximum and minimum value of the optimal tumour density. }
    \centering
   \begin{tabular}{|c|c|c|c|c|}
\hline
Time $(t)$ &  5{th} day&  10{th} day &  15{th} day & 20{th} day\\  
  \hline
 $\max_{\Omega}\mathbf{u_{*}}$& $0.00045$& $6.10\cdot 10^{-5}$& $3.54\cdot 10^{-5}$ & $2.72\cdot 10^{-5}$\\
  \hline  
  $\min_{\Omega}\mathbf{u_{*}}$& $1.55\cdot 10^{-7}$& $4.17\cdot 10^{-10}$& $8.78\cdot 10^{-11}$ & $1.14\cdot 10^{-12}$\\
  \hline
\end{tabular} 
\end{table}
\begin{figure}
\centering
        \includegraphics[width=0.22\textwidth]{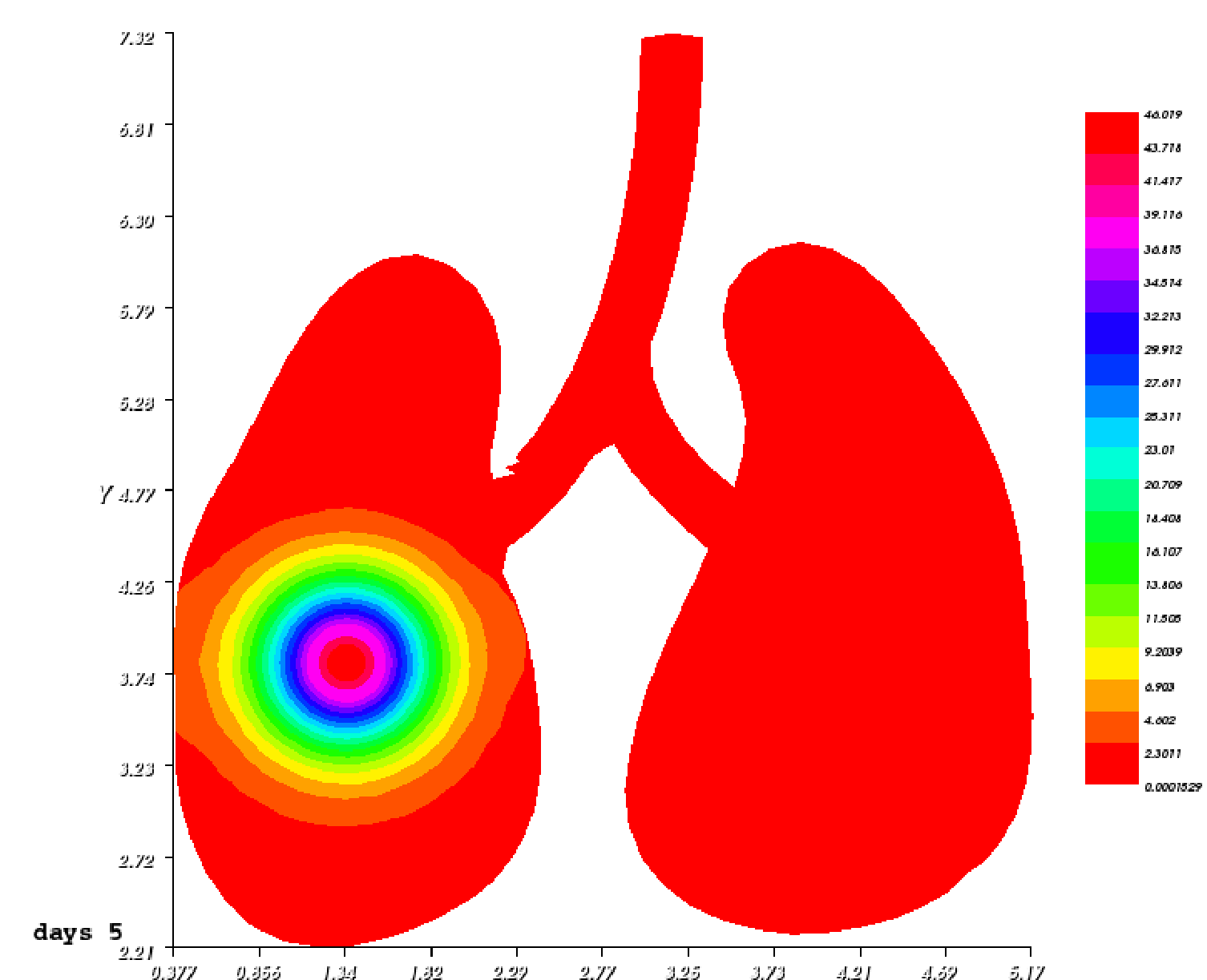}  
\includegraphics[width=0.22\linewidth]{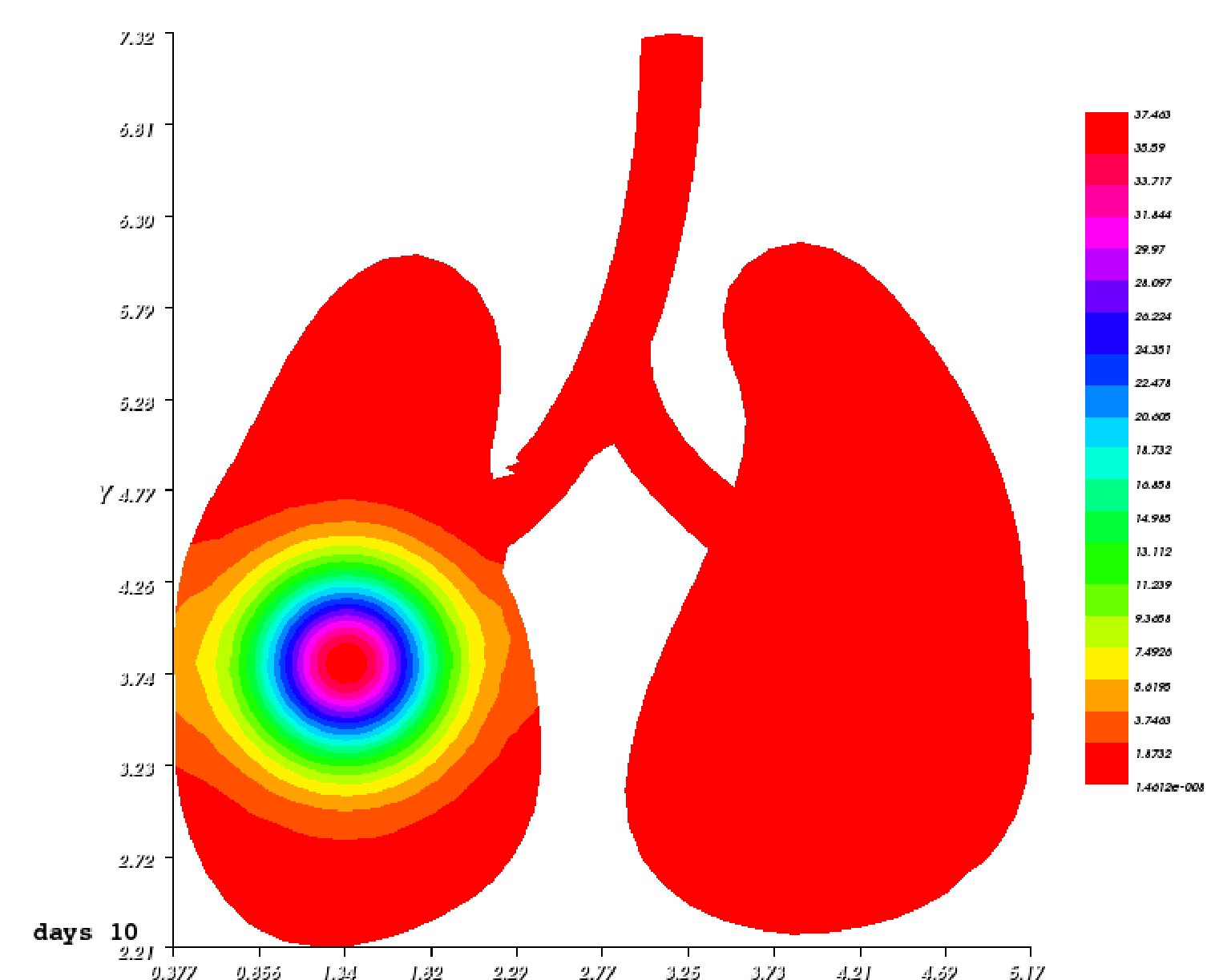}     
\includegraphics[width=0.22\textwidth]{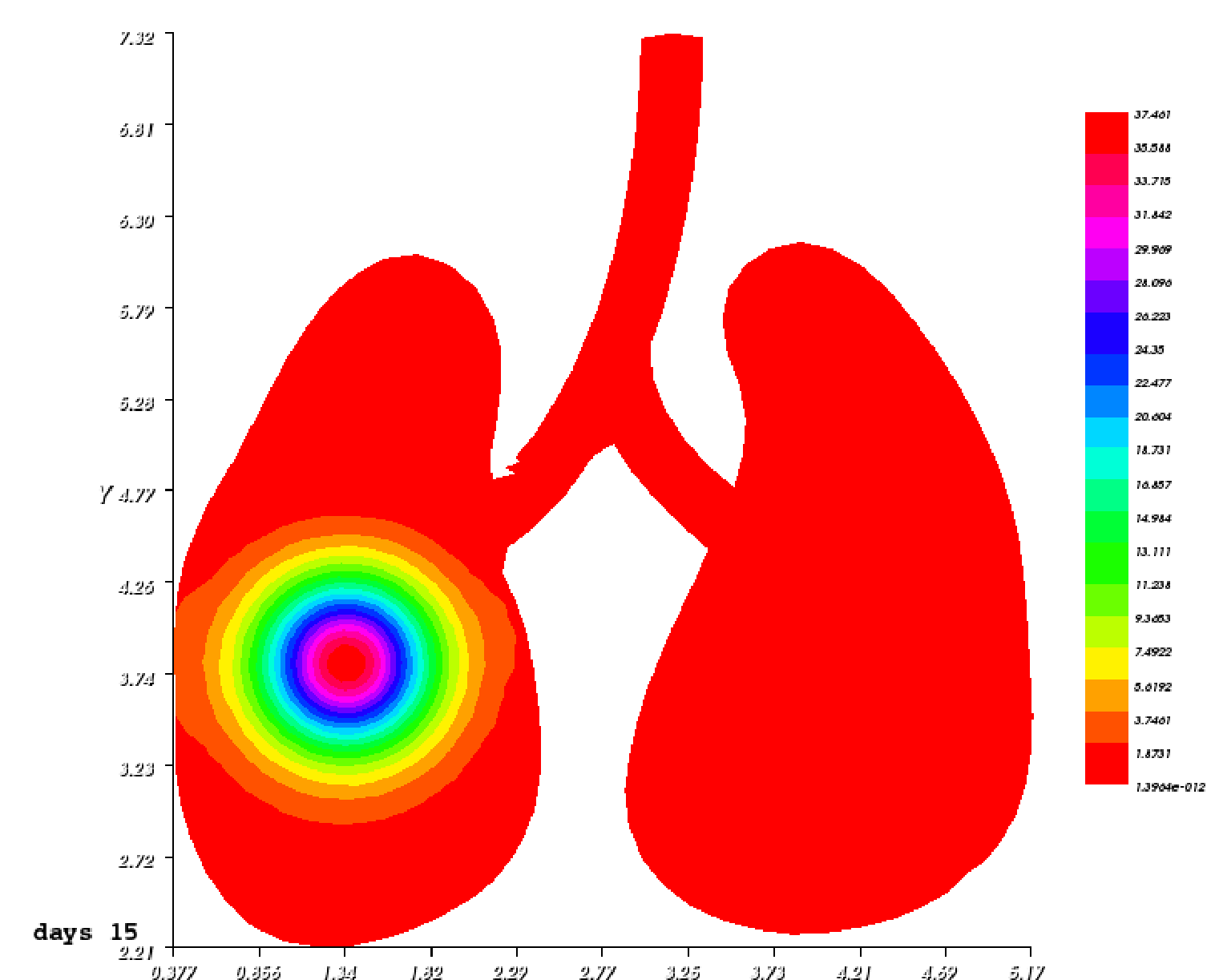}
 \includegraphics[width=0.22\textwidth]{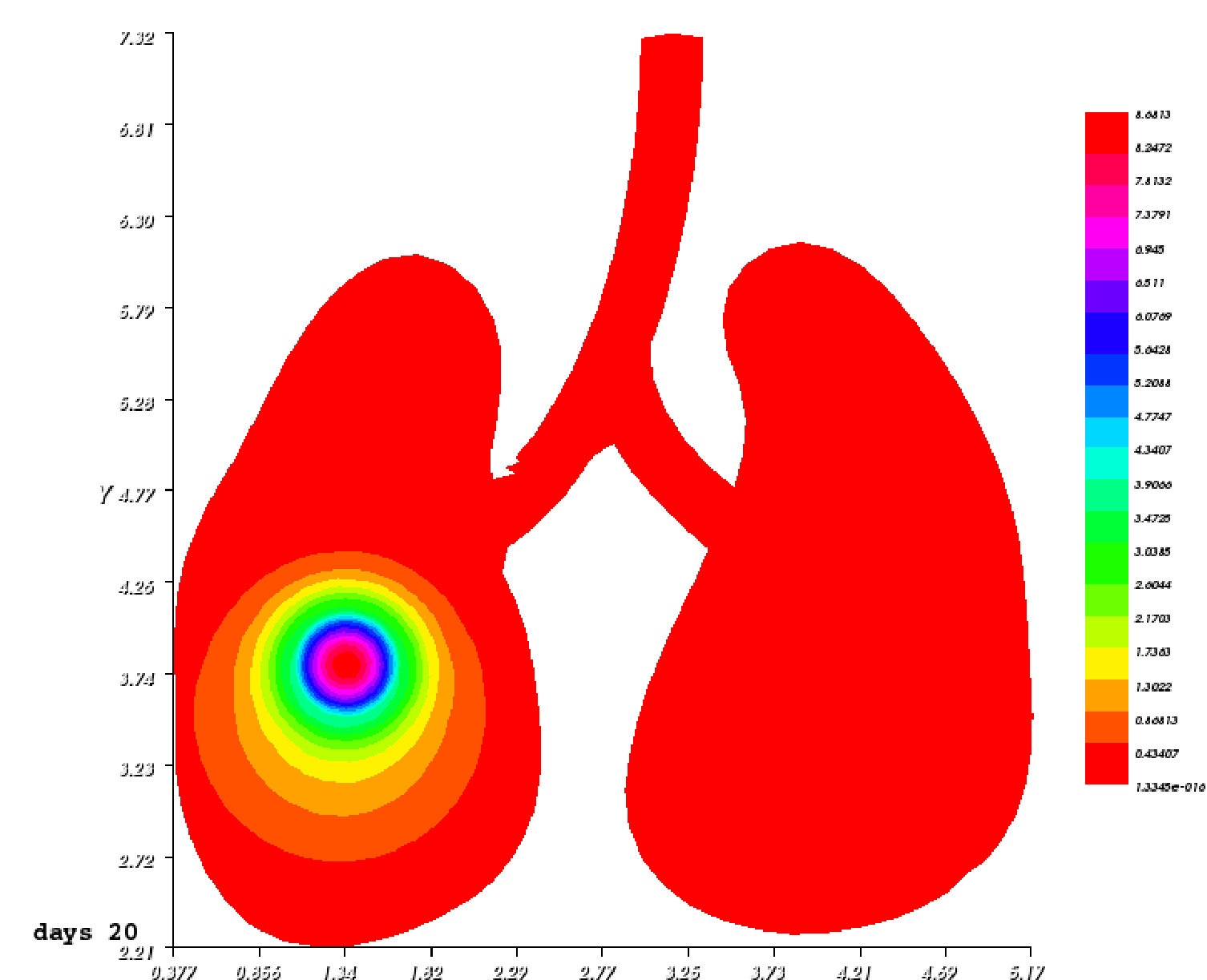}  
        \includegraphics[width=0.22\textwidth]{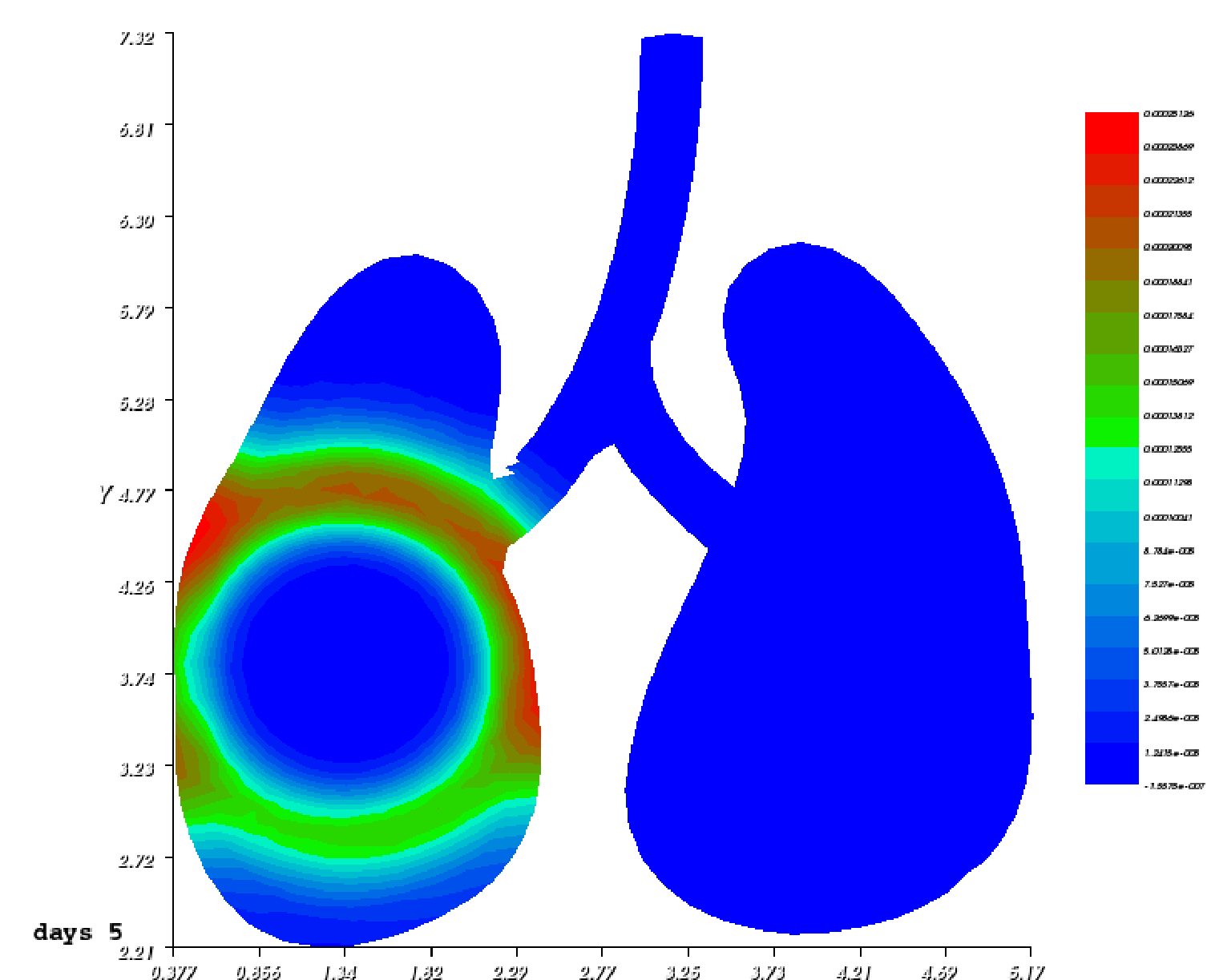}          
        \includegraphics[width=0.22\textwidth]{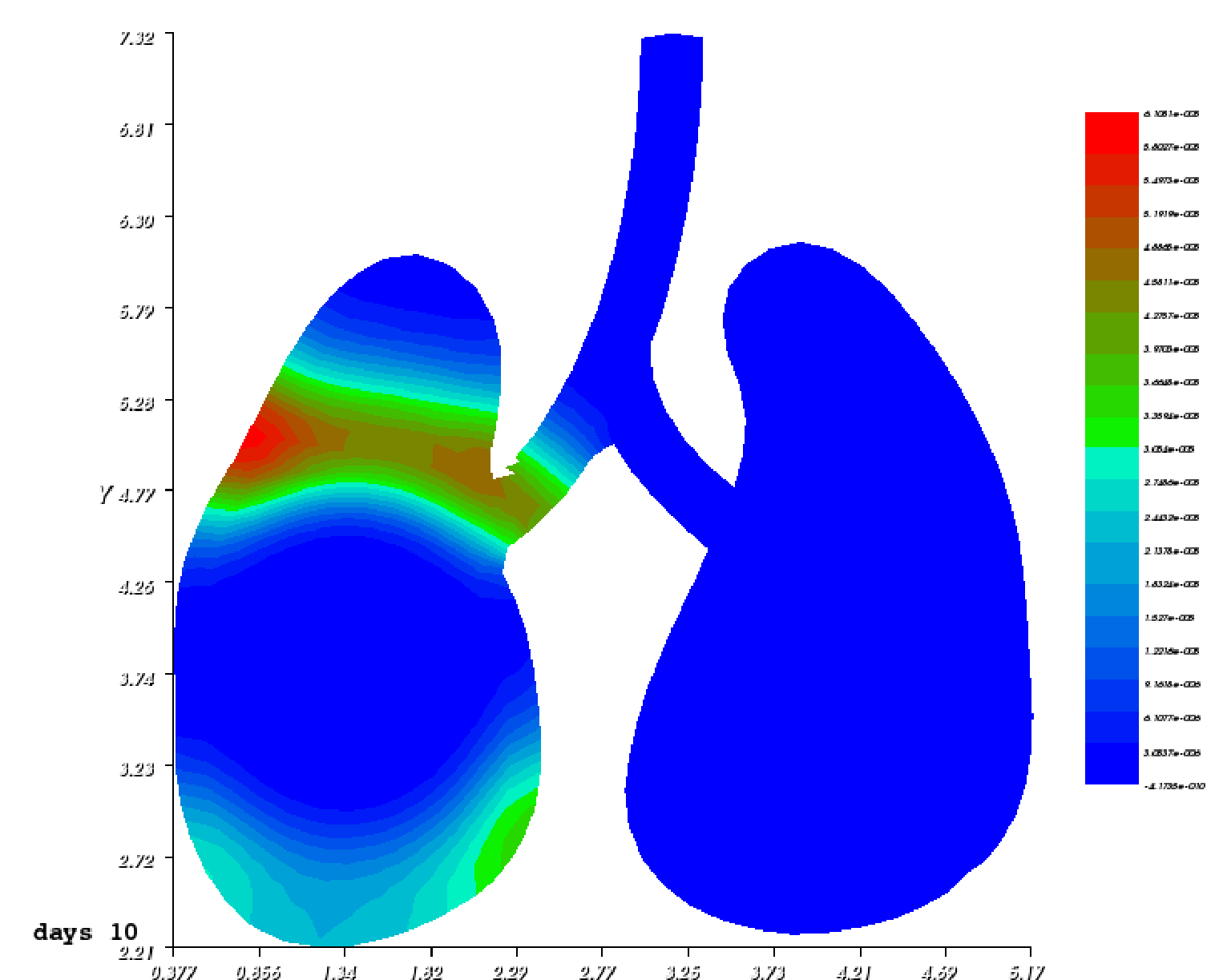}
        \includegraphics[width=0.22\textwidth]{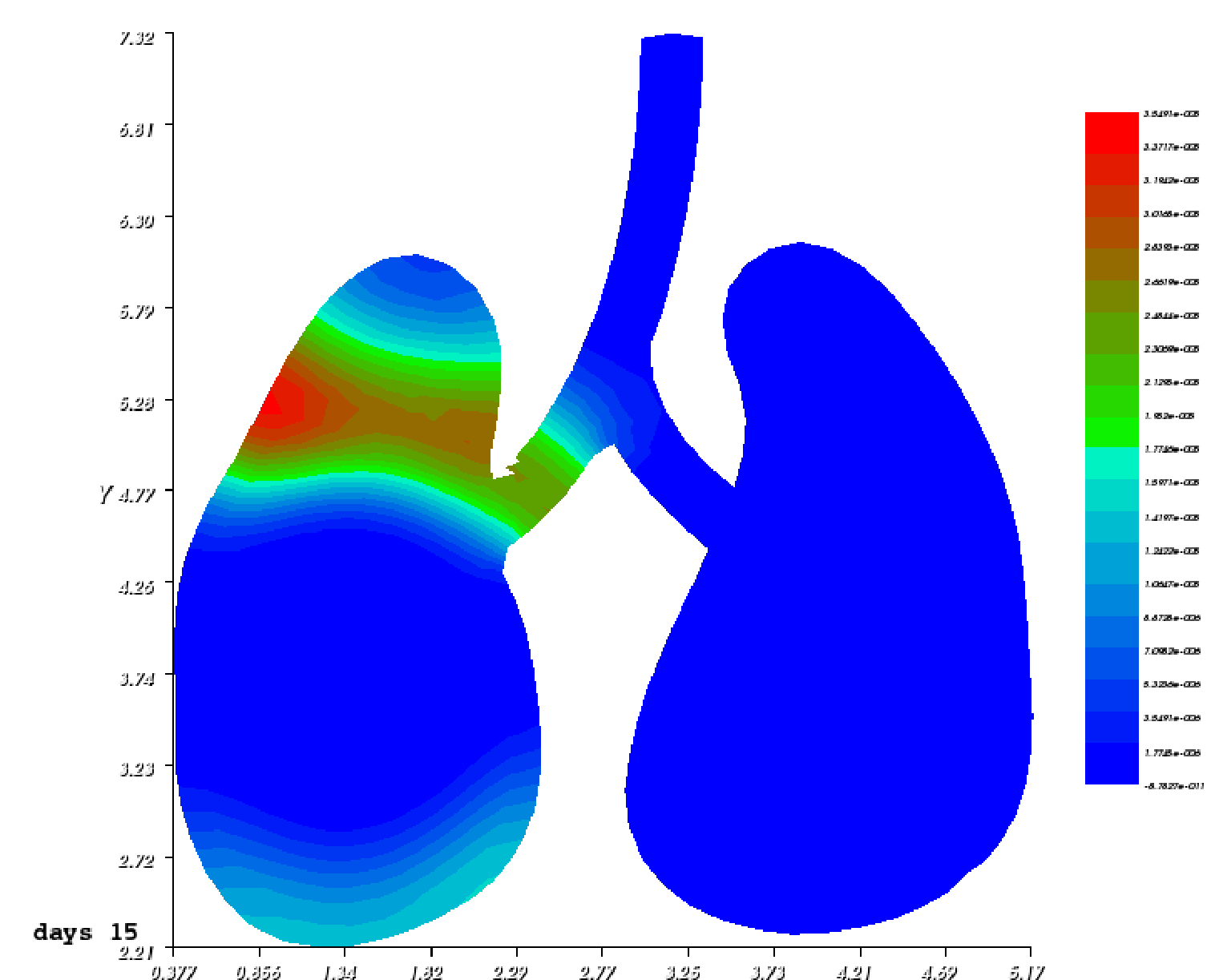} 
        \includegraphics[width=0.22\textwidth]{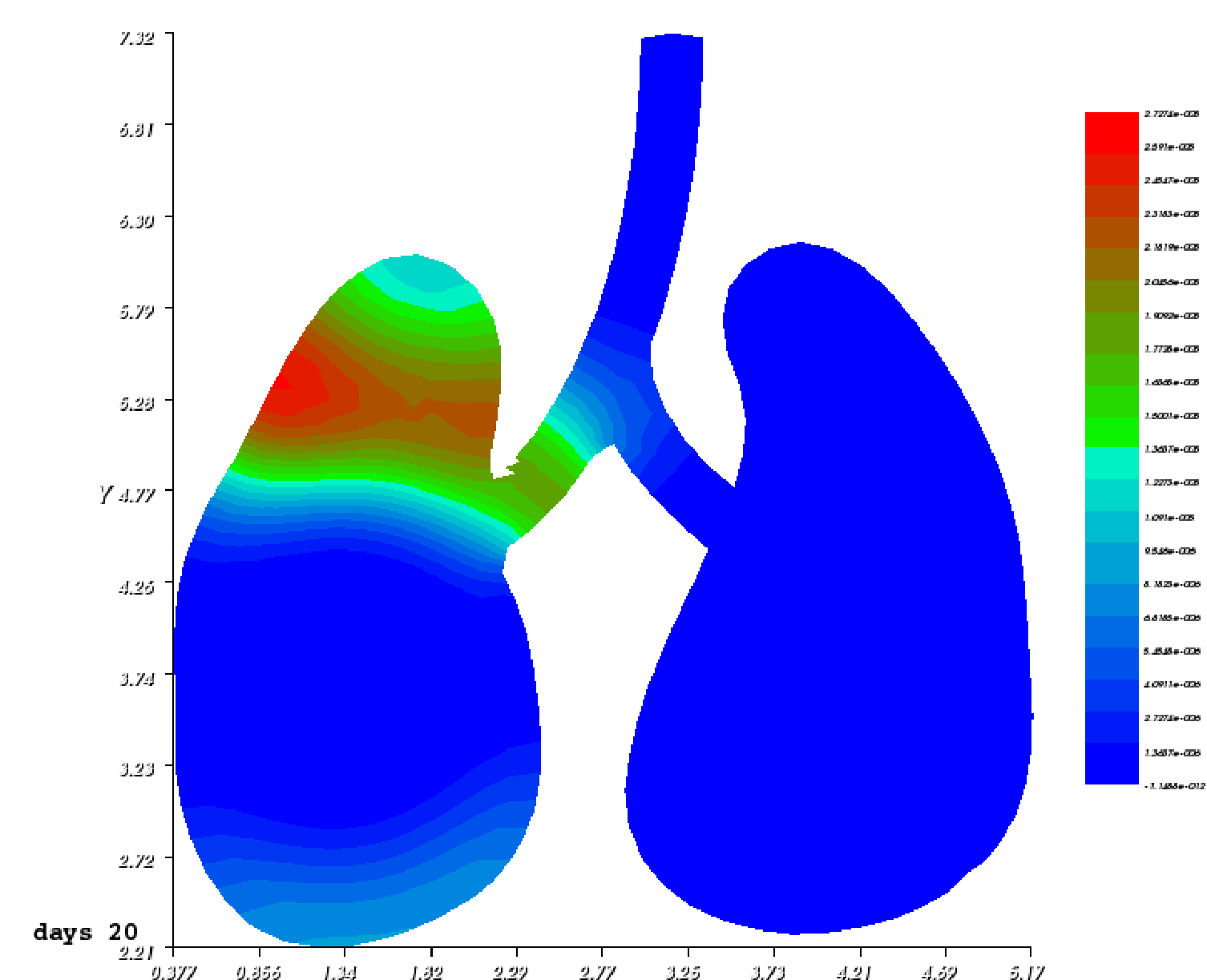}
         \caption{
 The optimal drug $\varphi_{*}$ at the top and the optimal tumour density $ 
         \mathbf{u}_{*}$ at the bottom.
          }
     \label{contAC}
   \end{figure} 
   
   For this case, the results are promising, and all the figures are consistent and meet our expectations.
 \subsubsection{Optimal solutions for the eradication of right lobe lung tumours}
For this case, we consider  
$u_{0}({\x})=\eta_{1}\e^{-\frac{1}{\varepsilon}[(x-x_{0})^{2}+(y-y_{0})^{2}]} \mbox{ in $\Omega,$ }$ representing the initial tumour density centred in $(x_{0},y_{0})=(3.8,4.5).$ Here, the mesh consists of
 $n_{me}=1444$ elements and $n_{mp}=797$ points. The control  parameters are fixed as follows: $\lambda=5\cdot 10^{-2}$, $\beta_{1}=2\cdot 10^{4}$, $\beta_{2}=10^{5}$, $\beta_{3}=10^{4}$, $(x_{1},y_{1})=(1.3,3.5)$.\\
  Figure \ref{2tumst} illustrates the progression of tumour density in the absence of treatment, showing an increase in tumour density within the right lobe, affecting the right bronchus. Figure \ref{figerac2} demonstrates the feasibility of the state and control constraints.  Figure \ref{2contAC} depicts the optimal drug concentration $\varphi_{}$ and the optimal tumour density $\mathbf{u_{}}$ over several days. As expected, $\mathbf{u_{*}}$ decreases as showed in Figure \ref{2dec}.
\begin{figure}
  \centering
        \includegraphics[width=0.245\textwidth]{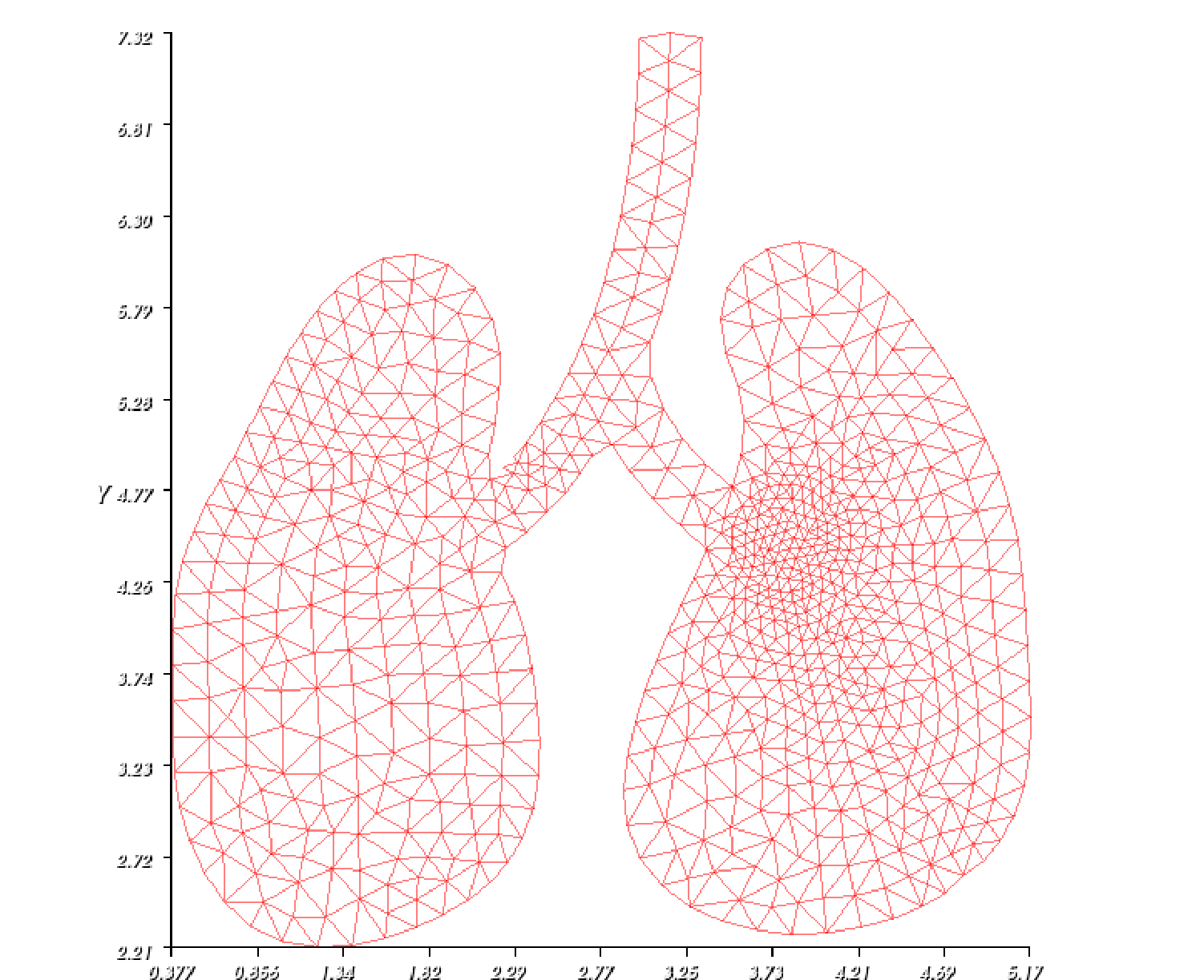}
        \includegraphics[width=0.245\textwidth]{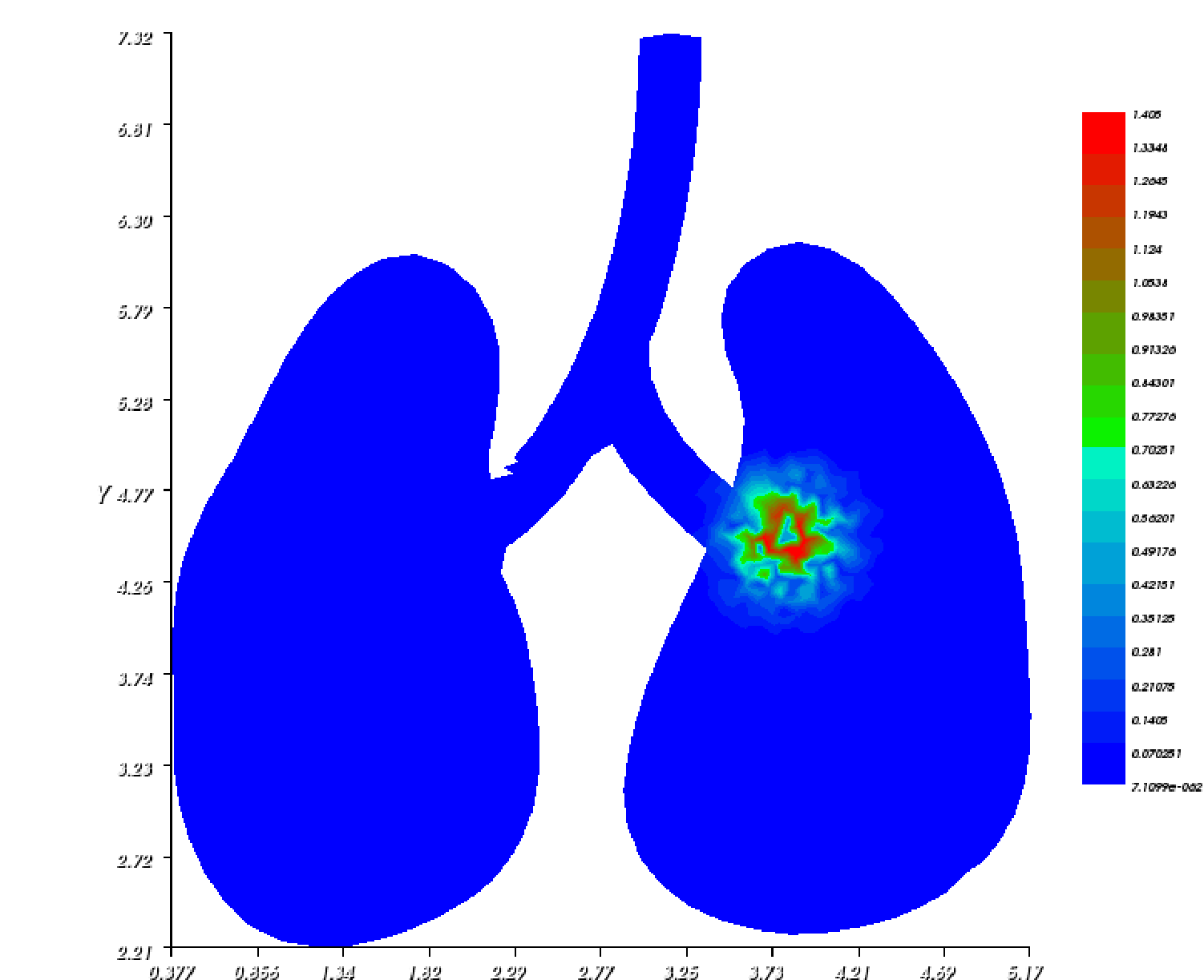}
         \caption{The lung domain $\Omega$ on the left and the initial tumour density cells $u_{0}$ on 
         the right.} 
       \label{2u0}
    \end{figure}
    \begin{figure}
\centering 
        \includegraphics[width=0.245\textwidth]{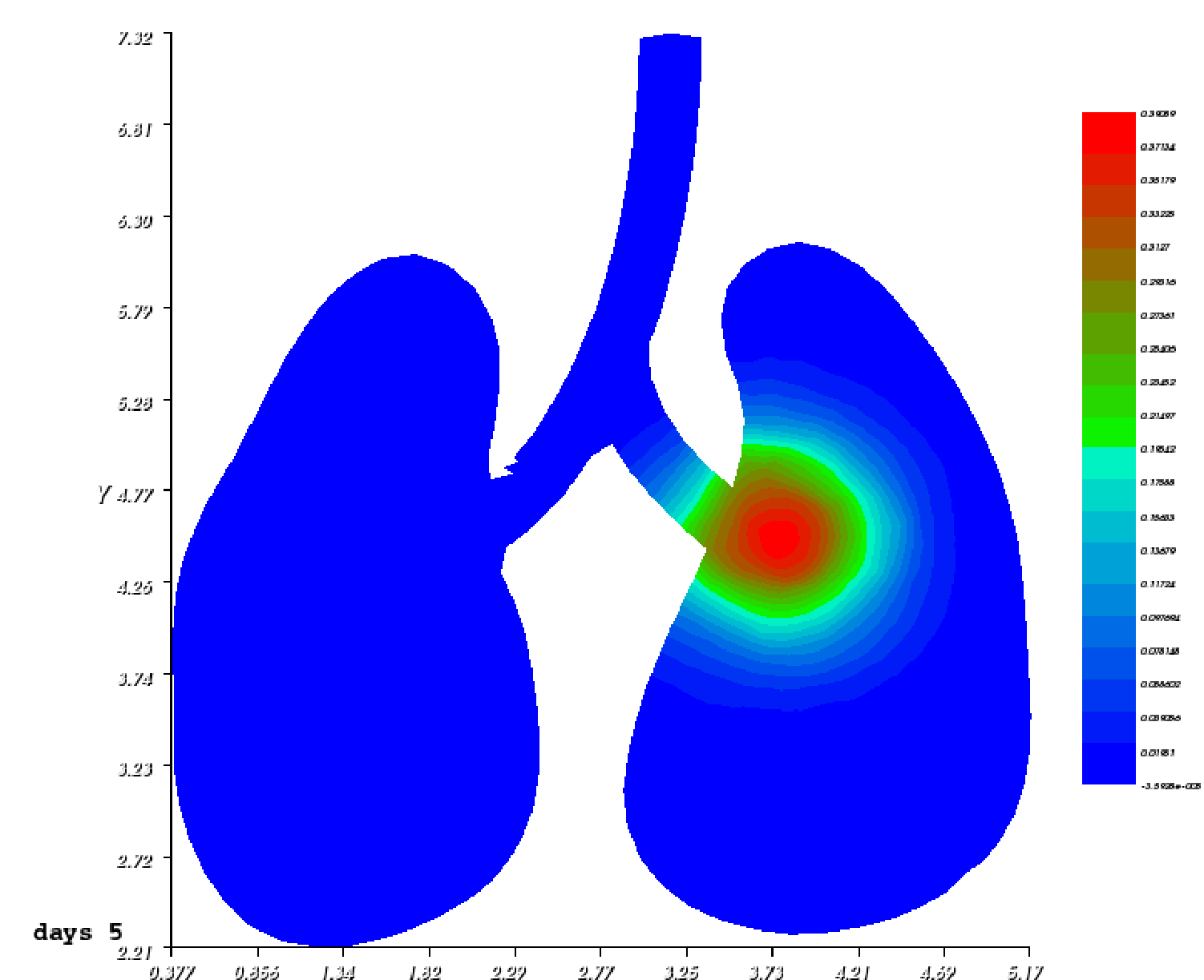}
        \includegraphics[width=0.245\textwidth]{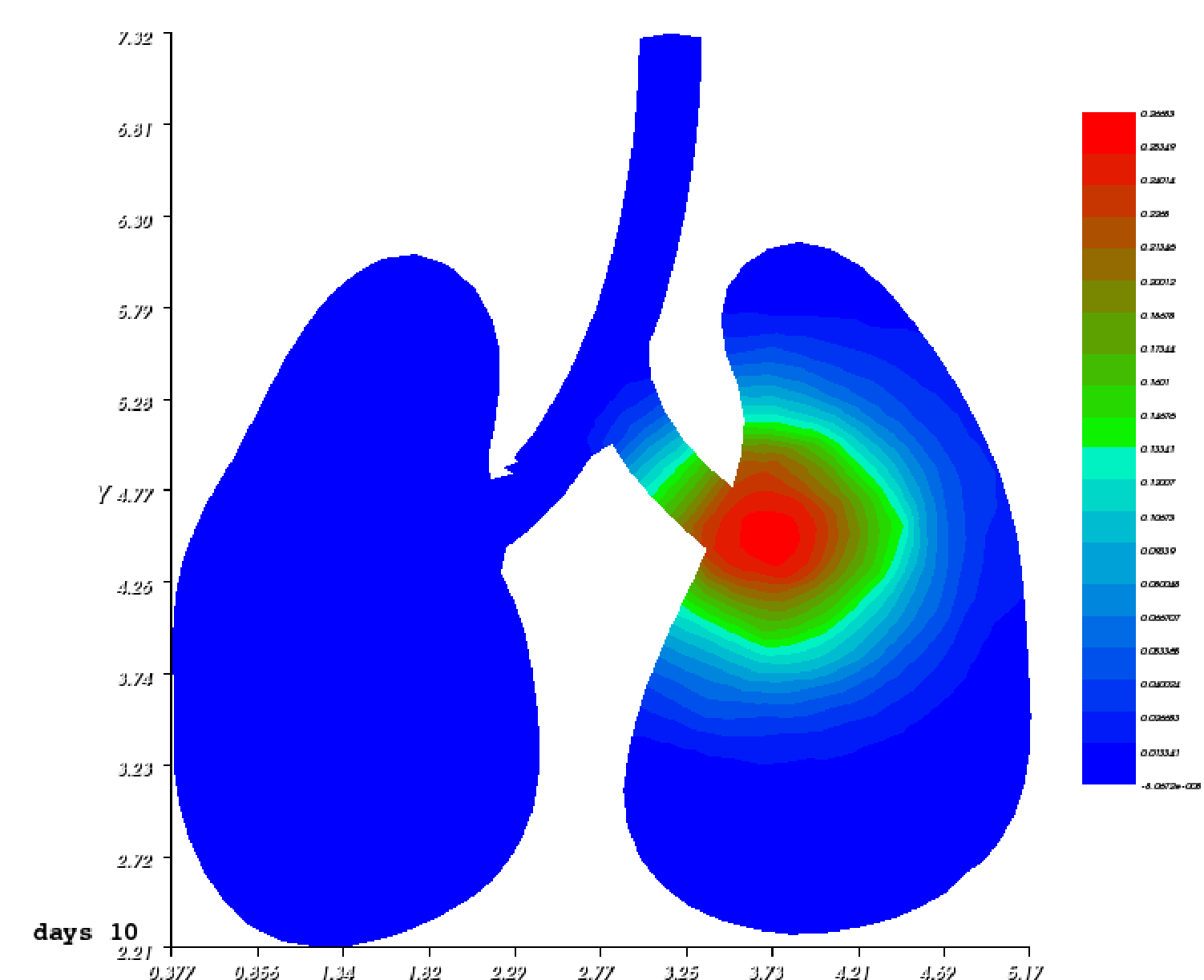}
        \includegraphics[width=0.245\textwidth]{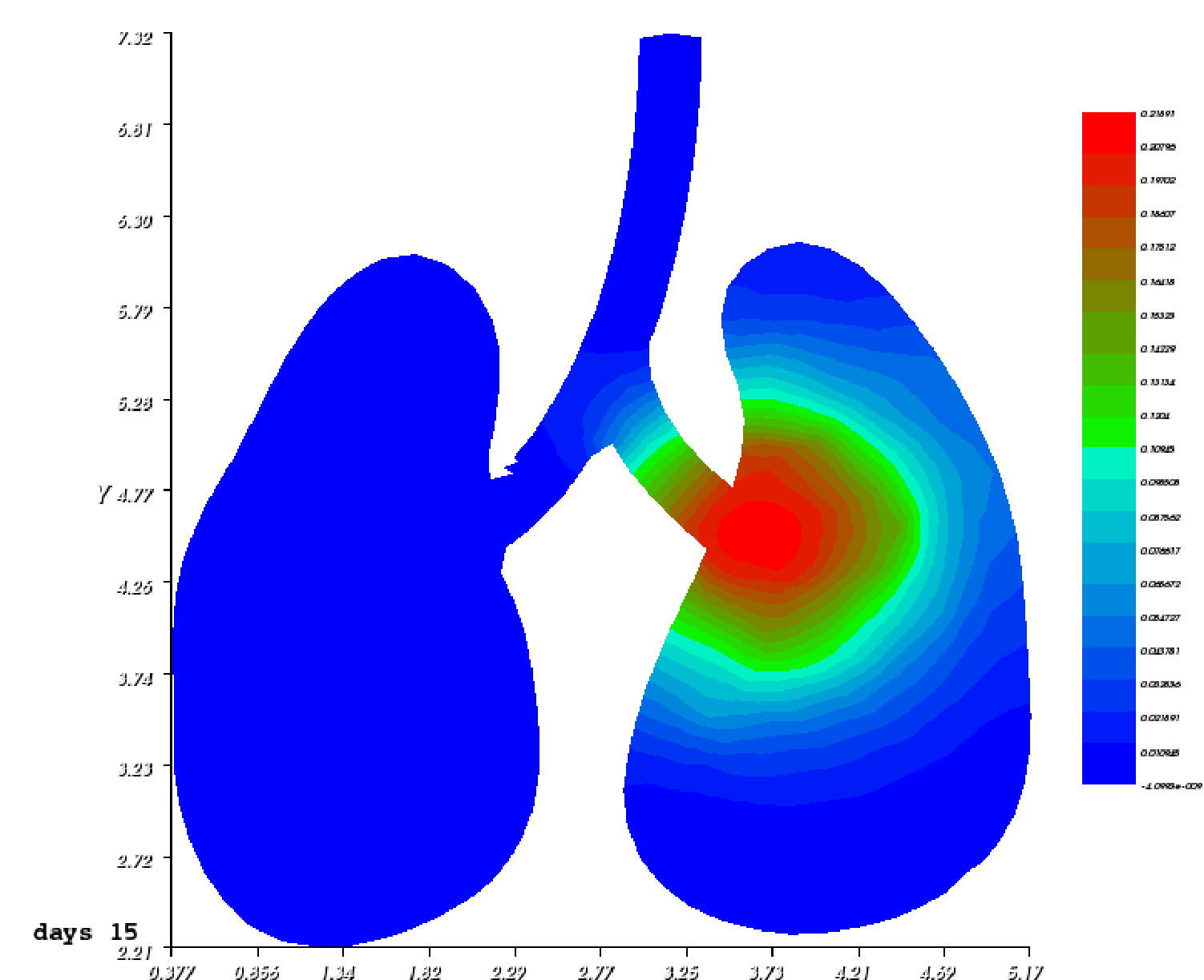}
        \includegraphics[width=0.245\textwidth]{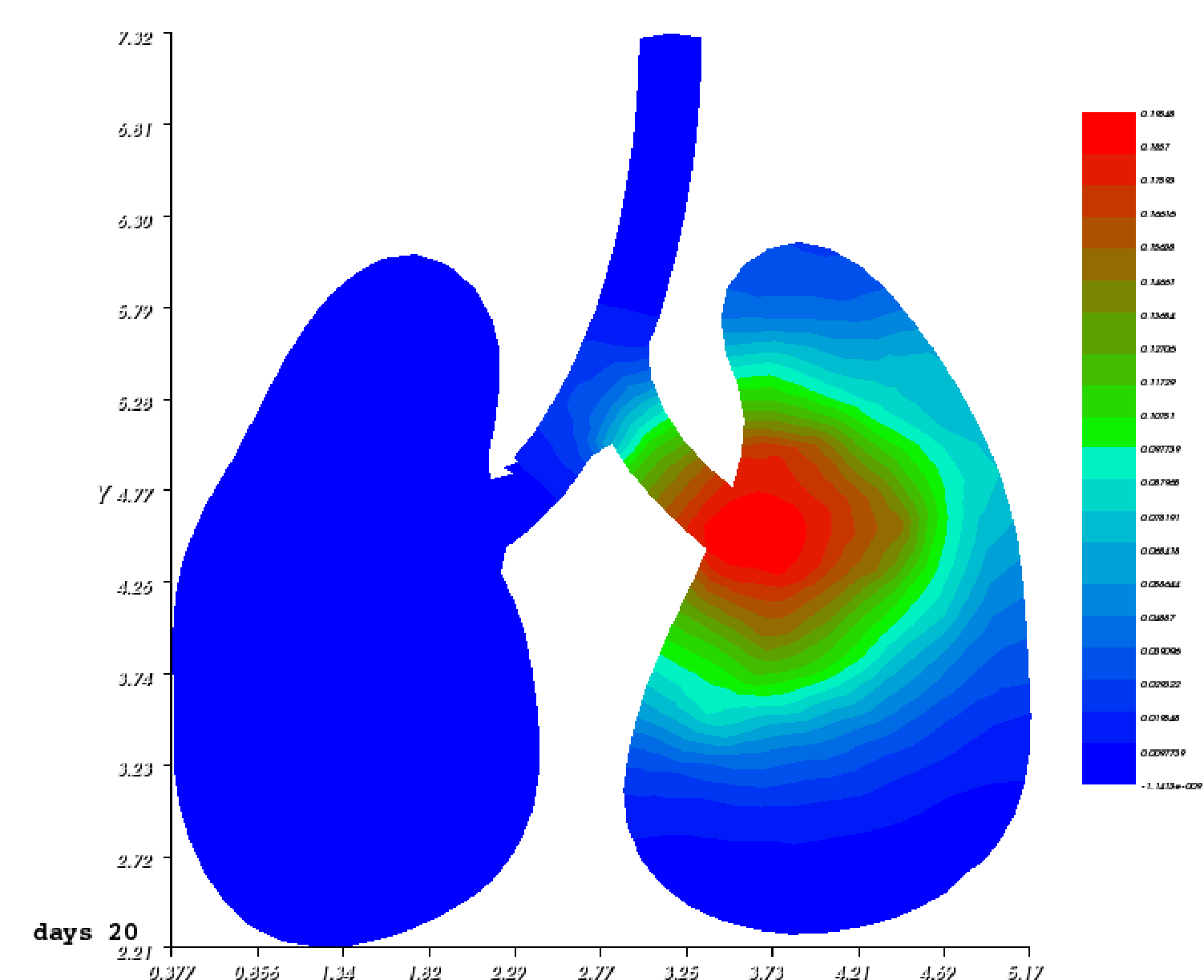}
         \caption{The tumour density evolution $\mathbf{u}$ without treatment, $i.e.,$ $\mathbf{u}  =\mathcal{F}(0)$ on 5th, 10th, 15th, 20th day, respectively, from left to right.}
      \label{2tumst}
   \end{figure}
     \begin{figure} 
\centering
        \includegraphics[width=0.29\textwidth]{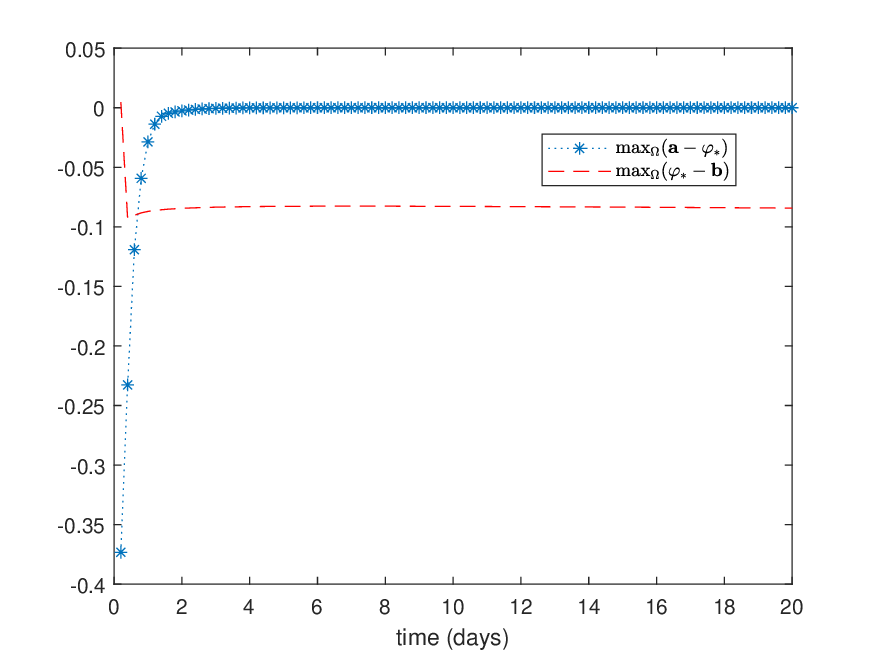}
        \includegraphics[width=0.29\textwidth]{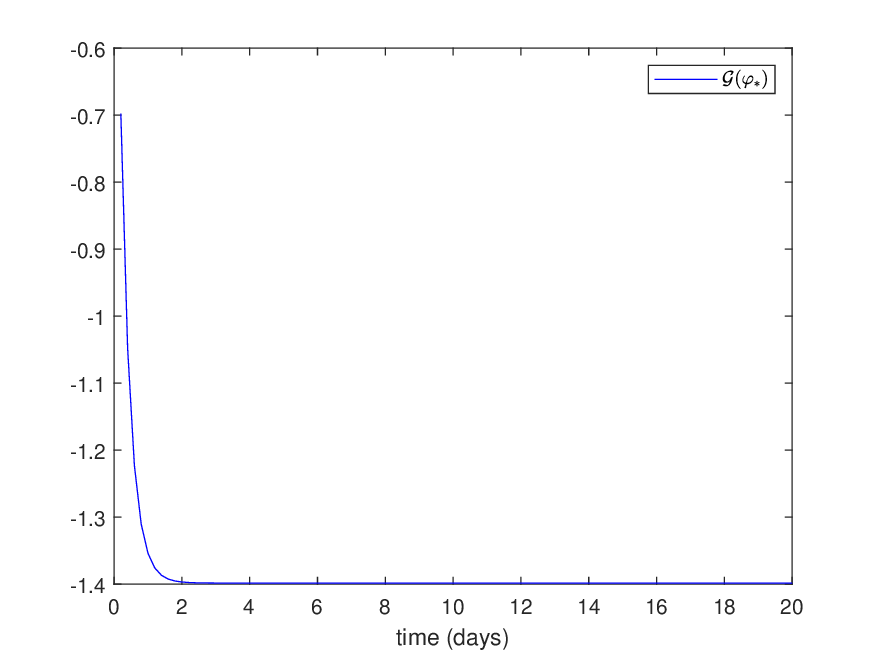}
         \caption{Evolution of the feasibility of the control $\varphi_{*}$ on the left and the state $\mathbf{u_{*}}$ on the right.}
         \label{figerac2}
   \end{figure}
    \begin{figure}
\centering
    \includegraphics[width=0.29\textwidth]{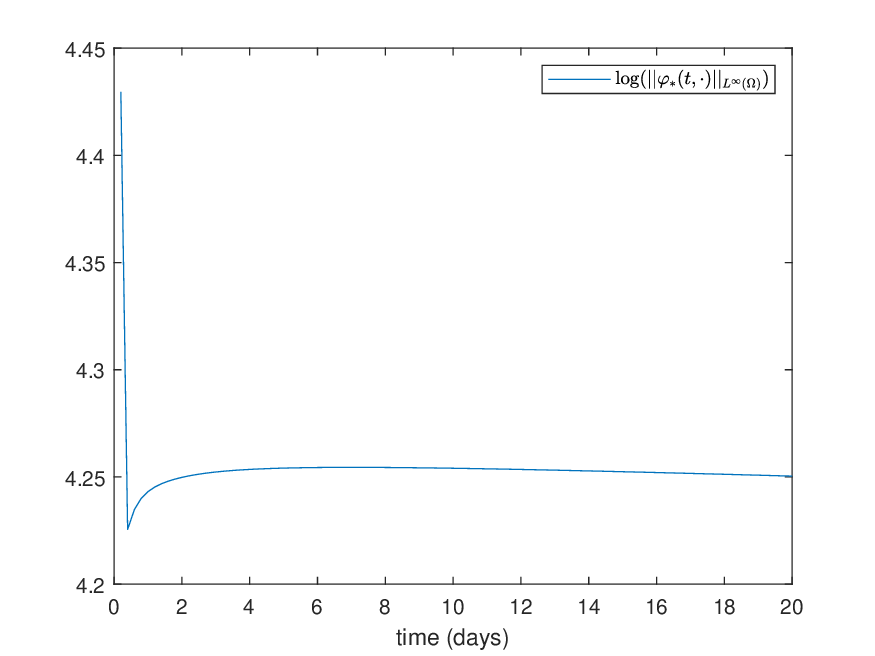}
        \includegraphics[width=0.29\textwidth]{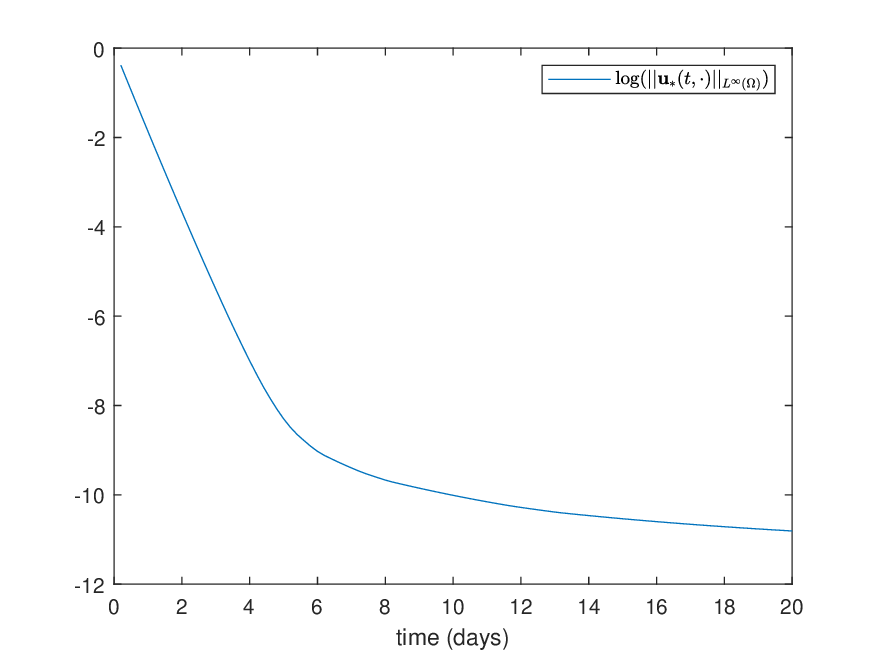}
           \caption{Evolution of $\log(\lVert\varphi_{*}\rVert_{\mL^{\infty}(\Omega)})$ on the left and $\log(\lVert\mathbf{u}_{*}\rVert_{\mL^{\infty}(\Omega)})$ on the right.}
 \label{2dec}
   \end{figure} 
       \begin{table}[htbp]
   \label{max tab2}
     \caption{The maximum and minimum value of the optimal tumour density.}
    \centering
   \begin{tabular}{|c|c|c|c|c|}
\hline
Time $(t)$ &  5{th} day&  10{th} day &  15{th} day & 20{th} day\\  
  \hline
 $\max_{\Omega}\mathbf{u_{*}}$& $0.00067$& $4.49\cdot 10^{-5}$& $2.65\cdot 10^{-5}$ & $2.09\cdot 10^{-5}$\\
  \hline  
  $\min_{\Omega}\mathbf{u_{*}}$& $1.17\cdot 10^{-7}$& $5.37\cdot 10^{-11}$& $6.83\cdot 10^{-12}$ & $1.037\cdot 10^{-12}$\\
  \hline
\end{tabular} 
\end{table}
\begin{figure}
\centering
        \includegraphics[width=0.22\textwidth]{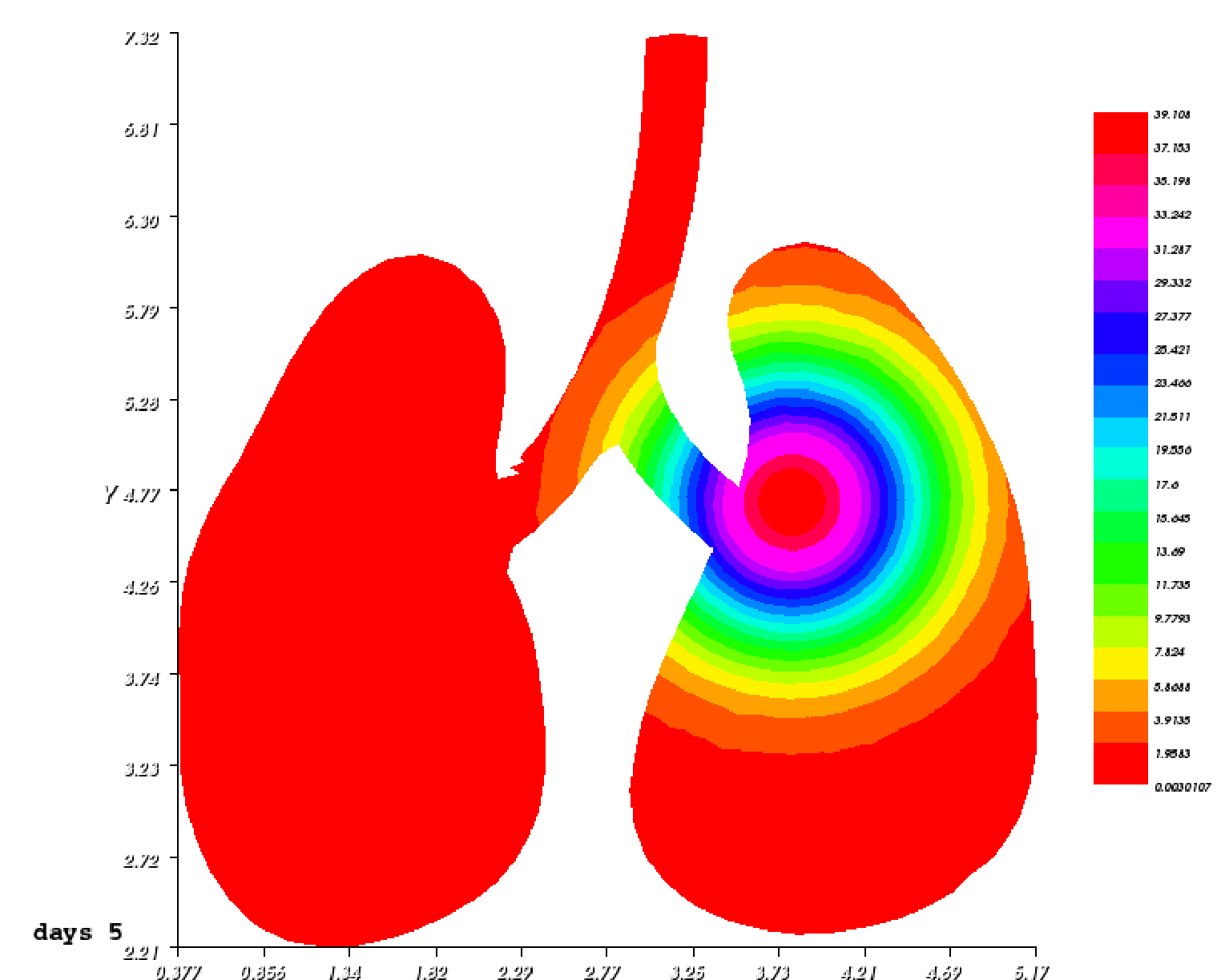}    
\includegraphics[width=0.22\linewidth]{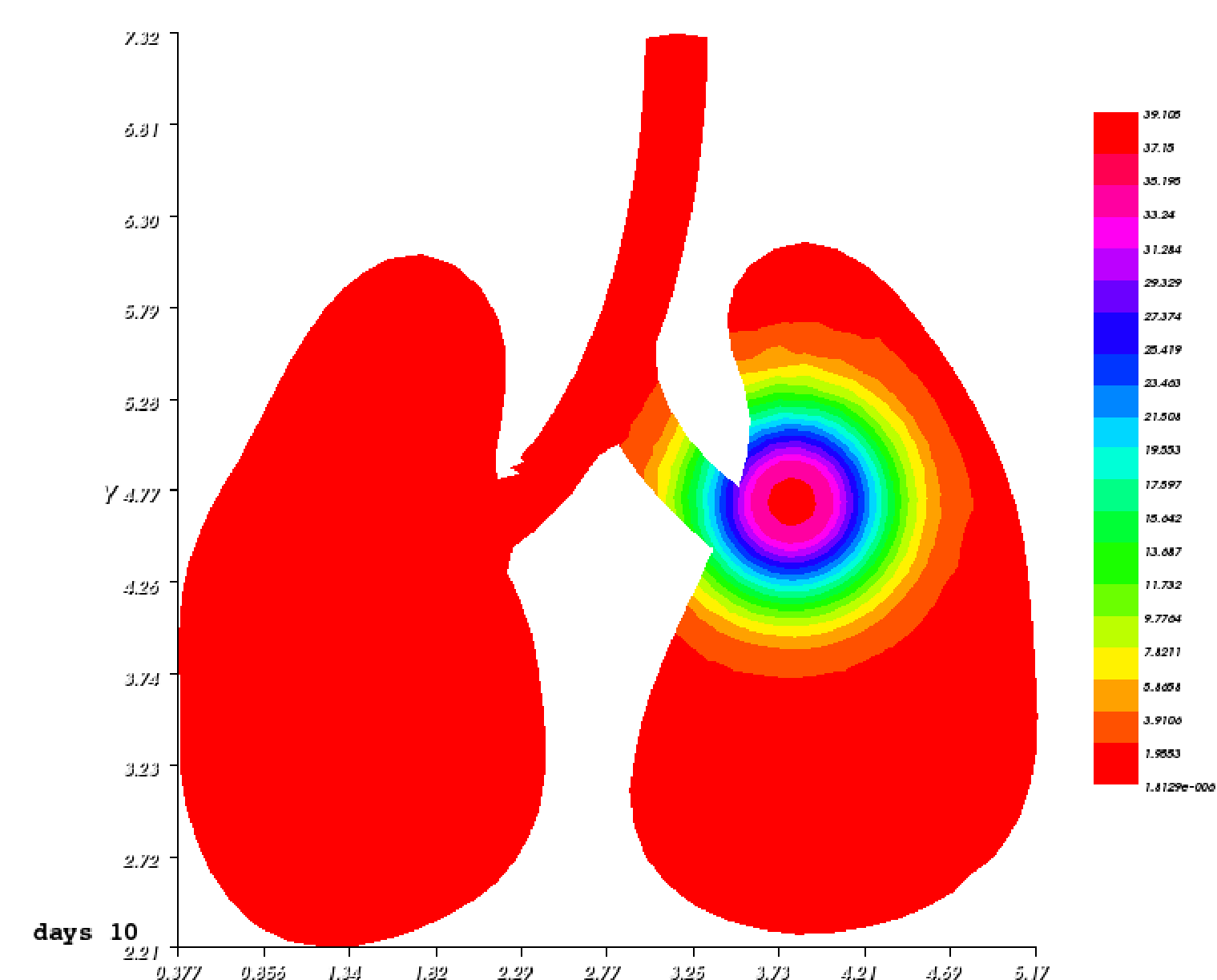}   
\includegraphics[width=0.22\textwidth]{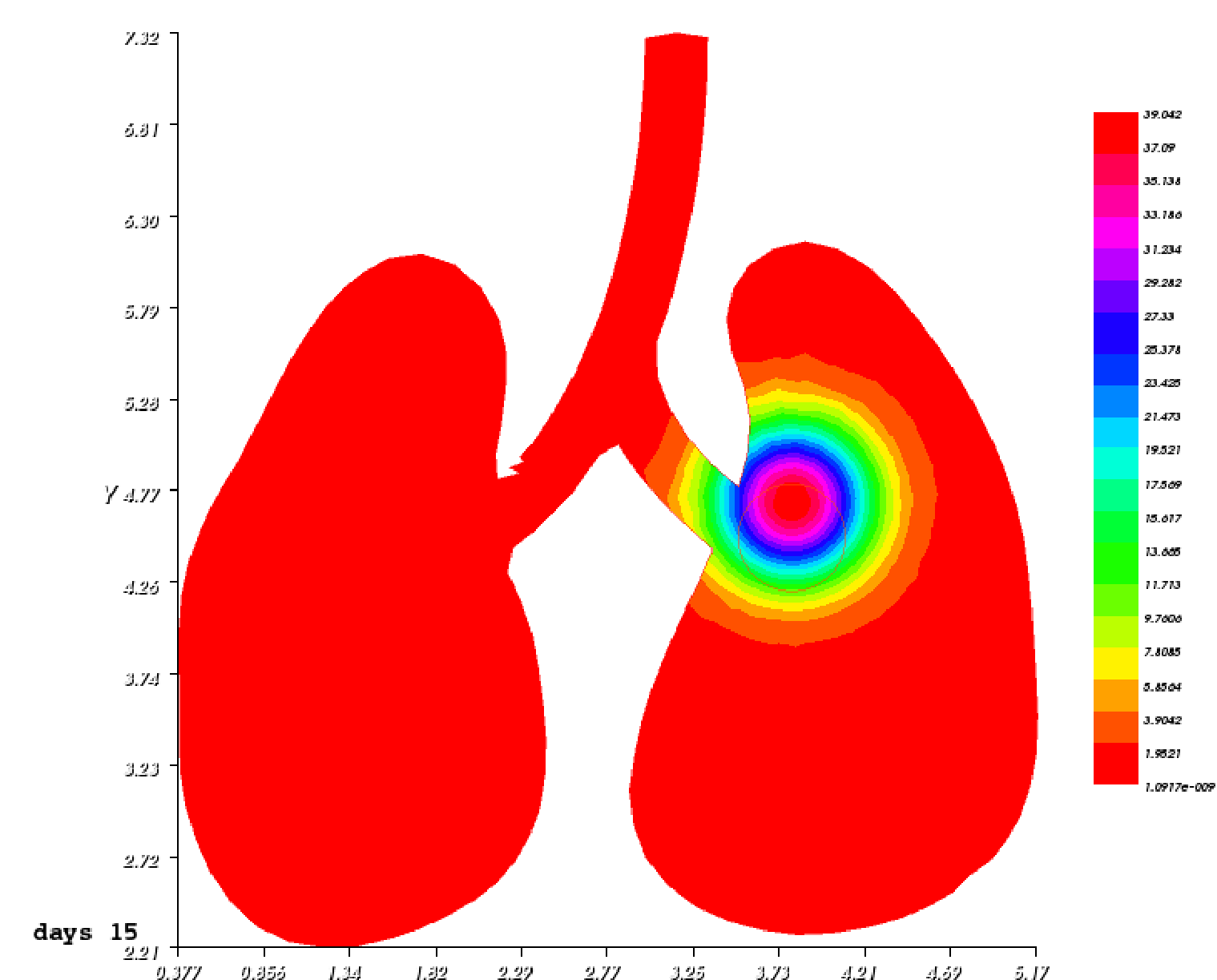}
\includegraphics[width=0.22\textwidth]{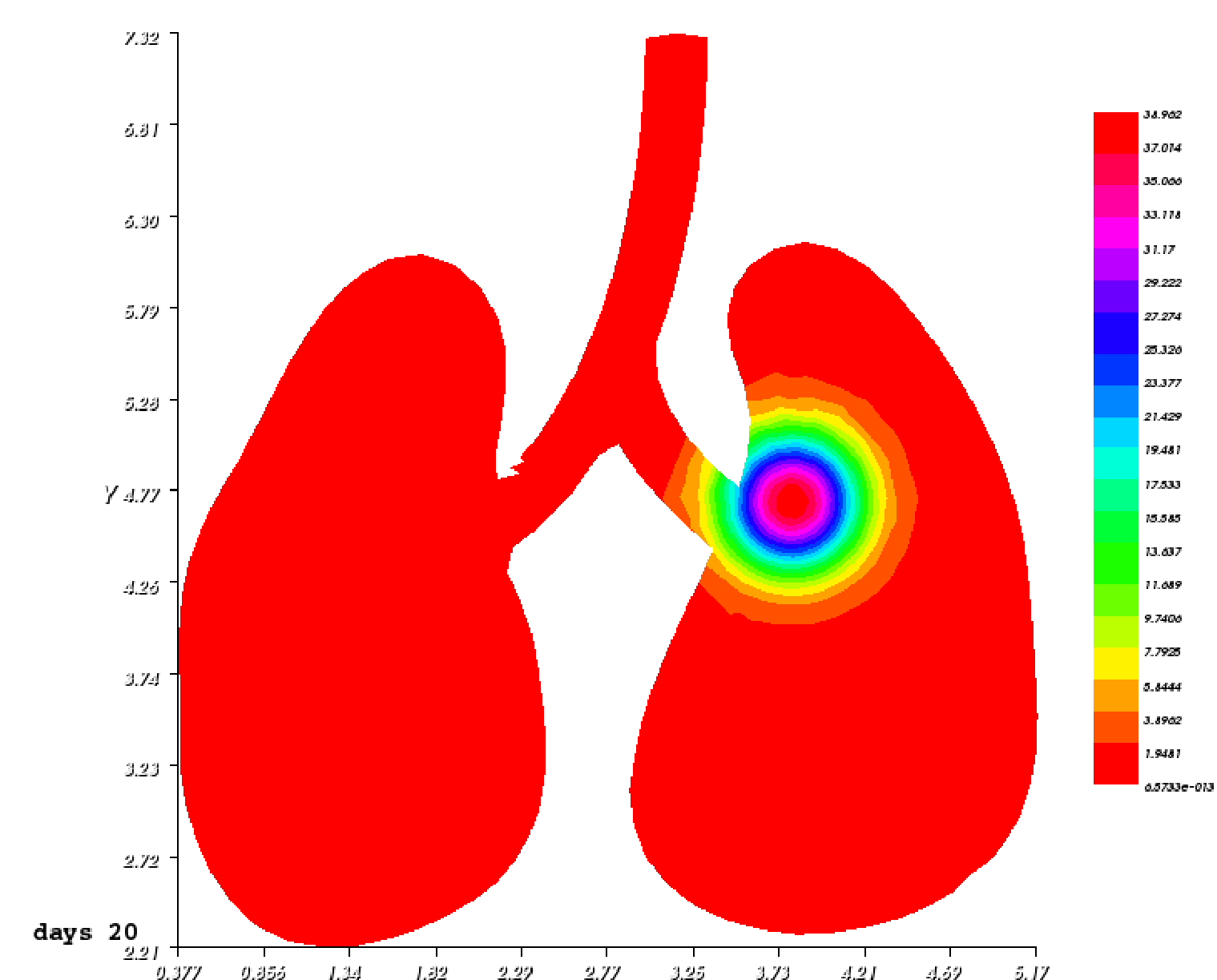} 
 \includegraphics[width=0.22\textwidth]{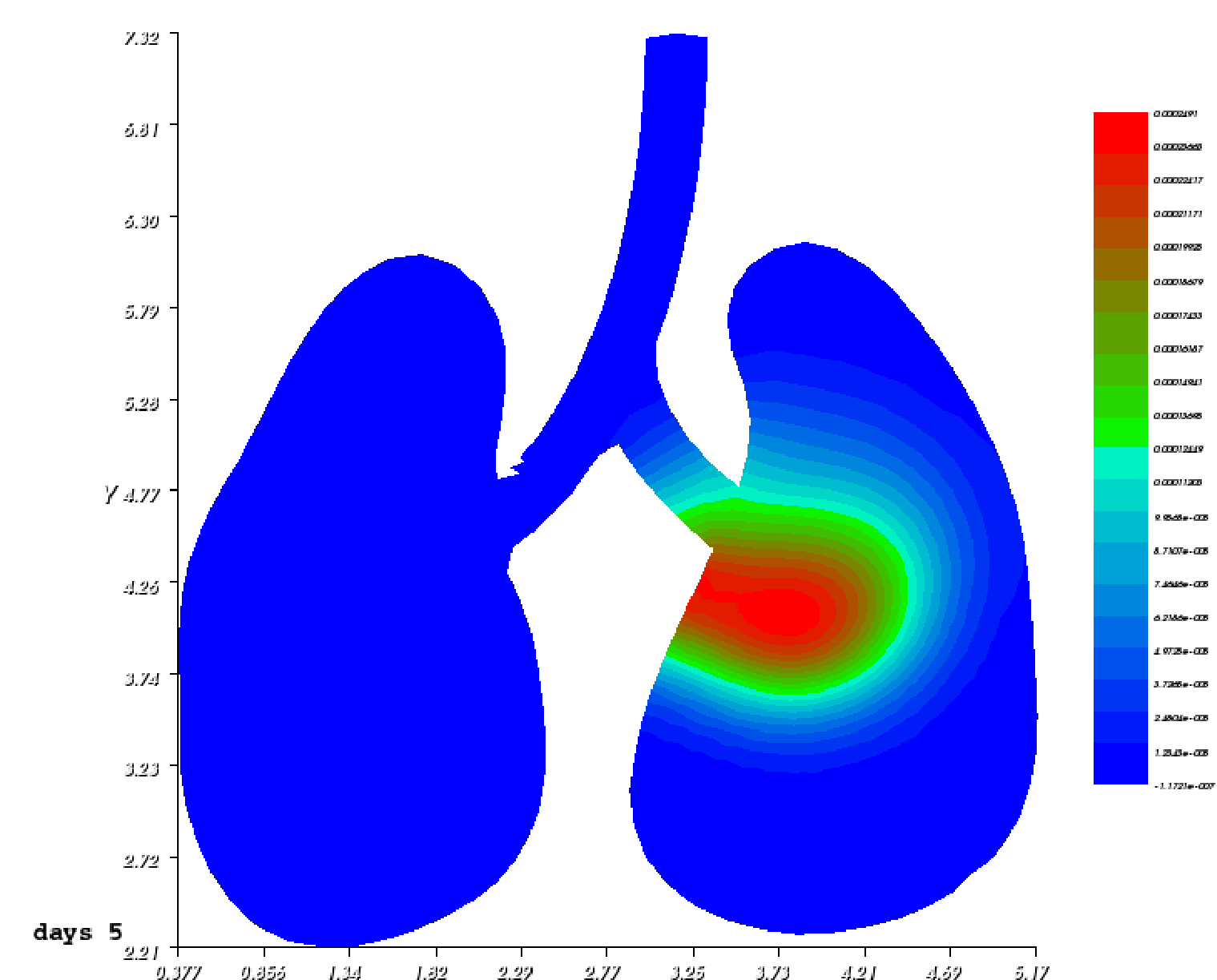}           
  \includegraphics[width=0.22\textwidth]{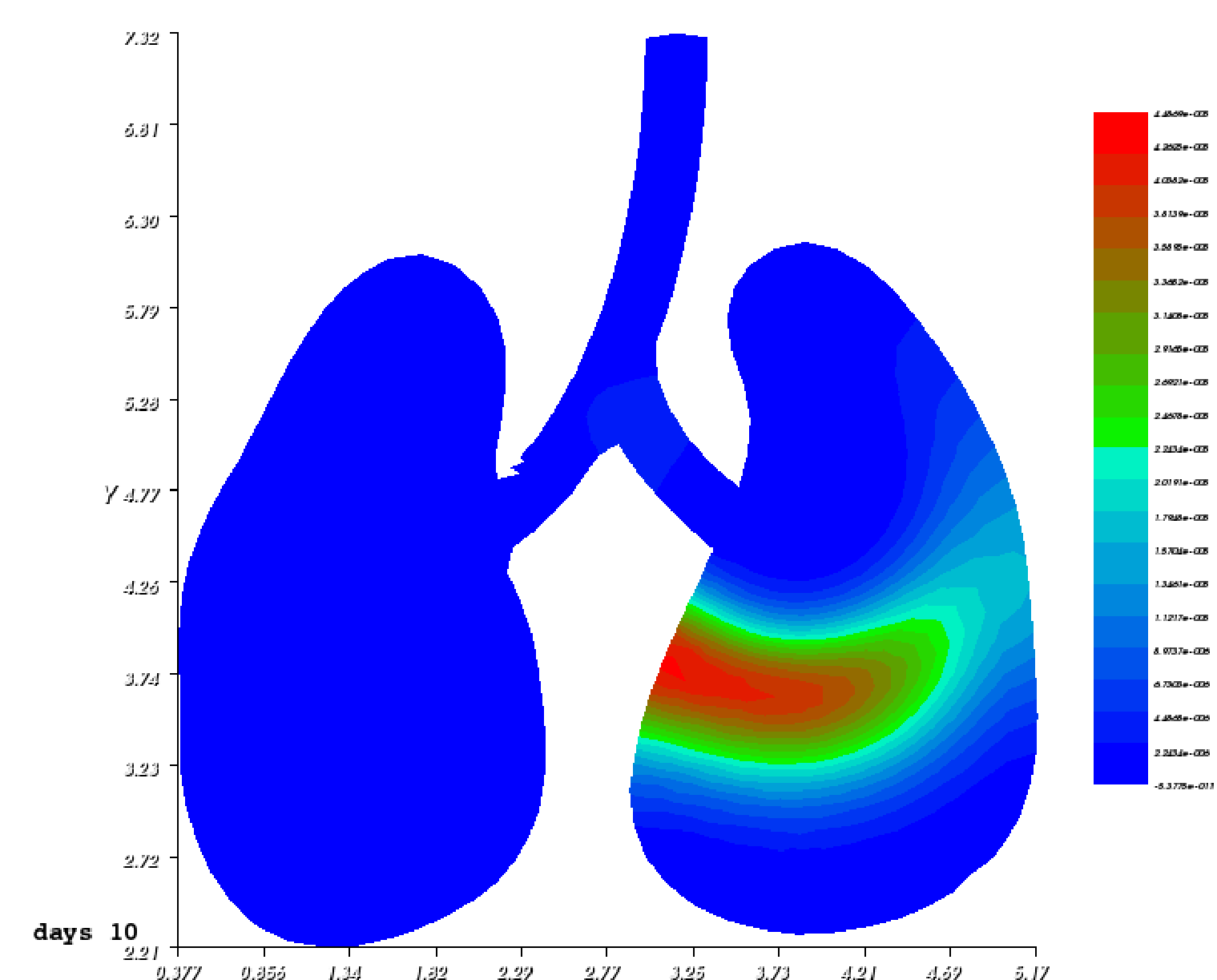}     
        \includegraphics[width=0.22\textwidth]{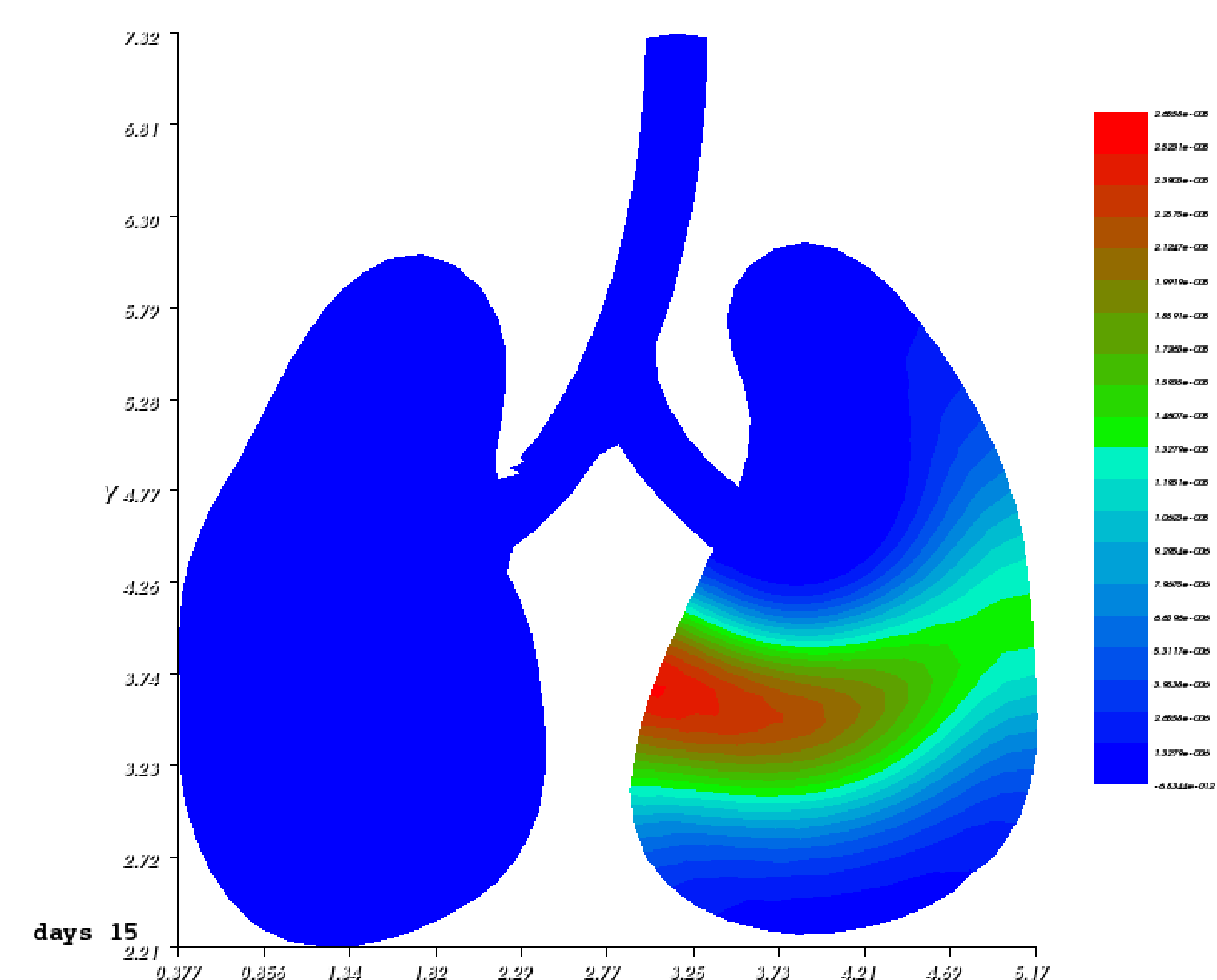}  
        \includegraphics[width=0.22\textwidth]{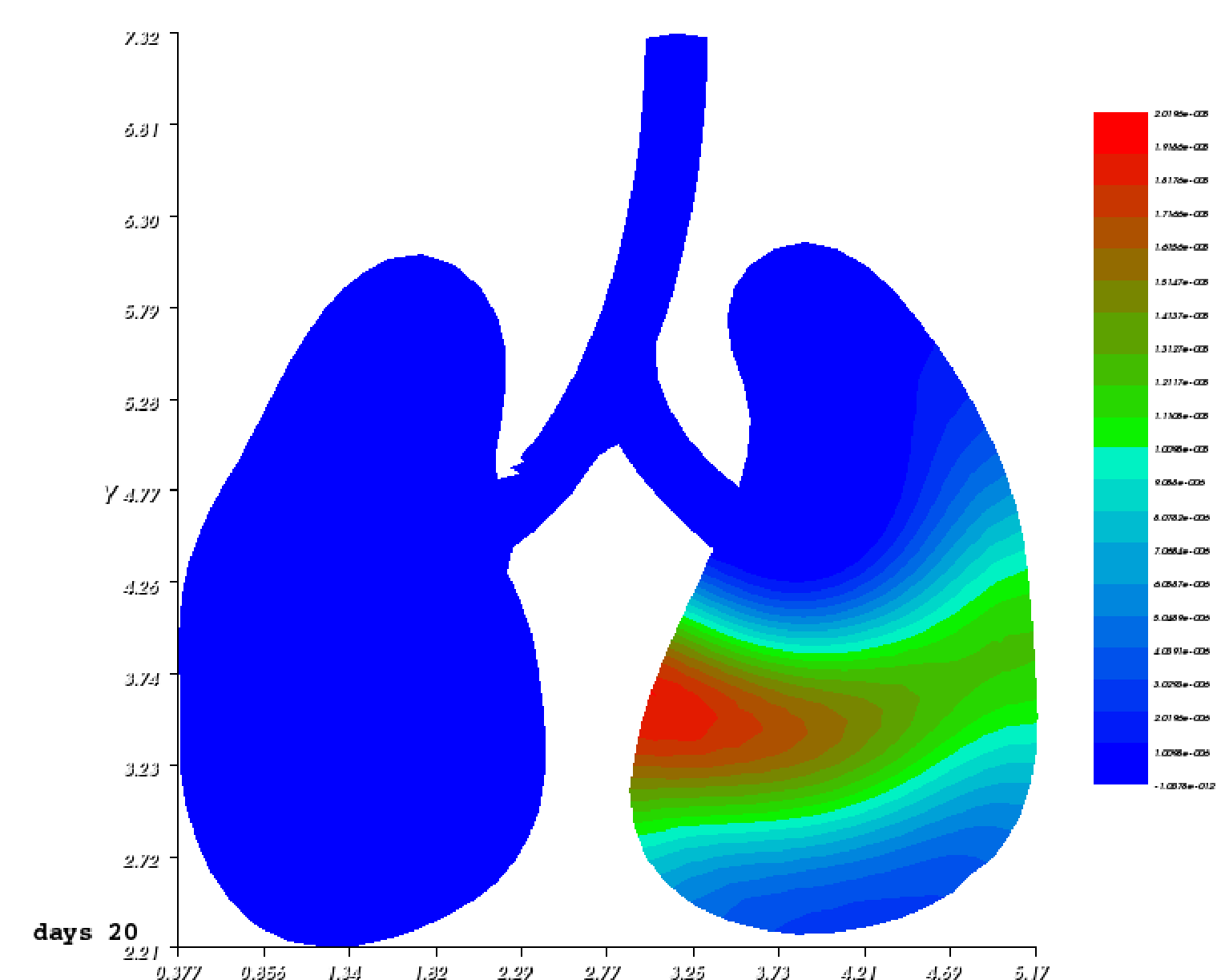} 
          \caption{
          The optimal drug $\varphi_{*}$ at the top and the optimal tumour density $ 
         \mathbf{u}_{*}$ at the bottom.
          }
     \label{2contAC}
   \end{figure} 
   \\
   Once again, for this case, all the figures are consistent and meet our expectations.
\section{Conclusion and Perspectives}\label{setting5}
This article presents a promising methodology and strategy for the optimal treatment of cancer with chemotherapy, through optimal control of drug concentration.
We adopted a nonlinear mathematical model of tumour response to treatment, coupled with an optimal control problem with state and control constraints.
The paper begins with a mathematical analysis of the model, followed by a detailed formulation of the optimal control problem and a discussion of the optimality conditions. 
Techniques for the numerical resolution of the control problem have been proposed (a penalty method is applied). Numerical simulations of optimal solutions to the control problem, aimed at eradicating two specific cases of lung cancer, confirm our expectations and highlight the importance of state constraints in personalized therapeutic processes.

In future work, we plan to adapt our approach to realistic protocols personalized with clinical data. From a clinical perspective, the chemotherapy plan is carefully structured according to a predetermined schedule specifying the treatment days. The duration and mode of administration depend on the drugs used and the specific therapeutic goals. Each treatment session is followed by a rest period, allowing the body to recover, leading to a discontinuous therapeutic approach. We will develop a therapeutic strategy that is both discontinuous in time and localized in space, targeting specific intervention time to treat various types of cancer using realistic and stochastic data.

\appendix
\section{The proofs of the Results of Section \ref{setting2}} \label{setting6}
\subsection{Proof of Theorem \ref{Théorème principal pour le bien posé} }\label{setting61}
The local well-posedness of the problem \eqref{Eqstate} is derived from a more general framework of \cite[Theo 2.2]{ACRB1999}. Before applying this result, we  prove that our problem \eqref{Eqstate} fits well into this framework by showing in the sense of Definition \ref{def epreg}, that $f_1$ is $\epsilon_1$-regular and $\tilde{f}_2$ is $\epsilon_2$-regular with respect to $(\mathcal{X}^{1}_q(\Omega),\mathcal{X}^{0}_q(\Omega))$ for some $\epsilon_1, \epsilon_2 \ge 0$, as stated in the following Lemma. 
\begin{lemma}
\label{epsgf1f2}
For a given function $\varphi$ in $\mathcal{U}_{ad}$, we have that the function $f_1$ is $\epsilon_1-regular,$ and $\tilde{f}_2$ is $\epsilon_2-regular$ in the sense of the definition \ref{def epreg}, where
\begin{enumerate}
\item[(i)] for $r=0:$ \\
$\epsilon_1=\frac{m}{2q}\Big(1-\frac{q}{\mathbf{p}_c(\rho_1-1)}\Big),$ $\gamma_1=\rho_1\epsilon_1 +1-\frac{m(\rho_1 -1)}{2q};$ $\epsilon_2=\frac{q+\rho_2 +1}{4q\rho_2},$ $\gamma_2=\rho_2\epsilon_2 +\frac{1}{2}-\frac{m(\rho_2 -1)}{2q}$,\\ with $ \min(\gamma_{1};\gamma_{2})=:\underline{\gamma}>\Bar{\epsilon}:=\max(\epsilon_{1};\epsilon_{2}).$
\item[(ii)] for $r=1$ and $q<m:$\\
$\epsilon_1=\frac{m-q}{2q}-\frac{m}{2\mathbf{p}_c(\rho_1 -1)},$ $\gamma_1=\rho_1\epsilon_1 +\frac{(m+q)-\rho_1(m-q)}{2q};$ $\epsilon_2=\frac{m+1}{2q\rho_2},$ $\gamma_2=\rho_2\epsilon_2 +\frac{m-\rho_2(m-q)}{2q}$ with $ \underline{\gamma}>\Bar{\epsilon}.$
\item[(iii)] for $r=1$ and $q\ge m:$\\
$\epsilon_1=0,$ $\gamma_1=\frac{1}{2};$ $\epsilon_2=\frac{m+1}{4q},$ $\gamma_2=\frac{m}{2q}$ with $ \underline{\gamma}>\Bar{\epsilon}.$
\end{enumerate}
\end{lemma}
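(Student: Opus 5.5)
We verify the two inequalities of Definition \ref{def epreg} separately for $f_1$ and $\tilde f_2$ in each of the three regimes (i)--(iii). The tool is the same throughout: bound the $\mathcal{X}^{\gamma}_q(\Omega)$ norm from above by an $\mL^{s}(\Omega)$ norm (or, for the boundary term, by an $\mL^{s}(\Gamma)$ norm through the dual of the trace). Then use Hölder's inequality. Finally bound the resulting Lebesgue norms of $u$ by $\lVert u\rVert_{\mathcal{X}^{1+\epsilon}_q(\Omega)}$ via \eqref{injection xlq} (for $r=0$) or \eqref{injection wlq} (for $r=1$). We take $\rho=\rho_1$ for $f_1$, $\rho=\rho_2$ for $\tilde f_2$, and $\nu\equiv 0$, since no explicit time singularity appears. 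The conditions $\gamma\in[\rho\epsilon,1[$ and $\underline{\gamma}>\bar\epsilon$ are then checked from the explicit formulas together with the ranges $(I_1)$ and $(I_2)$.

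\textbf{Step 1: the term $f_1$.} We split $f_1(u)-f_1(v)$ into three pieces.
\begin{itemize}
\item The linear part $\mK_2(u-v)$ is trivially controlled because $\mK_2\in\mL^\infty$.
\item For the polynomial part we use $|\text{h}_1(u)-\text{h}_1(v)|\le C|u-v|(|u|^{\rho_1-1}+|v|^{\rho_1-1})$.
\item For the control part we have $\alpha_0\varphi(u-v)$ with $\varphi(t)\in\mL^{\mathbf{p}_c}(\Omega)$ uniformly in $t$, since $0\le\mathbf a\le\varphi\le\mathbf b$.
\end{itemize}
We choose $s$ with $\mL^{s}(\Omega)\hookrightarrow\mathcal{X}^{\gamma_1}_q(\Omega)$, i.e. $s=\frac{mq}{m+2q-2q\gamma_1}$ for $r=0$. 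Hölder gives $\lVert |u-v||u|^{\rho_1-1}\rVert_{\mL^s}\le\lVert u-v\rVert_{\mL^{\ell}}\lVert u\rVert^{\rho_1-1}_{\mL^{\ell}}$ with $\ell=\rho_1 s$. We then require $\mathcal{X}^{1+\epsilon_1}_q\hookrightarrow\mL^{\rho_1 s}$, i.e. $\rho_1 s=\frac{mq}{m-2q\epsilon_1}$. Solving these two relations yields exactly $\gamma_1=\rho_1\epsilon_1+1-\frac{m(\rho_1-1)}{2q}$.

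The control term forces the choice of $\epsilon_1$. We need $\lVert\varphi(u-v)\rVert_{\mL^s}\le\lVert\varphi\rVert_{\mL^{\mathbf{p}_c}}\lVert u-v\rVert_{\mL^{\ell'}}$ with $\frac1s=\frac1{\mathbf{p}_c}+\frac1{\ell'}$ and $\ell'\le\frac{mq}{m-2q\epsilon_1}$. Imposing equality gives $\epsilon_1=\frac{m}{2q}\big(1-\frac{q}{\mathbf{p}_c(\rho_1-1)}\big)$. This factor multiplies only $\lVert u-v\rVert$, so it is absorbed into the constant $C$ (the first power of the norm is majorized by $1+\lVert u\rVert^{\rho_1-1}$ on bounded sets). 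If we want the exact form \eqref{epregu}, we instead absorb it through the $\nu(t)t^{\epsilon-\gamma}$ slot with $\nu$ a small constant times $t^{\gamma-\epsilon}$. Cases (ii) and (iii) are handled the same way, using \eqref{injection wlq}. In case (iii), $q\ge m$ gives $\mathcal{X}^{1}_q=\mW^{1,q}\hookrightarrow\mL^\infty$ or every $\mL^\ell$, so $\epsilon_1=0$ and $\gamma_1=\frac12$ work directly.

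\textbf{Step 2: the term $\tilde f_2$.} For the boundary term we interpret $\tilde f_2$ as the functional $\psi\mapsto\int_\Gamma\mK_3\text{h}_2(u)\gamma_\Gamma\psi$ on $\mathcal{X}^{-\gamma_2}_{q^*}=(\mathcal{X}^{\gamma_2}_q)'$. Its norm is bounded by $\lVert\text{h}_2(u)\rVert_{\mL^{\sigma}(\Gamma)}$ whenever the trace maps $\mathcal{X}^{-\gamma_2}_{q^*}$-dual regularity, namely $\mH^{2(1-\gamma_2)}_{q^*}$ for $r=0$, continuously into $\mL^{\sigma'}(\Gamma)$ by \eqref{opérateur trace}. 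We then apply Hölder on $\Gamma$ and use the trace of $\mathcal{X}^{1+\epsilon_2}_q\hookrightarrow\mH^{2+2\epsilon_2-2}_q$ into $\mL^{\rho_2\sigma}(\Gamma)$, again from \eqref{opérateur trace}. Matching these exponents produces the stated $\epsilon_2$ and $\gamma_2$. This step also needs the trace condition $rq>1$ on the relevant fractional index, and that index must be strictly greater than $1/q$.

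\textbf{Main obstacle.} The delicate point is the final bookkeeping: showing that the exponents obtained are admissible in every case. Concretely, we must check that $0\le\epsilon_i$, that $\rho_i\epsilon_i\le\gamma_i<1$, that the fractional indices lie in the validity ranges of \eqref{injection xlq}, \eqref{injection wlq} and \eqref{opérateur trace} (including the trace index exceeding $1/q$), and that $\min(\gamma_1,\gamma_2)>\max(\epsilon_1,\epsilon_2)$. We would do this case by case, using the upper bounds on $\rho_i$ from $(I_1)$ and the interval for $\mathbf{p}_c$ from $(I_2)$. For instance, the lower bound $\mathbf{p}_c>\frac{\rho_1}{\rho_1-1}$ (resp. $\ge\frac{q}{\rho_1-1}$) is precisely what makes $\epsilon_1\ge0$ and $\gamma_1>\epsilon_1$. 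We expect the boundary exponents in (i)--(ii) to require the most care, and we would verify them first.
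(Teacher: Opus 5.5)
Your route is the paper's own. You use the same Hölder/embedding matching: your relations $\ell=\rho_1 s$ and $\frac{1}{s}=\frac{1}{\mathbf{p}_c}+\frac{1}{\ell}$ are exactly the paper's $\ell_1=\rho_1 s_1$, $\mathbf{p}_c=\rho_1^{*}s_1$, and they do reproduce $\epsilon_1,\gamma_1$. You also use the same duality/trace bound for $\tilde f_2$ and the same device $\nu(t)=Ct^{\gamma_1-\epsilon_1}$ for the linear terms. Keep only that last option. With $\nu\equiv 0$, the linear terms $\mK_2u$ and $\alpha_0\varphi u$ cannot satisfy the Lipschitz bound near $u=v=0$, and ``absorbing into $C$ on bounded sets'' is not the form the definition requires. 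Also, matching the boundary exponents only fixes $\gamma_2$ as a function of $\epsilon_2$ (for $r=0$, $\gamma_2=\rho_2\epsilon_2+\frac{1}{2}-\frac{m(\rho_2-1)}{2q}$); $\epsilon_2$ itself is a free choice. The genuine gap is the step you postpone as bookkeeping. That is where the lemma's content lies, and for $\tilde f_2$ it fails with the exponents as stated. So your plan cannot be completed for the statement as written. The paper's proof does not close this either: it only asserts these ranges and treats (ii)--(iii) by ``proceeding similarly''.

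What is missing is the trace condition on the test-function side. Since $\tilde f_2(u)$ is the functional $\psi\mapsto\int_\Gamma \mK_3\mathrm{h}_2(u)\psi$, it belongs to $\mathcal{X}^{\gamma}_q$ only if the trace is bounded on $(\mathcal{X}^{\gamma}_q)'$.

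For $r=1$ this dual is $\mathcal{E}^{\frac{1}{2}-\gamma}_{q^*}(\Omega)\hookrightarrow\mH^{1-2\gamma}_{q^*}(\Omega)$, so you need $(1-2\gamma)q^*>1$, i.e. $\gamma<\frac{1}{2q}$.
In (ii), $\rho_2\le\frac{m}{m-q}$ gives $\gamma_2\ge\frac{m+1}{2q}>\frac{1}{2}$. Hence $\mathcal{X}^{\gamma_2}_q\subset\mathcal{X}^{1/2}_q=\mL^q(\Omega)$, which contains no nonzero distribution supported on $\Gamma$.
In (iii), $\gamma_2=\frac{m}{2q}\ge\frac{1}{2q}$. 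Besides, $\gamma_2\ge\rho_2\epsilon_2$ holds only for $\rho_2\le\frac{2m}{m+1}$, while $(I_1)$-(iii) allows every $\rho_2>1$. For $m=q=\rho_2=2$ (the setting of Section 3 and of the simulations) one gets $\gamma_2=\frac{1}{2}<\frac{3}{4}=\rho_2\epsilon_2$, and $\mathcal{X}^{1/2}_2=\mL^2(\Omega)$.

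For $r=0$ the dual is $\mathcal{E}^{1-\gamma}_{q^*}(\Omega)\hookrightarrow\mH^{2-2\gamma}_{q^*}(\Omega)$, which requires $\gamma<\frac{q+1}{2q}$. For the stated $\gamma_2$ this is equivalent to $q<(2m-1)(\rho_2-1)$, which $(I_1)$-(i) does not imply. For example, $m=q=2$, $\rho_2=\frac{3}{2}$ gives $\epsilon_2=\frac{3}{8}$ and $\gamma_2=\frac{13}{16}>\frac{3}{4}$, and the test space $\mathcal{E}^{3/16}_2(\Omega)=\mH^{3/8}(\Omega)$ admits no trace.

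To get a true lemma you have to change the boundary exponents. Take $\rho_2\epsilon_2\le\gamma_2$ with $\gamma_2$ strictly below these thresholds (for $r=1$ this means $\epsilon_2<\frac{1}{2q\rho_2}$). Then re-verify $\underline{\gamma}>\bar{\epsilon}$; in case (ii) this now forces $\epsilon_1<\frac{1}{2q}$.
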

\begin{pf}
Let $\varphi$ be in $\mathcal{U}_{ad}.$
\begin{enumerate}
\item[(i)]  Case $r=0.$ Let's
\begin{gather*}
 \epsilon_1=\frac{m}{2q}\Big(1-\frac{q}{\mathbf{p}_c(\rho_1-1)}\Big), \gamma_1=\rho_1\epsilon_1 +1-\frac{m(\rho_1 -1)}{2q}, s_1=\frac{mq}{m-2q(\gamma_1-1)}, \ell_1=\frac{mq}{m-2q\epsilon_{1}}.
 \end{gather*}
 According to assumptions $(I_1)-\rm{(i)}$  and $(I_2)-\rm{(i)}$ in \textit{\textbf{(H1)}}-\textit{\textbf{(H2)}}, we have
   $\epsilon_1\in]0;\frac{m}{2q}[,$ $\gamma_1\in[\rho_1\epsilon_1;1[,$ $\ell_1=s_1\rho_1$ and $\mathbf{p}_c=s_1\rho^{*}_1.$ 
    Now, let $u,v$ be in $\mathcal{X}^{1+\epsilon_1}_q(\Omega),$
for all $t>0$
$$\lVert f_{1}(t,\x,\varphi,u)\lVert_{\mathcal{X}^{\gamma_1}_q(\Omega)}\leq \lVert \alpha_{0}\varphi(t,\cdot)u(t,\cdot)\lVert_{\mathcal{X}^{\gamma_1}_q(\Omega)} +\lVert\mK_{2}(t,\cdot)u(t,\cdot)\lVert_{\mathcal{X}^{\gamma_1}_q(\Omega)}+\lVert\mK_{1}(t,\cdot)\text{h}_{1}(u(t,\cdot))\lVert_{\mathcal{X}^{\gamma_1}_q(\Omega)}.$$
From $\textit{\textbf{(H2)}},$ we have $-\frac{m}{2q^{*}}<\gamma_1 -1\leq 0,$ then using Hölder's inequality and  injections \eqref{injection xlq}:
\begin{gather}
\label{phiestim}
     \begin{split}
     \lVert\alpha_{0}\varphi(t,\cdot)u(t,\cdot)\lVert_{\mathcal{X}^{\gamma_1}_q(\Omega)}&\leq  C\bigg(\int_{\Omega}|\varphi(t,\x)|^{s_1}|u(t,\x)|^{s_1}\d x\bigg)^{\frac{1}{s_1}}\\
     &\leq C\bigg(\int_{\Omega}|\varphi(t,\x)|^{s_1 \rho^{*}_1}\d\x\bigg)^{\frac{1}{\rho^{*}_1 s_1}}\bigg(\int_{\Omega}|u(t,\x)|^{s_1\rho_1}\d \x\bigg)^{\frac{1}{\rho_1 s_1}}\\
     &\leq C\lVert \varphi\lVert_{\mL^{\infty}(0, T; \mL^{\mathbf{p}_c} (\Omega))} \lVert u(t,\cdot)\lVert_{\mL^{\ell_1} (\Omega)}\\
     &\leq C\lVert \mathbf{b}\lVert_{\mL^{\mathbf{p}_{c}}(\Omega)} \lVert u\lVert_{\mathcal{X}^{1+\epsilon_{1}}_q(\Omega)}\\
     &\leq C\lVert u\lVert_{\mathcal{X}^{1+\epsilon_{1}}_q(\Omega)}.
     \end{split}
 \end{gather}
 Similarly, by using \eqref{injection xlq}, Hölder's inequality, and the definition of $\text{h}_{1}$ in \eqref{hi}, we obtain
\begin{gather*}
  \lVert\mK_1(t,\cdot)\text{h}_{1}(u(t,\cdot))\lVert_{\mathcal{X}^{\gamma_1}_q(\Omega)}\leq C\lVert u\rVert^{\rho_1}_{\mathcal{X}^{1+\epsilon_{1}}_q(\Omega)},\\
  \lVert\mK_2(t,\cdot)u(t,\cdot)\lVert_{\mathcal{X}^{\gamma_1}_q(\Omega)}\leq C\lVert u\rVert_{\mathcal{X}^{1+\epsilon_{1}}_q(\Omega)}.
  \end{gather*}
  Then, the nonlinear operator \( f_{1}(.,.,\varphi,.): ]0; \infty[ \times \Omega \times \mathcal{X}^{1+\epsilon_{1}}_q(\Omega) \longrightarrow \mathcal{X}^{\gamma_{1}}_q(\Omega) \) is well-defined. Moreover, by applying Hölder's inequality, injections \eqref{injection xlq} and the following inequality,
\begin{gather}
\label{estim classic}
    ||a|^{\rho}-|b|^{\rho}|\leq C|a-b|(|a|^{\rho-1}+|b|^{\rho-1}+1),\mbox{ $\forall(a,b)\in\RR^{2}$ and $\rho> 1,$ }
    \end{gather}
    we obtain ($\forall u,v\in\mathcal{X}^{1+\epsilon_{1}}_q(\Omega)$ with $\nu_1(t)=Ct^{\gamma_1-\epsilon_{1}}$): 
    \begin{gather}
 \lVert f_1(t,\x,\varphi,u)\!-\!f_1(t,\x,\varphi,v)\lVert  _{\mathcal{X}^{\gamma_1}_q(\Omega)}\!\leq C\lVert  u\!-\!v\lVert_{\mathcal{X}^{1+\epsilon_{1}}_q(\Omega)}\big( \lVert  u\lVert^{\rho_{1}-1}_{\mathcal{X}^{1+\epsilon_{1}}_q(\Omega)}\!+\lVert  v\lVert^{\rho_{1}-1}_{\mathcal{X}^{1+\epsilon_{1}}_q(\Omega)}\!+\nu_1(t)t^{\epsilon_{1}-\gamma_1}\!\big).
    \end{gather}
It implies that $f_1$ is $\epsilon_{1}-reguar$ relative to $(\mathcal{X}^{1}_q(\Omega),\mathcal{X}^{0}_q(\Omega)).$\\
 Furthermore, putting $\epsilon_2=\frac{q+\rho_2 +1}{4q\rho_2},$ $\gamma_2=\rho_2\epsilon_2 +\frac{1}{2}-\frac{m(\rho_2 -1)}{2q}$, and using $\textit{\textbf{(H1)}}-\textit{\textbf{(H2)}},$  we obtain $\epsilon_2\in]\frac{1}{2q};\frac{m}{2q}[$ and $\gamma_2\in]\rho_2\epsilon_2;1[.$\\
 Let $u$ be in $\mathcal{X}^{1+\epsilon_2}_{q}(\Omega)$ and $\psi$ be in $\mathcal{X}^{-\gamma_2}_{q^{*}}(\Omega).$ Setting $\iota=\frac{(m-1)q}{\rho_{2}(m-2q\epsilon_{2})}$ and using \eqref{opérateur trace}, we can deduce that
\begin{gather*}
\begin{split}
 &\mathcal{X}^{1+\epsilon_2}_{q}(\Omega)\hookrightarrow \mH^{2\epsilon_2}_{q}(\Omega),\\
 &\gamma_{\Gamma}: \mH^{2\epsilon_2}_{q}(\Omega){\longrightarrow}\mL^{\omega\rho_2 }(\Gamma),\\
 & \mathcal{X}^{-\gamma_2}_{q^{*}}(\Omega)=\mathcal{E}^{1-\gamma_2}_{q^{*}}(\Omega)\hookrightarrow \mH^{2(1-\gamma_2)}_{q^{*}}(\Omega),\\
 & \gamma_{\Gamma}: \mH^{2(1-\gamma_2)}_{q^{*}}(\Omega){\longrightarrow}\mL^{\iota^{*}}(\Gamma).
  \end{split}
 \end{gather*}
Using Hölder's inequality, and the definition of $\text{h}_{2}$ in \eqref{hi}, we have (since $f_2$ is the restriction of $\tilde{f}_{2}$ on $\Gamma$)
\begin{gather*}
    \begin{split}
   \langle \tilde{f}_{2}(t,\x,u),\psi \rangle_{\mathcal{X}^{\gamma_2}_{q}(\Omega),\mathcal{X}^{-\gamma_2}_{q^{*}}(\Omega)}&=\int_{\Gamma}f_2(t,\x,u)\cdot\psi(\x)\d\x\\
   &=\int_{\Gamma}\mK_3(t,\x)\text{h}_{2}(u(t,\x))\cdot\psi(\x)\d\x\\
   &\leq \int_{\Gamma}|\mK_3(t,\x)\lVert  u|^{\rho_2}\cdot|\psi(\x)|\d\x\\
   &\leq C\int_{\Gamma}|u|^{\rho_2}\cdot|\psi(\x)|\d\x\\
   &\leq C\lVert u\lVert^{\rho_2}_{\mL^{\iota\rho_2}(\Gamma)}\lVert  \psi\lVert_{\mL^{\iota^{*}}(\Gamma)}\\
   &\leq C\lVert u\lVert^{\rho_2}_{\mathcal{X}^{1+\epsilon_2}_{q}(\Omega)}\lVert  \psi\lVert_{\mathcal{X}^{-\gamma_2}_{q^{*}}(\Omega)}, \mbox{ $\forall\psi\in\mathcal{X}^{-\gamma_2}_{q^{*}}(\Omega).$ }
    \end{split}
\end{gather*} 
Then, we get
\begin{gather*}
\lVert\tilde{f}_{2}(t,\x,u)\lVert_{\mathcal{X}^{\gamma_2}_{q}(\Omega)}\leq C\lVert u\lVert^{\rho_2}_{\mathcal{X}^{1+\epsilon_2}_{q}(\Omega)}.
\end{gather*}
Using \eqref{estim classic}, we can deduce that $\tilde{f}_{2}$ is $\epsilon_{2}-regular$ relative to $(\mathcal{X}^{1}_q(\Omega),\mathcal{X}^{0}_{q}(\Omega)).$ 
Moreover, from $\textit{\textbf{(H1)}}-\textit{\textbf{(H2)}},$  we have $\epsilon_1<\epsilon_2$ and $\gamma_2<\gamma_1.$ Given that \(\epsilon_2 < \rho_2 \epsilon_2 \leq \gamma_2\), we can conclude that \(\underline{\gamma} = \gamma_2 > \epsilon_2 = \bar{\epsilon}\). 
\item[(ii)]  Case $r=1$ and $q<m.$ By setting
\begin{gather*}
 \epsilon_1=\frac{m-q}{2q}-\frac{m}{2\mathbf{p}_c(\rho_1-1)}, \gamma_1=\rho_1\epsilon_1 +\frac{(m+q)-\rho_1(m-q)}{2q}, s_1=\frac{mq}{m+q-2q\gamma_1},\\
  \ell_1=\frac{mq}{m-q-2q\epsilon_{1}},\epsilon_2=\frac{m+1}{2q\rho_2}, \gamma_2=\rho_2\epsilon_2 +\frac{m-\rho_2(m-q)}{2q},
 \end{gather*}
  and from $\textit{\textbf{(H1)}}-\textit{\textbf{(H2)}},$ we have $\epsilon_1\in[0;\frac{m-q}{2q}[,$ $\gamma_1\in[\rho_1\epsilon_1;1[,$ $\ell_1=s_1\rho_1,$ $\mathbf{p}_c=s_1\rho^{*}_1,$ $\epsilon_2\in]\frac{1}{2q};\frac{m}{2q}]$ and $\gamma_2\in[\rho_2\epsilon_2;1[.$ Using injections \eqref{injection wlq}, Hölder's inequality and the relation  \eqref{estim classic}, we can proceed similarly to the previous case and we can conclude that $f_1$ is $\epsilon_1-regular$, and $\tilde{f}_{2}$ is $\epsilon_{2}-regular$ relative to $(\mathcal{X}^{1}_q(\Omega),\mathcal{X}^{0}_q(\Omega))$, and that $\underline{\gamma}=\gamma_2>\epsilon_2=\Bar{\epsilon}.$
 \item[(iii)]  Case $r=1$ and $q\ge m.$ By setting $\epsilon_1=0,$ $\gamma_1=\frac{1}{2},$ $s_1=q,$ $\ell_1=\rho_1 q,$ $\epsilon_2=\frac{m+1}{4q},$ $\gamma_2=\frac{m}{2q}$, and proceeding similarly to the first case, we can deduce that 
$f_1$ is $\epsilon_1-regular$ and $\tilde{f}_{2}$ is $\epsilon_{2}-regular$ relative to $(\mathcal{X}^{1}_q(\Omega),\mathcal{X}^{0}_q(\Omega))$ and that  $\underline{\gamma}=\frac{m}{2q}>\frac{m+1}{4q}=\Bar{\epsilon}$. 
\end{enumerate}
This completes the proof.
\end{pf}
Now, we can give the proof of Theorem \ref{Théorème principal pour le bien posé}.

Let $\varphi$ be in $\mathcal{U}_{ad}$ and $u_0$ be in $\mW^{r,q}(\Omega)$ $(r\in\{0,1\}).$ According to Lemma \ref{epsgf1f2}, the function $f_1$ is $\epsilon_1 -regular$ and  $\tilde{f}_2$ is $\epsilon_2 -regular$ relative to $(\mathcal{X}^{1}_q(\Omega),\mathcal{X}^{0}_q(\Omega))$ for given $\epsilon_1,\epsilon_2$ such that $\underline{\gamma}>\Bar{\epsilon}.$ By applying \cite[Theo 2.2]{ACRB1999},  there exists a time $\tau_0>0$ and a unique $\Bar{\epsilon}-regular$ mild solution $\mathbf{u}$ on $[0;\tau_0]$ to the problem \eqref{Eqstate} and that $\mathbf{u}$ is the classical solution of \eqref{Eqstate} on $[0;\tau_0],$ which concludes the local existence solution part of the Theorem \ref{Théorème principal pour le bien posé}.
The global existence  of $\mathbf{u}$ at any finite horizon $T>0,$ can be obtained using the same technique as that used in \cite[Sec. 8]{RodB2002}. We give the general idea while omitting the details of the proof.

First, we fix a finite horizon $T$ and for any time $t\in (0; T),$ we establish an a priori estimate on $\mathbf{u}$ under a condition depending only on the data of \eqref{Eqstate}. In our case, when $\rho_1>2\rho_2$, we establish an a priori estimate without any condition. If $\rho_1 +1=2\rho_2$, a priori estimate of the solution can be established when $ 4\nu_0(q-1)\inf_{\mathcal{Q}}{\mK_{1}}\ge c_{\Omega}(\inf_{\Sigma}{\mK_{3}})^{2}(\rho_2 +q-1)^{2},$ $c_{\Omega}$ is a constant dependent only on $\Omega.$ Then, from the a priori estimates established, we easily show that in both cases that $\lVert\mathbf{u}(t)\rVert_{\mL^{q}(\Omega)}$ is bounded for a finite time and $\nabla(|\mathbf{u}|^{\frac{q}{2}})\in\mL^{2}((0; T)\times\Omega)$ for any $T>0,$ which concludes the proof.
\begin{rmk}
We do not consider the case  $\rho_1 +1<2\rho_2$, as this case may lead to blow-up phenomena  (cf. \cite{RodB2002}).
\end{rmk}
\subsection{Proof of Theorem \ref{Théorème principal pour le bien posé 2}}\label{preuve bien pose2}
We prove the uniqueness of the solution and stability of the problem \eqref{Eqstate}. For this, let $\mathbf{u}_1,$ $resp.$ $\mathbf{u}_2$ the classical solution of \eqref{Eqstate} with respect to $(u_{0,1},\varphi_{1}),$ $resp.$ $(u_{0,2},\varphi_{2})\in \mW^{r,q}(\Omega)\times\mathcal{U}_{ad}$ ($r\in\{0,1\}$).
  Then, according to \eqref{mild form}, we have ($\forall t\in (0; T)$): 
\begin{gather*}
\mathbf{u}_1(t,\x)-\mathbf{u}_2(t,\x)=\e^{\tilde{A}t}(u_{0,1}-u_{0,2})+\int^{t}_{0}\e^{\tilde{A}(t-s)}\Big(f_{1}(s,\x,\varphi_1(s),\mathbf{u}_1 (s))-f_{1}(s,\x,\varphi_2(s),\mathbf{u}_2 (s))\\
+\tilde{f}_{2}(s,\x,\mathbf{u}_1)-\tilde{f}_{2}(s,\x,\mathbf{u}_2)\Big)\d s.
\end{gather*}
\begin{enumerate}
\item Let \(\theta \in [0, \underline{\gamma}[\), using Hölder's inequality, we obtain
\begin{gather}
\label{dif0}
\begin{split}
t^{\theta}\lVert \mathbf{u}_1(t,\cdot)&-\mathbf{u}_2(t,\cdot)\lVert_{\mathcal{X}^{1+\theta}_q(\Omega)}\leq t^{\theta}\lVert \e^{\tilde{A}t}(u_{0,1}-u_{0,2})\lVert_{\mathcal{X}^{1+\theta}_q(\Omega)}\\
& +Ct^{\theta}\int^{t}_0 \lVert \e^{\tilde{A}(t-s)}\Big(f_{1}(s,\x,\varphi_1(s),\mathbf{u}_1 (s))-f_{1}(s,\x,\varphi_2(s),\mathbf{u}_2 (s))\Big)\lVert_{\mathcal{X}^{1+\theta}_q(\Omega)}\d s\\
&+Ct^{\theta}\int^{t}_0 \lVert \e^{\tilde{A}(t-s)}\Big(\tilde{f}_{2}(s,\x,\mathbf{u}_1 (s))-\tilde{f}_{2}(s,\x,\mathbf{u}_2 (s))\Big)\lVert_{\mathcal{X}^{1+\theta}_q(\Omega)}\d s.
\end{split}
\end{gather}
Moreover, according to \eqref{termes sources principales}: 
\begin{gather*}
\begin{split}
f_{1}(s,\x,&\varphi_1(s),\mathbf{u}_1(s))-f_{1}(s,\x,\varphi_2(s),\mathbf{u}_2 (s))=-\mK_1(s,\x)\big(\text{h}_{1}(u_{1}(s))-\text{h}_{1}(u_{2}(s))\big)\\
&+\mK_{2}(s,\x)\big(\mathbf{u}_1(s)-\mathbf{u}_2(s)\big)
-\alpha_{0}\varphi_{1}(s,\x)\big(\mathbf{u}_1(s) -\mathbf{u}_2(s)\big)
-\alpha_{0}\big(\varphi_1(s,\x) -\varphi_2(s,\x)\big)\mathbf{u}_2(s).
\end{split}
\end{gather*}
Then, using Definition \ref{def epreg}, and \eqref{phiestim}, we have
\begin{gather}
\label{dif1}
\begin{split}
&\lVert f_1(s,\varphi_1(s),\mathbf{u}_1 (s))-f_1(s,\varphi_2,\mathbf{u}_2 (s))\lVert_{\mathcal{X}^{\underline{\gamma}}_q(\Omega)}\\
&\leq\!\!C\lVert \mathbf{u}_1(s)\!\!-\!\!\mathbf{u}_2(s)\lVert_{\mathcal{X}^{1+\Bar{\epsilon}}_q(\Omega)}\Big(\!\lVert \mathbf{u}_1(s)\lVert^{\rho_1 -1}_{\mathcal{X}^{1+\Bar{\epsilon}}_q(\Omega)}\!\!+\!\lVert \mathbf{u}_2(s)\lVert^{\rho_1 -1}_{\mathcal{X}^{1+\Bar{\epsilon}}_q(\Omega)}
+ 2\!\Big)
+C\lVert \varphi_{1} -\varphi_{2} \Vert_{\mathcal{U}_{ad}}\lVert \mathbf{u}_2(s)\lVert_{\mathcal{X}^{1+\Bar{\epsilon}}_q(\Omega)}
\\
&\leq 2C\lVert \mathbf{u}_1(s)-\mathbf{u}_2(s)\lVert_{\mathcal{X}^{1+\Bar{\epsilon}}_q(\Omega)}(1+s^{-\Bar{\epsilon}(\rho_1 -1)})
+ Cs^{-\Bar{\epsilon}}\lVert \varphi_{1} -\varphi_{2} \Vert_{\mathcal{U}_{ad}}.
\end{split}
\end{gather} 
Similarly, we have
 \begin{gather}
 \label{dif2}
 \lVert \tilde{f}_{2}(s,u_1 (s))-\tilde{f}_{2}(s,u_2 (s))\lVert_{\mathcal{X}^{\underline{\gamma}}_q(\Omega)}
\leq C\lVert u_1(s)-u_2(s)\lVert_{\mathcal{X}^{1+\Bar{\epsilon}}_q(\Omega)}(1+2s^{-\Bar{\epsilon}(\rho_2 -1)}).
 \end{gather}
Substituting \eqref{dif1} and \eqref{dif2} in \eqref{dif0},  we get
 \begin{gather}
 \label{dif3}
\begin{split}
t^{\theta}\lVert& \mathbf{u}_1(t,\cdot)-\mathbf{u}_2(t,\cdot)\lVert_{\mathcal{X}^{1+\theta}_q(\Omega)}\leq t^{\theta}\lVert \e^{\tilde{A}t}(u_{0,1}-u_{0,2})\lVert_{\mathcal{X}^{1+\theta}_q(\Omega)}\\
&+
CMt^{\theta}\int^{t}_0 (t-s)^{\underline{\gamma}-(1+\theta)}\lVert f_{1}(s,\x,\varphi_1(s),\mathbf{u}_1(s))-f_{1}(s,\x,\varphi_2(s),\mathbf{u}_2(s))\lVert_{\mathcal{X}^{\underline{\gamma}}_q(\Omega)}\\
&+CMt^{\theta}\int^{t}_0 (t-s)^{\underline{\gamma}-(1+\theta)} \lVert \tilde{f}_{2}(s,\x,\mathbf{u}_1 (s))-\tilde{f}_{2}(s,\x,\mathbf{u}_2 (s))\lVert_{\mathcal{X}^{\underline{\gamma}}_q(\Omega)}
 \\
&\leq M \lVert u_{0,1}-u_{0,2}\lVert_{\mathcal{X}^{1}_q(\Omega)}+ CM\lVert \varphi_{1} -\varphi_{2} \Vert_{\mathcal{U}_{ad}}t^{\theta}\int^{t}_0 (t-s)^{\underline{\gamma}-(1+\theta)}s^{-\Bar{\epsilon}}\d s\\
&+CMt^{\theta}\int^{t}_0\Big((t-s)^{\underline{\gamma}-(1+\theta)}s^{-\Bar{\epsilon}}(2+s^{-\Bar{\epsilon}(\rho_{1} -1)}+2s^{-\Bar{\epsilon}(\rho_{2}-1)})\Big)s^{\Bar{\epsilon}}\lVert u_1(s)-u_2(s)\lVert_{\mathcal{X}^{1+\Bar{\epsilon}}_q(\Omega)}\d s.
 \end{split}
\end{gather}
Let $\theta\in[0;\underline{\gamma}[$, and consider the function
\begin{gather*}
\mathbf{\psi}_{\theta}(s):=(t-s)^{\underline{\gamma}-(1+\theta)}s^{-\Bar{\epsilon}}(2+s^{-\Bar{\epsilon}(\rho_{1} -1)}+2s^{-\Bar{\epsilon}(\rho_{2}-1)}).
\end{gather*}
Using the fact that $\underline{\gamma}>\Bar{\epsilon}$ and that $\forall t\in]0; T],$
\begin{gather*}
\begin{split}
 t^{\theta}\int^{t}_0 (t-s)^{\underline{\gamma}-(1+\theta)}s^{-\Bar{\epsilon}}\d s &=t^{\underline{\gamma}-\Bar{\epsilon}}\mathbf{B}(\underline{\gamma}-\theta;1-\Bar{\epsilon})\\
 &\leq T^{\underline{\gamma}-\Bar{\epsilon}}\mathbf{B}(\underline{\gamma}-\theta;1-\Bar{\epsilon})\\
 &\leq C,
 \end{split}
 \end{gather*}
we have
 \begin{gather}
 \label{dif4}
 \begin{split}
t^{\theta}\!\!\!\int^{t}_{0}\!\!\!\mathbf{\psi}_{\theta}(s)\d s &\!=\!2t^{\theta}\!\!\!\int^{t}_{0}\!\!\!(t-s)^{\underline{\gamma}-(1+\theta)}s^{-\Bar{\epsilon}}\d s \!+t^{\theta}\!\!\!\int^{t}_{0}\!\!\!(t-s)^{\underline{\gamma}-(1+\theta)}s^{-\Bar{\epsilon\rho_1}}\d s \!+2t^{\theta}\!\!\!\int^{t}_{0}\!\!\!(t-s)^{\underline{\gamma}-(1+\theta)}s^{-\Bar{\epsilon\rho_2}}\d s\\
&\leq 2T^{\underline{\gamma}-\Bar{\epsilon}}\mathbf{B}(\underline{\gamma}-\theta;1-\Bar{\epsilon})+T^{\underline{\gamma}-\Bar{\epsilon}}\mathbf{B}(\underline{\gamma}-\theta; 1-\rho_1 \Bar{\epsilon}) +2T^{\underline{\gamma}-\Bar{\epsilon}}\mathbf{B}(\underline{\gamma}-\theta,1-\rho_2\Bar{\epsilon})\\
&\leq C.
\end{split}
\end{gather}
The estimate \eqref{dif3} can be rewritten as follows:
\begin{gather}
\begin{split}
\label{stab}
  t^{\theta}\lVert \mathbf{u}_1(t,\cdot)-\mathbf{u}_2(t,\cdot)\lVert_{\mathcal{X}^{1+\theta}_q(\Omega)}\leq &M \lVert u_{0,1}-u_{0,2}\lVert_{\mathcal{X}^{1}_q(\Omega)}
  +CM\lVert \varphi_{1} -\varphi_{2} \Vert_{\mathcal{U}_{ad}}\\
  &+CMt^{\theta}\int^{t}_0  \mathbf{\psi}_{\theta}(s)s^{\Bar{\epsilon}}\lVert \mathbf{u}_1(s)-\mathbf{u}_2(s)\lVert_{\mathcal{X}^{1+\Bar{\epsilon}}_q(\Omega)}\d s.
\end{split}
\end{gather}
In particular, when \(\theta = \Bar{\epsilon} < \underline{\gamma}\), we can apply Grönwall's Lemma to obtain
\begin{gather}
\begin{split}
\label{stab2}
t^{\Bar{\epsilon}}\lVert \mathbf{u}_1(t,\cdot)-\mathbf{u}_2(t,\cdot)\lVert_{\mathcal{X}^{1+\Bar{\epsilon}}_q(\Omega)}
&\leq\Big( M \lVert u_{0,1}-u_{0,2}\lVert_{\mathcal{X}^{1}_q(\Omega)}+CM\lVert \varphi_{1} -\varphi_{2} \Vert_{\mathcal{U}_{ad}}\Big)\e^{CMt^{\Bar{\epsilon}}\int^{t}_{0} \mathbf{\psi}_{\Bar{\epsilon}}(s)\d s}\\
    &\leq C_{0}\Big(\lVert u_{0,1}-u_{0,2}\lVert_{\mathcal{X}^{1}_q(\Omega)}+\lVert \varphi_{1} -\varphi_{2} \Vert_{\mathcal{U}_{ad}}\Big).
\end{split}
\end{gather}
Therefore, substituting \eqref{stab2} in \eqref{stab} and using \eqref{dif4}, we obtain ($\forall\theta\in[0;\underline{\gamma}[$):
  \begin{gather}
   \label{stab3} 
  \begin{split}
  t^{\theta}\lVert \mathbf{u}_1(t,\cdot)-\mathbf{u}_2(t,\cdot)\lVert_{\mathcal{X}^{1+\theta}_q(\Omega)}\leq {c}\Big( \lVert u_{0,1}-u_{0,2}\lVert_{\mathcal{X}^{1}_q(\Omega)}+\lVert \varphi_{1} -\varphi_{2} \Vert_{\mathcal{U}_{ad}}\Big).
   \end{split}
  \end{gather}
\item For the maximal regularity $(\theta=\underline{\gamma}),$ we proceed as follows.
Let $\tilde{\theta}\in[0;\underline{\gamma}[,$ $\tau\in]0; t[$ and $t\in]0; T].$\\
 We have for ($i=1,2$, and $f:=f_1 +\tilde{f}_{2}$):
\begin{gather*}
    \mathbf{u}_{i}(t,\cdot)=\e^{\tilde{A}(t-\tau)}\mathbf{u}_{i}(\tau,\cdot)+\int^{t}_{\tau}\e^{\tilde{A}(t-s)}f(s,\x,\mathbf{u}_{i}(s))\d s \\
    \mathbf{u}_{i}(t+\varepsilon,\cdot)=e^{\tilde{A}(t+\varepsilon-\tau)}\mathbf{u}_{i}(\tau,\cdot)+\int^{t+\varepsilon}_{\tau}\e^{\tilde{A}(t+\varepsilon-s)}f(s,\x,\mathbf{u}_{i}(s))\d s,
\end{gather*}
Using Holder's inequality and \eqref{defM}, we obtain
\begin{gather*}
\begin{split}
\lVert \mathbf{u}_{1}(t+\varepsilon,\cdot)&-\mathbf{u}_{2}(t+\varepsilon,\cdot)\lVert_{\mathcal{X}^{1+\underline{\gamma}}_q(\Omega)}\leq M(t+\varepsilon-\tau)^{\tilde{\theta}-\underline{\gamma}}\lVert \mathbf{u}_{1}(\tau,\cdot)-\mathbf{u}_{2}(\tau,\cdot)\lVert_{\mathcal{X}^{1+\tilde{\theta}}_q(\Omega)}\\&+\int^{t+\varepsilon}_{\tau} \mathbf{\psi}_{\tilde{\theta}}(s)s^{\Bar{\epsilon}}\lVert \mathbf{u}_1(s)-\mathbf{u}_2(s)\lVert_{\mathcal{X}^{1+\Bar{\epsilon}}_q(\Omega)}
+CM\lVert \varphi_{1}-\varphi_{2} \Vert_{\mathcal{U}_{ad}}.
\end{split}
\end{gather*}
Now, when $\varepsilon\to 0,$ we have
\begin{gather*}
\begin{split}
\lVert \mathbf{u}_{1}(t,\cdot)-\mathbf{u}_{2}(t,\cdot)\lVert_{\mathcal{X}^{1+\underline{\gamma}}_q(\Omega)}&\leq M(t-\tau)^{\tilde{\theta}-\underline{\gamma}}\lVert \mathbf{u}_{1}(\tau,\cdot)-\mathbf{u}_{2}(\tau,\cdot)\lVert_{\mathcal{X}^{1+\tilde{\theta}}_q(\Omega)}\\
&+\int^{t}_{\tau} \mathbf{\psi}_{\tilde{\theta}}(s)s^{\Bar{\epsilon}}\lVert \mathbf{u}_1(s)-\mathbf{u}_2(s)\lVert_{\mathcal{X}^{1+\Bar{\epsilon}}_q(\Omega)}\d s
+CM\lVert \varphi_{1} -\varphi_{2} \Vert_{\mathcal{U}_{ad}}.
\end{split}
\end{gather*}
So, choosing $t-\tau=\tau=\frac{t}{2}$ for all $t\in ]0; T]$, and using \eqref{dif4} and \eqref{stab3}, we obtain
 \begin{gather}
 \label{ane1}
  \begin{split}
  t^{\underline{\gamma}}\lVert \mathbf{u}_1(t,\cdot)-\mathbf{u}_2(t,\cdot)\lVert_{\mathcal{X}^{1+\underline{\gamma}}_q(\Omega)}\leq c\Big(\lVert u_{0,1}-u_{0,2}\lVert_{\mathcal{X}^{1}_q(\Omega)}+\lVert \varphi_{1} -\varphi_{2} \Vert_{\mathcal{U}_{ad}}\Big).
   \end{split}
  \end{gather} 
Then, from \eqref{stab3} and \eqref{ane1}, we conclude that for all $\theta\in[0;\underline{\gamma}]$ and $t\in]0; T],$
  \begin{gather}
  \label{sas}
  \begin{split}
  \lVert \mathbf{u}_1(t,\cdot)-\mathbf{u}_2(t,\cdot)\lVert_{\mathcal{X}^{1+\theta}_q(\Omega)}\leq Ct^{-\theta}\Big( \lVert u_{0,1}-u_{0,2}\lVert_{\mathcal{X}^{1}_q(\Omega)}+\lVert \varphi_{1} -\varphi_{2} \Vert_{\mathcal{U}_{ad}}\Big),
   \end{split}
  \end{gather}
 which completes the proof of first estimate of \eqref{stabilité dt}.
\item We prove the second estimate of \eqref{stabilité dt} as follows. From \eqref{mild form}, we obtain (with $f=f_1 +\tilde{f}_2 $):
  \begin{gather}
  \label{gla}
  \begin{split}
  \partial_t\mathbf{u}_1(t,\cdot)-\partial_t \mathbf{u}_2(t,\cdot)=\tilde{A}(\mathbf{u}_{1}(t)-\mathbf{u}_{2}(t))+\Big(f(t,\x,\mathbf{u}_{1})-f(t,\x,\mathbf{u}_{2})\Big),
  \end{split}
\end{gather}
Let $\beta\in [0;\underline{\gamma}[$ and $t\in (0; T).$ Using the definition \ref{def epreg}, we have 
\begin{gather}
\label{dif8}
\begin{split}
    t^{\beta}\lVert f(s,\x,\mathbf{u}_1(s))\!-\!f(s,\x,\mathbf{u}_2(s))\lVert_{\mathcal{X}^{\beta}_q(\Omega)}&\leq  2Ct^{\beta-\Bar{\epsilon}}(1+t^{-\Bar{\epsilon}(\rho_1 -1)})t^{\Bar{\epsilon}}\lVert \mathbf{u}_1(t,\cdot)-\mathbf{u}_2(t,\cdot)\lVert_{\mathcal{X}^{1+\Bar{\epsilon}}_q(\Omega)}\\
&+ Ct^{\beta-\Bar{\epsilon}}\lVert \varphi_{1} -\varphi_{2} \Vert_{\mathcal{U}_{ad}}\\
&\leq C\Big(\lVert u_{0,1}-u_{0,2}\lVert_{\mathcal{X}^{1}_q(\Omega)}+\lVert \varphi_{1} -\varphi_{2} \Vert_{\mathcal{U}_{ad}}\Big).
\end{split}
\end{gather}
So, according to \eqref{sas}, \eqref{gla} and \eqref{dif8}:
 \begin{gather*}
 \label{dif5}
 \begin{split}
  t^{\beta}\lVert\partial_t \mathbf{u}_1(t,\cdot)-\partial_t \mathbf{u}_2(t,\cdot)\lVert_{\mathcal{X}^{\beta}_q(\Omega)}
  &\leq t^{\beta}\lVert\mathbf{u}_{1}(t)-\mathbf{u}_{2}(t)\lVert_{\mathcal{X}^{\beta+1}_q(\Omega)}+t^{\beta}\lVert f(s,\x,\mathbf{u}_1(s))-f(s,\x,\mathbf{u}_2(s))\lVert_{\mathcal{X}^{\beta}_q(\Omega)}\\
 &\leq C\Big( \lVert u_{0,1}-u_{0,2}\lVert_{\mathcal{X}^{1}_q(\Omega)}
 +\lVert \varphi_{1} -\varphi_{2} \Vert_{\mathcal{U}_{ad}}\Big).
  \end{split}
\end{gather*}
Therefore, for all $\beta\in[0;\underline{\gamma}[$, and $t\in(0; T)$:
\begin{gather*}
     \lVert \partial_t\mathbf{u}_1(t,\cdot)-\partial_t \mathbf{u}_2(t,\cdot)\lVert_{\mathcal{X}^{\beta}_q(\Omega)}\leq Ct^{-\beta}\Big(\lVert u_{0,1}-u_{0,2}\lVert_{\mathcal{X}^{1}_q}+\lVert \varphi_{1} -\varphi_{2} \Vert_{\mathcal{U}_{ad}}\Big),
\end{gather*}
\end{enumerate}
 which completes the proof.
\subsection{Proof of Proposition \ref{principe de maximum}}\label{preuve principe maximum} 
Let $u_0 \in\mW^{r,q}(\Omega)$ ($r\in\{0,1\}$, $q>1$) such that $0\leq u_{0}\leq c_{s}$ and $\varphi\in\mathcal{U}_{ad}$.\\
We consider $\rho_{i}$ are non-integer values for $i=1,2$. The proof is the same when $\rho_{i} \in \NN$.
\begin{enumerate}
 \item First, we prove that if $u_{0}\ge 0,$ $a.e.$ in $\Omega,$ then $\mathbf{u}(t,\cdot)\ge 0,$ for all $t\in (0; T)$ and $a.e.$ in $\Omega.$ We have $\mathbf{u}(t,\cdot)\in\mW^{1,q}(\Omega)$ is the classical solution of \eqref{Eqstate} on $(0; T)$ corresponding to the control $\varphi\in\mathcal{U}_{ad}.$ From \cite{GTru1983} for instance, we have $\mathbf{u}^{-}\in\mW^{1,q}(\Omega),$ and thus $\mathbf{u}^{-}|\mathbf{u}^{-}|^{q-2}\in\mW^{1,q^{*}}(\Omega).$\\
Multiplying the first equation of \eqref{Eqstate} by $v=-\mathbf{u}^{-}|\mathbf{u}^{-}|^{q-2}$, and integrating by part over $\Omega,$ we obtain ($\forall t\in(0; T )$):
\begin{gather*}
\begin{split}
   \frac{1}{q}\frac{\d  }{\d  t}\lVert \mathbf{u}^{-}(t,\cdot)\rVert^{q}_{\mL^{q}(\Omega)}&+(q-1)\int_{\Omega}\mD(t,\x)|\nabla{\mathbf{u}^{-}}(t,\x)|^{2}|\mathbf{u}^{-}(t,\x)|^{q-2}\d\x+\alpha_{0}\int_{\Omega}\varphi(t,\x) |\mathbf{u}^{-}(t,\x)|^{q}\d\x \\
   &+\int_{\Omega}\mK_{1}(t,\x)|\mathbf{u}(t,\x)|^{\rho_{1}-1}|\mathbf{u}^{-}(t,\x)|^{q}\d\x+\int_{\Gamma}\mK_{3}(t,\x)|\mathbf{u}(t,\x)|^{\rho_{2}-1}|\mathbf{u}^{-}(t,\x)|^{q}\d\x\\
   &=\int_{\Omega}\mK_{2}(t,\x)|\mathbf{u}^{-}(t,\x)|^{q}\d\x.
   \end{split}
\end{gather*}
Since all the terms on the left-hand side are positive, we can deduce that
\begin{gather*}
\frac{1}{q}\frac{\d}{\d t}\lVert \mathbf{u}^{-}(t,\cdot)\rVert^{q}_{\mL^{q}(\Omega)}\leq \overline{\mK_{2}}\int_{\Omega}|\mathbf{u}^{-}(t,\x)|^{q}\d\x.
\end{gather*}
According to Grönwall Lemma, for all time $t\in(0; T)$ 
\begin{gather*}
 \lVert \mathbf{u}^{-}\rVert^{q}_{\mL^{q}(\Omega)} (t)\leq \e^{q\overline{\mK_{2}}T}\lVert \mathbf{u}^{-}_{0}\rVert^{q}_{\mL^{q}(\Omega)}.
\end{gather*} 
Using the assumption $u_{0}\ge 0,$ we have  $u^{-}_{0}=0.$ Then, we can deduce $\mathbf{u}^{-}=0$, and that $\mathbf{u}(t,\cdot)\ge 0$ for all $t\in (0; T )$ and $a.e.$ in $\Omega.$
\item Next, we  prove that for all $t \in (0; T ),$ $ \mathbf{u}(t,\cdot)\leq c_{s}$ $a.e.$ in $\Omega.$  Let $\Omega^{+}:=\{\x\in\Omega:\hspace{0.1cm} u-c_{s}>0\}.$ By multiplying the first equation of \eqref{Eqstate} by $w=-(\mathbf{u}-c_{s})^{+}|(\mathbf{u}-c_{s})^{+}|^{q-2}$ and then integrating by part over all $\Omega,$ we obtain $a.e.$ in $(0; T )$
\begin{gather*}
\begin{split}
   \frac{1}{q}&\frac{\d}{\d t}\lVert (\mathbf{u}-c_{s})^{+}\rVert^{q}_{\mL^{q}(\Omega)}+(q-1)\!\int_{\Omega}\!\!\mD(t,\x)|\nabla{(\mathbf{u}-c_{s})^{+}}|^{2}|(\mathbf{u}-c_{s})^{+}|^{q-2}\d\x+\!\alpha_{0}\!\!\int_{\Omega}\!\!\varphi(t,\x) |(\mathbf{u}-c_{s})^{+}|^{q}\d\x\\
   & +c_{s}\int_{\Gamma}\mK_{3}(t,\x)|\mathbf{u}|^{\rho_{2}-1}(\mathbf{u}-c_{s})^{+}|(\mathbf{u}-c_{s})^{+}|^{q-2}\d\x
  +\int_{\Omega}\mK_{1}(t,\x)|\mathbf{u}|^{\rho_{1}-1}|(\mathbf{u}-c_{s})^{+}|^{q}\d\x\\ 
&+\alpha_{0}c_{s}\int_{\Omega}\varphi(t,\x)(\mathbf{u}-c_{s})^{+}|(\mathbf{u}-c_{s})^{+}|^{q-2}\d\x+\int_{\Gamma}\mK_{3}(t,\x)|\mathbf{u}|^{\rho_{2}-1}|(\mathbf{u}-c_{s})^{+}|^{q}\d\x\\
  & =\int_{\Omega}\!\!\mK_{2}(t,\x)|(\mathbf{u}-c_{s})^{+}|^{q}\d\x
   \!+\!c_{s}\!\!\int_{\Omega}\!\!\mK_{2}(t,\x)|(\mathbf{u}-c_{s})^{+}|^{q-1}\d\x
   \!-\!c_{s}\!\!\int_{\Omega}\!\!\mK_{1}(t,\x)|\mathbf{u}|^{\rho_{1}-1}|(\mathbf{u}-c_{s})^{+}|^{q-1}\d\x
     \end{split}
\end{gather*}
Since all the terms in the left-hand side are positive, we deduce that
\begin{gather*}
\begin{split}
 \frac{1}{q}\frac{\d}{\d t}\lVert (\mathbf{u}-c_{s})^{+}\rVert^{q}_{\mL^{q}(\Omega)} &\!\leq\! \overline{\mK_{2}}\!\!\int_{\Omega}\!\!|(\mathbf{u}-c_{s})^{+}|^{q}\d\x\!+\!c_{s}\!\!\int_{\Omega}\!\!\overline{\mK_{2}}|(\mathbf{u}-c_{s})^{+}|^{q-1}\d\x\!-\!c_{s}\!\!\int_{\Omega}\!\!\underline{\mK_{1}}|\mathbf{u}|^{\rho_{1}-1}|(\mathbf{u}-c_{s})^{+}|^{q-1}\\
 &\leq \overline{\mK_{2}}\int_{\Omega}|(\mathbf{u}-c_{s})^{+}|^{q}\d\x+c_{s}\int_{\Omega^{+}}\bigg(\overline{\mK_{2}}-\underline{\mK_{1}}c^{\rho_{1}-1}_{s}\bigg)|(\mathbf{u}-c_{s})^{+}|^{q-1}\d\x.
\end{split}
\end{gather*}
For $\overline{\mK_{2}}-\underline{\mK_{1}}c^{\rho_{1}-1}_{s}\leq 0,$ $i.e.,$ $c_{s}=\bigg{(}\frac{\overline{\mK_{2}}}{{\underline{\mK_{1}}}}\bigg{)}^{\frac{1}{\rho_{1}-1}},$ we can deduce that
\begin{gather*}
\frac{1}{q}\frac{\d}{\d t}\lVert (\mathbf{u}-c_{s})^{+}\rVert^{q}_{\mL^{q}(\Omega)}\leq \overline{\mK_{2}}\int_{\Omega}|(\mathbf{u}-c_{s})^{+}|^{q}\d\x,
\end{gather*}
and using Gronwall's Lemma again, we have ($\forall t\in(0; T )$): 
\begin{gather*}
 \lVert (\mathbf{u}-c_{s})^{+}\rVert^{q}_{\mL^{q}(\Omega)} (t)\leq \e^{q\overline{\mK_{2}}T}\lVert (u_{0}-c_{s})^{+}\rVert^{q}_{\mL^{q}(\Omega)}.
\end{gather*}
Since $u_{0}\leq c_{s},$ then $(u_{0}-c_{s})^{+}=0$.
Consequently, $(\mathbf{u} - c_{s})^{+} = 0$, which implies $\mathbf{u}(t, \cdot) \leq c_{s}$  for all $t\in (0; T )$ and $a.e.$ in $\Omega.$
\end{enumerate}
This ends the proof.




\bibliographystyle{cas-model2-names}

%
\bibliography{biblio_cas_refs}

\end{document}